\numberwithin{equation}{section}
\theoremstyle{plain}
\newtheorem{theorem}{Theorem}[section]
\newtheorem{prop}[theorem]{Proposition}
\newtheorem{fact}[theorem]{Fact}
\newtheorem{lemma}[theorem]{Lemma}
\newtheorem{cor}[theorem]{Corollary}
\newtheorem{claim}{Claim}
\theoremstyle{definition}
\newtheorem{defn}[theorem]{Definition}
\newtheorem{remark}[theorem]{Remark}
\newtheorem{problem}[theorem]{Problem}
\newtheorem{expl}[theorem]{Example}
\newtheorem{conj}[theorem]{Conjecture}
\newcommand{\aff}{\operatorname{aff}}
\newcommand{\VC}{\operatorname{VC}}
\newcommand{\vc}{\operatorname{vc}}
\newcommand{\Conv}{\operatorname{Conv}}
\newcommand{\conv}{\operatorname{conv}}
\newcommand{\id}{\operatorname{id}}
\newcommand{\ch}{\operatorname{char}}
\title{Combinatorial properties of non-archimedean convex sets}
\author{Artem Chernikov}
\address{Department of Mathematics, University of California, Los Angeles, Los Angeles, CA 90095, USA}
\email{chernikov@math.ucla.edu}
\author{Alex Mennen}
\address{Department of Mathematics, University of California, Los Angeles, Los Angeles, CA 90095, USA}
\email{alexmennen@math.ucla.edu}
\begin{document}
\begin{abstract}
	We study combinatorial properties of convex sets over arbitrary valued fields. We demonstrate analogs of some classical results for convex sets over the reals (e.g.~the fractional Helly theorem and B\'ar\'any's theorem on points in many simplices), along with some additional properties not satisfied by  convex sets over the reals, including finite breadth and VC-dimension. These results are deduced from a simple combinatorial description of modules over the valuation ring in a spherically complete valued field.
\end{abstract}

\maketitle
\section{Introduction}

Convexity in the context of non-archimedean valued fields was introduced in a series of papers by Monna in 1940's \cite{Monna}, and has been extensively studied since then in non-archimedean functional analysis (see e.g.~the monographs \cite{perez2010locally, schneider2013nonarchimedean} on the subject). Convexity here is defined analogously to the real case, with the role of the unit interval played instead by a valuational unit ball (see Definition \ref{def: convex set}).
Convex subsets of $\mathbb{R}^d$ admit  rich combinatorial structure, including many classical results around the theorems of Helly, Radon, Carath\'eodory, Tverberg, etc. --- we refer to e.g.~\cite{de2019discrete} for a recent survey of the subject. 
In the case of $\mathbb{R}$, or more generally a real closed field, there is  a remarkable parallel between the combinatorial properties of convex and semi-algebraic sets (which correspond to definable sets from the point of view of model theory). They share many (but not all) properties in the form of various restrictions on the possible intersection patterns, including the fractional Helly theorem and existence of (weak) $\varepsilon$-nets.
A well-studied phenomenon in model theory establishes strong parallels between definable sets in $\mathbb{R}$ and in many non-archimedean valued fields such as the $p$-adics $\mathbb{Q}_p$ or various fields of power series (see e.g. \cite{van2014lectures}). 
In this paper we focus on the  combinatorial study of convex sets over general valued fields, trying to understand if there is similarly a parallel theory.
On the one hand, we demonstrate valued field analogs of some classical results for convex sets over the reals (e.g.~the fractional Helly theorem and B\'ar\'any's theorem on points in many simplices). On the other, we establish some additional properties not satisfied by  convex sets over the reals, including finite breadth and VC-dimension. This suggests that in a  sense convex sets over valued fields are the best of both worlds combinatorially, and satisfy various properties enjoyed either by convex or by semialgebraic sets over the reals.

  We give a quick outline of the paper. Section \ref{sec: basic props of conv sets}
 covers some basics concerning convexity for subsets of $K^d$ over an arbitrary valued field $K$, in particular discussing the connection to modules over the valuation ring. These results are mostly standard (or small variations of standard results), 
and can be found e.g.~in \cite{perez2010locally, schneider2013nonarchimedean}  under
the unnecessary assumption that $K$ is spherically complete and $\left(\Gamma,+\right)\subseteq\left(\mathbb{R}_{>0}, \times \right)$; we provide some proofs for completeness.
In Section \ref{sec: O-modules classif} we give a simple combinatorial description of the submodules of $K^d$ over the valuation ring $\mathcal{O}_K$ in the case of a spherically complete field $K$ (Theorem \ref{thm: main Omod pres} and Corollary \ref{cor: descr of conv sets in sph compl}), and an analog for finitely generated modules over arbitrary valued fields (Corollary \ref{cor: class of fg modules}). We also give an example of a convex set over the field of Puiseux series demonstrating that the assumption of spherical completeness is necessary for our presentation in the non-finitely generated case (Example \ref{ex: sph comple necc}). In Section \ref{sec: combinatorics of conv sets} we use this description of modules to deduce various combinatorial properties of the family of convex subsets $\Conv_{K^d}$ of $K^d$ over an arbitrary valued field $K$. First we show that $\Conv_{K^d}$ has breadth $d$ (Theorem \ref{thm: breadth d}), VC-dimension $d+1$ (Theorem \ref{thm: VC dim}), dual VC-dimension $d$ (Theorem \ref{thm: dual VC-dim})  --- in stark contrast, all of these are infinite for the family of convex subsets of $\mathbb{R}^d$ for $d \geq 2$.
On the other hand, we obtain valued field analogs of the following classical results: the family $\Conv_{K^d}$ has Helly number $d+1$ (Theorem \ref{thm: Helly number}), fractional Helly number $d+1$ (Theorem \ref{thm: fractional Helly number}), satisfies a strong form of Tverberg's theorem (Theorem \ref{thm: Tverberg}) and Boros-F\"uredi/B\'ar\'any theorem on the existence of a common point in a positive fraction of all geometric simplices generated by an arbitrary finite set of points in $K^d$ (Theorem \ref{thm: first selection lemma}). Some of the proofs here are adaptations of the classical arguments, and some rely crucially on the finite breadth property specific to the valued field context. Finally, in Section \ref{sec: final quest and dirs} we point out some further applications, e.g.~a valued field analogue of the celebrated $(p,q)$-theorem of Alon and Kleitman \cite{alon1992piercing} (Corollary \ref{cor: pq theorem}), and that all convex sets over a spherically complete field are externally definable in the sense of model theory (Remark \ref{rem: conv ext def}); as well as pose some questions and conjectures. We also discuss some other notions of convexity over non-archimedean fields appearing in the literature in Section \ref{sec: other convexities}, and place our work in the context of the study of abstract convexity spaces in discrete geometry and combinatorics  in Section \ref{sec: convexity spaces}.

\subsection*{Acknowledgements}
We thank the referees for many very helpful literature pointers  and  suggestions on improving the paper. In particular, Sections \ref{sec: other convexities} and \ref{sec: convexity spaces} were added following their suggestions. 
We thank Lou van den Dries for pointing out Monna's work to us, Dave Marker for pointing out Example \ref{ex: naming convex over reals}, and Matthias Aschenbrenner for a helpful conversation. Both authors were partially supported by the NSF CAREER grant DMS-1651321, and Chernikov was additionally supported by a Simons fellowship. 

\section{Preliminaries on convexity over valued fields}\label{sec: basic props of conv sets}
\noindent \textbf{Notation.} For $n \in \mathbb{N}_{\geq 1}$, we write $[n] = \{1, \ldots, n \}$ and $\langle \rangle$ denotes the span in vector spaces. Throughout the paper, $K$ will denote a valued field, with value group
$\Gamma = \Gamma_K$, and valuation $\nu = \nu_K:K\rightarrow \Gamma_{\infty} :=\Gamma\sqcup\left\{ \infty\right\} $,
valuation ring $\mathcal{O}= \mathcal{O}_K = \nu^{-1}\left(\left[0,\infty\right]\right)$,
maximal ideal $\mathfrak{m} = \mathfrak{m}_K =\nu^{-1}\left(\left(0,\infty\right]\right)$,
and residue field\footnote{Also commonly referred to as the ``residue class field'' in the literature.} $k=\mathcal{O}/\mathfrak{m}$. The residue map $\mathcal{O}\rightarrow k$
will be denoted $\alpha\mapsto\bar{\alpha}$. For a ring $R$, $R^{\times}$ denotes its group of units.

The following definition of convexity is analogous to the usual one over $\mathbb{R}$, with the unit interval replaced by the (valuational) unit ball.

\begin{defn}\label{def: convex set}
\begin{enumerate}
	\item For $d \in \mathbb{N}_{\geq 1}$, a set $X \subseteq K^{d}$ is  \emph{convex} if, for any
$n \in \mathbb{N}_{\geq 1}$, $x_{1}, \ldots ,x_{n}\in X$, and $\alpha_{1}, \ldots, \alpha_{n}\in \mathcal{O}$
such that $\alpha_{1}+ \ldots +\alpha_{n}=1$ we have $\alpha_{1}x_{1}+ \ldots +\alpha_{n}x_{n}\in X$ (in the vector space $K^d$).

\item The family of convex subsets of $K^{d}$ will be denoted $\Conv_{K^{d}}$.
\end{enumerate}
\end{defn}
\noindent It is immediate from the definition that the intersection of any collection of convex subsets of $K^d$ is convex. 
\begin{defn}
	Given an arbitrary set $X \subseteq K^d$, its \emph{convex hull} $\conv(X)$ is the convex set given by the intersection of all convex sets containing $X$, equivalently 
$$\conv(X) = \left\{ \sum_{i=1}^{n} \alpha_i x_i :  n \in \mathbb{N}, \alpha_i \in \mathcal{O}, x_i \in X, \sum_{i=1}^{n} \alpha_i = 1\right\}.$$
\end{defn}

\begin{defn}
	A (valuational) \emph{quasi-ball} is a set 
	$B =\left\{ x \in K : \nu(x-c) \in \Delta \right\}$
	 for some $c \in K$ and an upwards closed subset $\Delta$ of $\Gamma_{\infty}$. In this case we say that $B$ is \emph{around} $c$, and refer to $\Delta$ as the quasi-radius of $B$. We say that $B$ is a \emph{closed} (respectively, \emph{open}) \emph{ball} if additionally $\Delta = \{ \gamma \in \Gamma : \gamma \geq r \}$ (respectively, $\Delta = \{ \gamma \in \Gamma : \gamma > r \}$) for some $r \in \Gamma$, and just \emph{ball} if $B$ is either an open or a closed ball (in which case we refer to $r$ as its \emph{radius}).
	 \end{defn}
\begin{remark}\label{rem: any point is center in a ball}
\begin{enumerate}
	\item If the value group $\Gamma$ is Dedekind complete, then every quasi-ball is a ball (except for $K$ itself, which is a quasi-ball of quasi-radius $\Gamma_\infty$).
	\item  Note also that if $B$ is a quasi-ball of quasi-radius $\Delta$ around $c$ and $c' \in B$ is arbitrary, then $B$ is also a quasi-ball of quasi-radius $\Delta$ around $c'$.
	\item In particular, any two quasi-balls are either disjoint, or one of them contains the other.
\end{enumerate}
\end{remark}

\begin{expl}\label{ex: some conv sets}
	\begin{enumerate}
	\item The convex subsets of $K = K^1$ are exactly $\emptyset$ and the quasi-balls (see Proposition \ref{prop: conv iff subm} and Example \ref{ex: submodules of K are quasiballs}).
	\item If $e_1, \ldots, e_d$ is the standard basis of the vector space $K^d$, then 
		$$\conv \left(\left\{0, e_1, \ldots, e_d \right\} \right) = \mathcal{O}^d.$$
		\item The image and the preimage of a convex set under an affine map are  convex. In particular, a translate of a convex set is convex, and a projection of a convex set is convex. (Recall that given two vector spaces $V, W$ over the same field $K$, a map $f: V \to W$ is \emph{affine} if $f(\alpha x + \beta y) = \alpha f(x) + \beta f(y)$ for all $x,y \in V, \alpha, \beta \in K, \alpha + \beta =1$.) 	\end{enumerate}
\end{expl}

One might expect, by analogy with real convexity, that the definition
of a convex set could be simplified to: if $x,y\in X$, $\alpha,\beta\in \mathcal{O}$
such that $\alpha+\beta=1$, then $\alpha x+\beta y\in X$. The following two propositions show that this is
the case if and only if the residue field is not isomorphic to $\mathbb{F}_{2}$, and that in general we have to require closure under $3$-element convex combinations.

\begin{prop}\label{prop: 3 enough for conv} Let $K$ be a valued field and $X \subseteq K^d$.
	 If $X$ is closed under 3-element convex combinations
(in the sense that if $x,y,z\in X$ and $\alpha,\beta,\gamma\in \mathcal{O}$
such that $\alpha+\beta+\gamma=1$, then $\alpha x+\beta y+\gamma z\in X$),
then $X$ is convex.
\end{prop}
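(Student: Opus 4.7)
My plan is to prove by strong induction on $n \geq 1$ that $X$ is closed under $n$-element convex combinations with coefficients in $\mathcal{O}$ summing to $1$. The base cases $n \leq 3$ are easy: $n = 1$ is immediate, for $n = 2$ one inserts a dummy third point (any $z \in X$, which exists since otherwise $X = \emptyset$ and the proposition is vacuous) with coefficient $0$ and invokes the $3$-closure hypothesis, and $n = 3$ is exactly that hypothesis.

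For the inductive step, given $\sum_{i=1}^n \alpha_i x_i$ with $n \geq 4$, $\alpha_i \in \mathcal{O}$, and $\sum_i \alpha_i = 1$, the idea is to ``contract'' a subset $S \subseteq [n]$ of size $2$ or $3$ into a single term. If $\beta := \sum_{i \in S} \alpha_i$ happens to lie in $\mathcal{O}^\times$, then the coefficients $\alpha_i/\beta$ for $i \in S$ lie in $\mathcal{O}$ and sum to $1$, so by one of the base cases $y := \sum_{i \in S} (\alpha_i/\beta) x_i \in X$; then the original sum equals $\beta y + \sum_{i \notin S} \alpha_i x_i$, an $(n - |S| + 1)$-element convex combination with coefficients in $\mathcal{O}$ summing to $1$, hence in $X$ by the inductive hypothesis.

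It therefore remains to find some $S$ of size $2$ or $3$ with $\sum_{i \in S} \alpha_i \in \mathcal{O}^\times$, equivalently $\sum_{i \in S} \bar{\alpha}_i \neq 0$ in the residue field $k$. I would first try $|S| = 2$: if no pair satisfies $\bar{\alpha}_i + \bar{\alpha}_j \neq 0$, then $\bar{\alpha}_j = -\bar{\alpha}_i$ whenever $i \neq j$, and a short argument comparing triples of indices forces all residues to coincide with a common $c \in k$ satisfying $2c = 0$ and $nc = \sum_i \bar{\alpha}_i = 1$. In residue characteristic $\neq 2$ this yields $c = 0$, contradicting $nc = 1$; so this obstruction can only arise when $\ch(k) = 2$. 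But then for any three distinct indices $i, j, \ell$ we have $\bar{\alpha}_i + \bar{\alpha}_j + \bar{\alpha}_\ell = 3c = c \neq 0$, so the triple contraction $|S| = 3$ succeeds, reducing to an $(n-2)$-element convex combination.

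The main obstacle is precisely this residue-characteristic-$2$ case, where every pair sum vanishes modulo $\mathfrak{m}$ simultaneously; it is handled by switching to a triple contraction, and this is also what explains why the hypothesis of closure under $3$-element (rather than merely $2$-element) convex combinations is necessary in the statement, matching the $\mathbb{F}_2$ issue alluded to in the paragraph preceding the proposition.
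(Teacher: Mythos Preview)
Your proof is correct and follows the same inductive contraction strategy as the paper: reduce an $n$-term convex combination to a shorter one by collapsing a sub-sum whose total coefficient is a unit in $\mathcal{O}$. The only difference is in how you locate that unit sub-sum. You test all pairs, and if every pair sum lies in $\mathfrak{m}$ you run a residue-field argument (forcing $\ch(k)=2$ and all $\bar{\alpha}_i$ equal to a common nonzero $c$) to conclude that any triple sum equals $3c=c\neq 0$ and hence is a unit. The paper instead fixes the single pair $\{1,2\}$: if $\alpha_1+\alpha_2\in\mathfrak{m}$, then since $\sum_i\alpha_i=1$ some $\alpha_i$ with $i\geq 3$ must already be a unit, and then $\alpha_1+\alpha_2+\alpha_i\in\mathcal{O}^\times$ immediately---no residue-characteristic case split is needed. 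Your route has the minor expository benefit of making the $\mathbb{F}_2$ obstruction visible, but the paper's version is shorter and more direct.
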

\begin{proof}
	Suppose $X$ is closed under 3-element convex combinations.
We will show by induction on $n$ that then $X$ is closed under $n$-element
convex combinations. Let $n \geq 3$, $x_{1},\ldots,x_{n}\in X$ and $\alpha_{1}, \ldots ,\alpha_{n}\in \mathcal{O}$
such that $\alpha_{1}+ \ldots +\alpha_{n}=1$ be given. Then one of the following
two cases holds.

\begin{enumerate}
	\item [Case 1:] $\alpha_{1}+\alpha_{2}\in \mathcal{O}^{\times}$. 
	
\noindent Then $\frac{\alpha_{1}}{\alpha_{1}+\alpha_{2}}$
and $\frac{\alpha_{2}}{\alpha_{1}+\alpha_{2}}$ are elements of $\mathcal{O}$
that sum to $1$, so $$\frac{\alpha_{1}}{\alpha_{1}+\alpha_{2}}x_{1}+\frac{\alpha_{2}}{\alpha_{1}+\alpha_{2}}x_{2}\in X$$ by assumption. But then 
$$\alpha_{1}x_{1}+\ldots+\alpha_{n}x_{n}=\left(\alpha_{1}+\alpha_{2}\right)\left(\frac{\alpha_{1}}{\alpha_{1}+\alpha_{2}}x_{1}+\frac{\alpha_{2}}{\alpha_{1}+\alpha_{2}}x_{2}\right)+\alpha_{3}x_{3}+ \ldots +\alpha_{n}x_{n}\in X$$
by the induction hypothesis, as it is a convex combination
of $n-1$ elements of $X$.

\item [Case 2:]$\alpha_{1}+\alpha_{2}\in\mathfrak{m}$.

\noindent Then, as $\nu \left( \sum_{i=1}^{n} \alpha_i \right) = 0$, there must exist some $i$ with
$3 \leq i \leq n$ such that $\alpha_{i}\in \mathcal{O}^{\times}$. Hence $\alpha_{1}+\alpha_{2}+\alpha_{i}\in \mathcal{O}^{\times}$,
so $\frac{\alpha_{1}}{\alpha_{1}+\alpha_{2}+\alpha_{i}}$, $\frac{\alpha_{2}}{\alpha_{1}+\alpha_{2}+\alpha_{i}}$,
and $\frac{\alpha_{i}}{\alpha_{1}+\alpha_{2}+\alpha_{i}}$ are elements
of $\mathcal{O}$ that sum to $1$. Thus 
$$\left(\frac{\alpha_{1}}{\alpha_{1}+\alpha_{2}+\alpha_{i}} \right)x_{1}+\left(\frac{\alpha_{2}}{\alpha_{1}+\alpha_{2}+\alpha_{i}} \right)x_{2}+\left(\frac{\alpha_{i}}{\alpha_{1}+\alpha_{2}+\alpha_{i}} \right)x_{i}\in X$$
by assumption, and so
\begin{gather*}
	\alpha_{1}x_{1}+ \ldots +\alpha_{n}x_{n}= \\
	\left(\alpha_{1}+\alpha_{2}+\alpha_{i}\right)\left(\frac{\alpha_{1}}{\alpha_{1}+\alpha_{2}+\alpha_{i}}x_{1}+\frac{\alpha_{2}}{\alpha_{1}+\alpha_{2}+\alpha_{i}}x_{2}+\frac{\alpha_{i}}{\alpha_{1}+\alpha_{2}+\alpha_{i}}x_{i}\right) \\
	+\alpha_{3}x_{3} + \ldots + \alpha_{i-1}x_{i-1}+\alpha_{i+1}x_{i+1}+ \ldots +\alpha_{n}x_{n}\in X
\end{gather*}
by the induction hypothesis, as it is a convex combination
of $n-2$ elements of $X$. \qedhere
\end{enumerate}
\end{proof}

\begin{prop}
For any valued field $K$, the following are equivalent:

\begin{enumerate}
	\item for every $d \geq 1$, every set in $K^d$ that is closed under 2-element convex combinations
is convex;
\item the residue field $k$ is not isomorphic to $\mathbb{F}_{2}$.
\end{enumerate}
\end{prop}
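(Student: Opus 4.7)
The plan is to prove $(2) \Rightarrow (1)$ by an algebraic manipulation reducing 3-element combinations to 2-element combinations, and $(1) \Rightarrow (2)$ by exhibiting an explicit counterexample in $K^2$; the main conceptual challenge is identifying the right set for the counterexample.

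For $(2) \Rightarrow (1)$, since $k \not\cong \mathbb{F}_2$, I pick $\lambda \in \mathcal{O}$ with $\bar\lambda \notin \{0, 1\}$, so that both $\lambda$ and $1 - \lambda$ lie in $\mathcal{O}^\times$. By Proposition~\ref{prop: 3 enough for conv}, it suffices to show that a set $X \subseteq K^d$ closed under 2-combinations is closed under 3-combinations. Given $x, y, z \in X$ and $\alpha, \beta, \gamma \in \mathcal{O}$ summing to $1$, I would set
\[ p := \left(1 - \tfrac{\beta}{\lambda}\right) x + \tfrac{\beta}{\lambda} y, \qquad q := \left(1 - \tfrac{\gamma}{1-\lambda}\right) x + \tfrac{\gamma}{1-\lambda} z. \]
The units $\lambda$ and $1-\lambda$ keep all four coefficients in $\mathcal{O}$, so $p, q \in X$ by 2-closure; a direct computation then yields $\lambda p + (1-\lambda) q = \alpha x + \beta y + \gamma z$, which therefore lies in $X$ as one further 2-combination.

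For $(1) \Rightarrow (2)$, I would argue the contrapositive by producing a 2-closed non-convex subset of $K^2$ when $k \cong \mathbb{F}_2$. Let
\[ X := \{(a,b) \in \mathcal{O}^2 : a \in \mathfrak{m} \text{ or } b \in \mathfrak{m}\}. \]
To verify 2-closure, given $u, v \in X$ and $\alpha \in \mathcal{O}$, note that $\bar\alpha \in \mathbb{F}_2$ forces either $\alpha \in \mathfrak{m}$ or $1-\alpha \in \mathfrak{m}$; after swapping $u \leftrightarrow v$ (and replacing $\alpha$ by $1-\alpha$) if necessary, we may assume $\alpha \in \mathfrak{m}$. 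A short case split on which coordinate of $v$ lies in $\mathfrak{m}$ then shows that the corresponding coordinate of $\alpha u + (1-\alpha) v$ remains in $\mathfrak{m}$, placing the combination in $X$. For non-convexity: since $k \cong \mathbb{F}_2$ gives $3 \in 1 + \mathfrak{m} \subseteq \mathcal{O}^\times$, the scalar $\tfrac{1}{3} \in \mathcal{O}$ is defined, and while the three corners $(0,0), (1,0), (0,1)$ all lie in $X$, their centroid $\bigl(\tfrac{1}{3}, \tfrac{1}{3}\bigr)$ has both coordinates in $\mathcal{O}^\times$ and hence lies outside $X$.

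The residue-field condition enters in $(2) \Rightarrow (1)$ exactly through the existence of $\lambda$ with both $\lambda, 1-\lambda \in \mathcal{O}^\times$, which fails precisely when $k \cong \mathbb{F}_2$. The counterexample in $(1) \Rightarrow (2)$ is guided by the dual observation: when $k = \mathbb{F}_2$, any 2-combination $\alpha u + (1-\alpha) v$ is ``mostly $u$'' or ``mostly $v$'' modulo $\mathfrak{m}$, so a condition preserved under $\mathfrak{m}$-perturbations of any single coordinate automatically survives 2-combinations, and ``at least one coordinate lies in $\mathfrak{m}$'' is arguably the simplest such condition that fails at the centroid of the standard simplex.
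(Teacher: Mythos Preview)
Your proof is correct, and for $(2)\Rightarrow(1)$ it is actually cleaner than the paper's. The paper proceeds by a case split on whether one of $\alpha+\beta$, $\beta+\gamma$, $\alpha+\gamma$ lies in $\mathcal{O}^\times$, and only invokes an element $\delta$ with $\bar\delta\notin\{0,1\}$ in the residual case $\bar\alpha=\bar\beta=\bar\gamma=1$ in characteristic~$2$; you instead fix such a $\lambda$ once and for all and use the identity $\lambda p+(1-\lambda)q=\alpha x+\beta y+\gamma z$ uniformly, avoiding any cases. For $(1)\Rightarrow(2)$ the two arguments are essentially the same: the paper writes down the three-coordinate version $\{(a_1,a_2,a_3)\in\mathcal{O}^3:\exists i\ a_i\in\mathfrak{m}\}$ and remarks parenthetically that it can be made to work in $K^2$, while you give that $K^2$ version directly. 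The paper witnesses non-convexity with the combination $(-1)\cdot 0+1\cdot(1,0,0)+1\cdot(0,1,1)=(1,1,1)$, whereas you use the centroid $\bigl(\tfrac13,\tfrac13\bigr)$; your choice is perfectly fine since $k\cong\mathbb{F}_2$ forces $\operatorname{char}K\in\{0,2\}$, so $3\in 1+\mathfrak{m}\subseteq\mathcal{O}^\times$ in either case (when $\operatorname{char}K=2$ this just reads $3=1$), and hence $\tfrac13\in\mathcal{O}^\times$.
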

\begin{proof}
\emph{(1) implies (2).}	If $k=\mathbb{F}_{2}$, consider the set
	$$X:=\left\{ \left(a_{1},a_{2},a_{3}\right)\mid a_{1},a_{2},a_{3}\in \mathcal{O},\,\exists i\,a_{i}\in\mathfrak{m}\right\} \subseteq K^{3}.$$
We claim that $X$ is closed under $2$-element convex combinations. That is, given arbitrary  $\left(a_{1},a_{2},a_{3}\right), \left(b_{1},b_{2},b_{3}\right)\in X$ 
and $\alpha,\beta\in \mathcal{O}$ with $\alpha+\beta=1$, we must show
that $\alpha\left(a_{1},a_{2},a_{3}\right)+\beta\left(b_{1},b_{2},b_{3}\right)\in X$. We have
$\bar{\alpha}+\bar{\beta}=1$ in $k = \mathbb{F}_2$, so necessarily one of $\bar{\alpha}$ and $\bar{\beta}$
is $1$ and the other is $0$. Without loss of generality $\bar{\alpha}=1$ and $\bar{\beta}=0$.
Then $\beta\in\mathfrak{m}$. By definition of $X$, $a_{i}\in\mathfrak{m}$ for some $i$.
Then $\alpha a_{i}\in\mathfrak{m}$, and $\beta b_{i}\in\mathfrak{m}$ as $b_i \in \mathcal{O}$,
so $\alpha a_{i}+\beta b_{i}\in\mathfrak{m}$. Thus $\left(\alpha a_{1}+\beta b_{1},\alpha a_{2}+\beta b_{2},\alpha a_{3}+\beta b_{3}\right)\in X$. However $X$ is not convex: for an arbitrary $a \in \mathfrak{m}$ we have $(0,0,0), (1,0,0), (0,1,1) \in  X$, $1,-1 \in \mathcal{O}$, but $(-1)(0,0,0) + 1(1,0,0) + 1(0,1,1) = (1,1,1) \notin X$. (This example can be modified to work in $K^{2}$.)

\noindent\emph{(2) implies (1).} If $k\not\cong\mathbb{F}_{2}$, suppose $X$ is closed under $2$-element
convex combinations. By Proposition \ref{prop: 3 enough for conv}, we only need to check that it is then closed under $3$-element
convex combinations. Let $x,y,z\in X$, and $\alpha,\beta,\gamma\in \mathcal{O}$
such that $\alpha+\beta+\gamma=1$. Then one of the following two
cases holds.
\begin{enumerate}
	\item [Case 1:]
At least one of $\alpha+\beta, \beta+\gamma, \alpha+\gamma$ is an element of $\mathcal{O}^{\times}$.

\noindent Without loss of generality, $\alpha+\beta\in \mathcal{O}^{\times}$. Then
$\frac{\alpha}{\alpha+\beta}x+\frac{\beta}{\alpha+\beta}y\in X$ by assumption,
and thus 
$$\alpha x+\beta y+\gamma z=\left(\alpha+\beta\right)\left(\frac{\alpha}{\alpha+\beta}x+\frac{\beta}{\alpha+\beta}y\right)+\gamma z\in X.$$
\item [Case 2:] $\alpha+\beta,\beta+\gamma,\alpha+\gamma\in\mathfrak{m}$.

\noindent In the residue field, $\bar{\alpha}+\bar{\beta}=\bar{\beta}+\bar{\gamma}=\bar{\alpha}+\bar{\gamma}=0$,
and $\bar{\alpha}+\bar{\beta}+\bar{\gamma}=1$, hence necessarily $\bar{\alpha}=\bar{\beta}=\bar{\gamma}=1$,
and $\ch \left(k\right)=2$. Since $k\not\cong\mathbb{F}_{2}$,
there is $\delta\in \mathcal{O}$ such that $\bar{\delta}\notin\left\{ 0,1\right\} $. Then 
$\bar{\alpha}+\bar{\delta}=1+\bar{\delta}\neq0$ and $\bar{\beta}-\bar{\delta}+\bar{\gamma}=\bar{\delta}\neq0$,
so 
\begin{gather*}
	\alpha x+\beta y+\gamma z=\\
	\left(\alpha+\delta\right)\left(\frac{\alpha}{\alpha+\delta}x+\frac{\delta}{\alpha+\delta}y\right)+\left(\beta-\delta+\gamma\right)\left(\frac{\beta-\delta}{\beta-\delta+\gamma}y+\frac{\gamma}{\beta-\delta+\gamma}z\right)\in X.
\end{gather*}
\end{enumerate}
\end{proof}

The following proposition gives a very strong form of Radon's theorem (not only we obtain a partition into two sets with intersecting convex hulls, but moreover one of the points is in the convex hull of the other ones).
\begin{prop}\label{prop: Radon}
	Let $K$ be a valued field. For any $d+2$ points $x_{1}, \ldots ,x_{d+2}\in K^{d}$,
one of them is in the convex hull of the others.
\end{prop}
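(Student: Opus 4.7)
The plan is to exploit the fact that $d+2$ points in $K^d$ are always affinely dependent, and then use the ultrametric structure to turn one of the coefficients of the affine dependence into a unit of $\OO$, which over a valued field (unlike over $\mathbb{R}$) lets us directly solve for that point as a convex combination of the others.

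First I would fix an affine dependence: since $\dim_K(K^d) = d$, there exist $\alpha_1, \ldots, \alpha_{d+2} \in K$, not all zero, such that $\sum_{i=1}^{d+2} \alpha_i = 0$ and $\sum_{i=1}^{d+2} \alpha_i x_i = 0$. Let $\gamma = \min_i \nu(\alpha_i) \in \Gamma$ (finite since the $\alpha_i$ are not all zero), pick some $\alpha_{i_0}$ achieving this minimum, and replace each $\alpha_i$ by $\alpha_i / \alpha_{i_0}$. This preserves both the affine sum relation and the linear sum relation, and now all $\alpha_i \in \OO$ with at least one (namely $\alpha_{i_0}$) equal to $1$, hence in $\OO^\times$.

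Next I would observe that in fact at least two of the $\alpha_i$'s must lie in $\OO^\times$. Indeed, suppose only $\alpha_{i_0}$ were a unit and all others were in $\mathfrak{m}$; then $\sum_i \alpha_i = \alpha_{i_0} + \sum_{i \neq i_0} \alpha_i$ would be a unit plus an element of $\mathfrak{m}$, hence a unit, contradicting $\sum \alpha_i = 0$. So fix some index $j$ with $\alpha_j \in \OO^\times$. Setting $\beta_i := -\alpha_i/\alpha_j$ for $i \neq j$, each $\beta_i \in \OO$ (since $\alpha_j$ is a unit and $\alpha_i \in \OO$), and
\[
\sum_{i \neq j} \beta_i \;=\; -\frac{1}{\alpha_j} \sum_{i \neq j} \alpha_i \;=\; -\frac{1}{\alpha_j} \cdot (-\alpha_j) \;=\; 1,
\]
while $x_j = \sum_{i \neq j} \beta_i x_i$. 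This exhibits $x_j$ as a convex combination of $\{x_i : i \neq j\}$, so $x_j \in \conv(\{x_i : i \neq j\})$.

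There is not really a hard step here; the content is the normalization trick in the second paragraph. The contrast with the real case is that over $\mathbb{R}$ the coefficients in an affine dependence need not all have the same sign, so one must partition into positive and negative parts (yielding only Radon's theorem in its classical form), whereas over a valued field we can always rescale so that one coefficient is a unit and absorb it, getting the stronger conclusion that a single one of the points already lies in the convex hull of the remaining $d+1$.
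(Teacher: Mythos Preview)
Your proof is correct and follows essentially the same approach as the paper: fix an affine dependence, identify a coefficient of minimal valuation, and solve for the corresponding point as a convex combination of the others. The paper does this in one step by dividing through by $a_i$ with $\nu(a_i)$ minimal; your normalization followed by division by a unit $\alpha_j$ amounts to the same thing. Note that your paragraph arguing that \emph{at least two} of the $\alpha_i$ are units is unnecessary: after normalizing you already have $\alpha_{i_0}=1\in\mathcal{O}^\times$, so you may simply take $j=i_0$ and skip that discussion.
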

\begin{proof}
	There exist $a_{1}, \ldots, a_{d+2}\in K$, \emph{not all $0$}, such that
$\sum_{i=1}^{d+2}a_{i}x_{i}= 0$  and $\sum_{i=1}^{d+2}a_{i}=0$ (because those are $d+1$ linear equations on $d+2$ variables, as we are working in $K^d$).
Let $i\in\left[d+2\right]$ be such that $\nu\left(a_{i}\right)$
is minimal among $\nu(a_1), \ldots, \nu(a_{d+2})$, in particular $a_i \neq 0$. Then $x_{i}=\sum_{j\neq i}\frac{-a_{j}}{a_{i}}x_{j}$,
and this is a convex combination: for $i \neq j$ we have $\frac{-a_{j}}{a_i}  \in \mathcal{O}$ (as $\nu \left(\frac{-a_{j}}{a_i} \right) = \nu(a_j) - \nu(a_i) \geq 0$ by the choice of $i$) and  $\sum_{j \neq i}\frac{-a_j}{a_i} = \frac{-\sum_{j \neq i} a_j}{a_i} = \frac{a_i}{a_i} = 1$. 
\end{proof}

\noindent By a repeated application of Proposition \ref{prop: Radon} we immediately get a very strong form of Carath\'eodory's theorem:
\begin{cor}\label{cor: Carath}
	Let $K$ be a valued field. Then the convex hull of any finite set in $K^d$ is already given by the convex hull of at most $d+1$ points from it.
\end{cor}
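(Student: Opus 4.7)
The plan is a straightforward induction on the cardinality of the finite set, using Proposition \ref{prop: Radon} at each step to eliminate a redundant point. Let $S \subseteq K^d$ be finite of size $n$. If $n \leq d+1$, there is nothing to do and I simply take $T = S$. Otherwise $n \geq d+2$, and I would select any $d+2$ points from $S$ and apply Proposition \ref{prop: Radon} to those points, obtaining some $x$ among them that lies in the convex hull of the remaining $d+1$. In particular $x \in \conv(S \setminus \{x\})$.

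From $x \in \conv(S \setminus \{x\})$ I get $S \subseteq \conv(S \setminus \{x\})$, and since $\conv(S \setminus \{x\})$ is itself convex and $\conv(S)$ is the smallest convex set containing $S$, this forces $\conv(S) = \conv(S \setminus \{x\})$. The set $S \setminus \{x\}$ has cardinality $n-1$, so by the induction hypothesis there is $T \subseteq S \setminus \{x\} \subseteq S$ with $|T| \leq d+1$ and $\conv(T) = \conv(S \setminus \{x\}) = \conv(S)$, completing the induction.

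I do not anticipate a real obstacle here: the argument is essentially the classical elimination proof of Carath\'eodory's theorem. The one notable point is that the conclusion is actually stronger than over $\mathbb{R}$, since a single $T$ of size at most $d+1$ works for the whole convex hull rather than depending on the chosen point; this uniformity comes for free from the stronger form of Radon's theorem in Proposition \ref{prop: Radon}, which asserts that one of the $d+2$ points is itself a convex combination of the others (not merely that two disjoint subsets have intersecting convex hulls), so that each inductive step genuinely removes a point without altering the convex hull.
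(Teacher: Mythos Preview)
Your proof is correct and matches the paper's own argument, which simply states that the result follows ``by a repeated application of Proposition~\ref{prop: Radon}.'' Your write-up makes this explicit via induction on $|S|$, eliminating one redundant point at each step.
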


Convex sets over valued fields have a natural algebraic characterization. 

\begin{prop}\label{prop: conv iff subm}
\begin{enumerate}
\item A subset $C \subseteq K^{d}$ is an $\mathcal{O}$-submodule
of $K^{d}$ if and only if it is convex and contains $0$.

	\item 	Nonempty convex subsets of $K^{d}$ are precisely the  translates
of $\mathcal{O}$-submodules of $K^d$.
\end{enumerate}
\end{prop}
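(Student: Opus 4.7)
The plan is to prove (1) first, then deduce (2) by translation.

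For (1), the forward direction is routine: if $C$ is an $\mathcal{O}$-submodule then $0 \in C$, and given any $x_1,\ldots,x_n \in C$ and $\alpha_1,\ldots,\alpha_n \in \mathcal{O}$ with $\sum \alpha_i = 1$, each $\alpha_i x_i$ lies in $C$ by closure under scalar multiplication from $\mathcal{O}$, and the sum lies in $C$ by closure under addition, so $C$ is convex.

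For the reverse direction, suppose $C$ is convex and $0 \in C$. To verify closure under scalar multiplication by $\mathcal{O}$, given $x \in C$ and $\alpha \in \mathcal{O}$, I would write
\[
\alpha x = \alpha \cdot x + (1-\alpha) \cdot 0,
\]
noting that $1-\alpha \in \mathcal{O}$ and $\alpha + (1-\alpha) = 1$, so this is a $2$-element convex combination of elements of $C$ and hence lies in $C$. To verify closure under addition, given $x,y \in C$, I would write
\[
x + y = 1 \cdot x + 1 \cdot y + (-1) \cdot 0,
\]
noting that $1, -1 \in \mathcal{O}$ and $1 + 1 + (-1) = 1$, so this is a $3$-element convex combination of elements of $C$ and again lies in $C$ by Definition \ref{def: convex set}. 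Thus $C$ is an $\mathcal{O}$-submodule.

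For (2), I would use (1) together with the fact that translates of convex sets are convex (Example \ref{ex: some conv sets}(3)). If $C$ is a nonempty convex set, pick any $c \in C$; then $C - c$ is convex and contains $0$, so by (1) it is an $\mathcal{O}$-submodule $M$, and $C = M + c$ is a translate of an $\mathcal{O}$-submodule. Conversely, an $\mathcal{O}$-submodule is convex by (1), and any translate of it remains convex. There is no real obstacle here: the only subtle point is remembering to use a $3$-element convex combination (rather than a $2$-element one) to recover closure under addition, since otherwise an expression like $x + y = 2 \cdot (\tfrac{1}{2}x + \tfrac{1}{2}y)$ would fail because $\tfrac{1}{2}$ need not lie in $\mathcal{O}$.
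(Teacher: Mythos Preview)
Your proof is correct and matches the paper's argument essentially line for line: the paper also uses $\alpha x = \alpha x + (1-\alpha)\cdot 0$ for scalar closure and $x+y = 1\cdot x + 1\cdot y + (-1)\cdot 0$ for additive closure, then deduces (2) by translating to put $0$ in $C$. Your added remark about why a $2$-element combination would not suffice is a nice touch but not in the paper.
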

\begin{proof}
	(1) First, $\mathcal{O}$-submodules of $K^d$ are clearly convex and contain $0$.
Now suppose $C\subseteq K^{d}$ is convex and $0 \in  C$.
Then for any $\alpha\in \mathcal{O}$ and $x\in C$, $\alpha x=\alpha x+\left(1-\alpha\right) 0\in C$.
And for any $x,y\in C$, $x+y=1\cdot x+1\cdot y-1\cdot 0\in C$. Therefore
$C$ is an $\mathcal{O}$-submodule.
(2) Given a non-empty convex $C \subseteq K^d$, we can choose $a \in K^d$ such that the translate $C + a$ contains $0$, and it is still convex, hence $C+a$ is an $\mathcal{O}$-submodule of $K^d$ by (1).
\end{proof}

\begin{expl}\label{ex: submodules of K are quasiballs}
	Let $C$ be an $\mathcal{O}$-submodule of $K$, and take $\Delta := \nu(C)$. Then $\Delta$ is non-empty because it contains $\infty=\nu(0)$, and upward-closed because for $\gamma\in\Delta$ and $\delta>\gamma$, there is $x\in C$ with $\nu(x)=\gamma$, and $\alpha\in K$ with $\nu(\alpha)=\delta-\gamma$; then $\alpha x\in C$ and $\nu(\alpha x)=\delta$. Clearly $C\subseteq\{x\in K\mid\nu(x)\in\Delta\}$ by definition of $\Delta$. To show $C\supseteq\{x\in K\mid\nu(x)\in\Delta\}$, given any $x\in K$ with $\nu(x)\in\Delta$, there is $y \neq 0\in C$ with $\nu(y)=\nu(x)$, and $\frac{x}{y}\in \mathcal{O}$, so $x=\frac{x}{y}y\in C$. Thus $C=\{x\in K\mid\nu(x)\in\Delta\}$ is a quasi-ball around $0$.	
\end{expl}

\begin{cor}
	The convex hull of any finite set in $K^{d}$ is the
image of $\mathcal{O}^{d}$ under an affine map.
\end{cor}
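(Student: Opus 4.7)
The plan is to combine the Carath\'eodory-type reduction in Corollary \ref{cor: Carath} with the explicit computation of $\conv(\{0, e_1, \ldots, e_d\})$ in Example \ref{ex: some conv sets}(2) and the fact that affine maps send convex sets to convex sets (Example \ref{ex: some conv sets}(3)).

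First, I would invoke Corollary \ref{cor: Carath} to reduce to the case of a set of at most $d+1$ points, say $\{x_0, x_1, \ldots, x_m\}$ with $m \leq d$; if $m < d$, I would pad the list by repeating $x_0$ so that without loss of generality we have exactly $d+1$ points $x_0, \ldots, x_d \in K^d$. Then I would define the affine map $f : K^d \to K^d$ by
$$f(t_1, \ldots, t_d) = x_0 + \sum_{i=1}^{d} t_i (x_i - x_0),$$
so that $f(0) = x_0$ and $f(e_i) = x_i$ for $1 \leq i \leq d$.

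For the equality $f(\mathcal{O}^d) = \conv(\{x_0, \ldots, x_d\})$, I would argue by double inclusion. For $\subseteq$: by Example \ref{ex: some conv sets}(2), $\mathcal{O}^d = \conv(\{0, e_1, \ldots, e_d\})$, and by Example \ref{ex: some conv sets}(3) the image $f(\mathcal{O}^d)$ is convex; since it contains each $x_i = f(e_i)$ and $x_0 = f(0)$, it contains their convex hull. For $\supseteq$: a direct computation shows that for $\alpha_0, \ldots, \alpha_d \in \mathcal{O}$ with $\sum_{i=0}^d \alpha_i = 1$,
$$f(\alpha_1, \ldots, \alpha_d) = x_0\Bigl(1 - \sum_{i=1}^d \alpha_i\Bigr) + \sum_{i=1}^d \alpha_i x_i = \sum_{i=0}^d \alpha_i x_i,$$
so every convex combination of the $x_i$'s lies in $f(\mathcal{O}^d)$.

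There is no substantive obstacle here: the argument is essentially a bookkeeping chase once Carath\'eodory is in hand. The only mild subtlety is the possibility that the $x_i$ fail to be affinely independent, but since both the source and target of $f$ are $K^d$ and no rank hypothesis on $f$ is needed, the degenerate case is handled uniformly by the same formula.
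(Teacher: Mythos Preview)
Your proof is correct and follows essentially the same approach as the paper: reduce to $d+1$ points via Corollary~\ref{cor: Carath}, define the same affine map $f$ sending $0 \mapsto x_0$ and $e_i \mapsto x_i$, and identify $f(\mathcal{O}^d)$ with $\conv(\{x_0,\ldots,x_d\})$ using Example~\ref{ex: some conv sets}(2). The paper is slightly terser (it asserts $\conv(\{x_0,\ldots,x_d\}) = f(\conv\{0,e_1,\ldots,e_d\})$ directly rather than arguing by double inclusion), but the content is the same.
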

\begin{proof}
	By Corollary \ref{cor: Carath}, the convex hull of a finite subset of $K^d$ is the convex
hull of some $d+1$ points $x_{0}, \ldots, x_{d}$ from it (possibly with  $x_i = x_j$ for some $i,j$). Let $e_1, \ldots, e_d$ be the standard basis for $K^d$, and let $f$ be an  affine map $f:K^{d}\rightarrow K^{d}$
such that $f(0)=x_{0}$ and $f\left(e_{i}\right)=x_{i}$
for $1\leq i\leq d$ (can take $f$ to be the composition of two affine maps: the linear map sending $e_i$ to $x_{i} - x_0$ for $1 \leq i \leq d$, and translation by $x_0$). Then we have $\conv\left(\left\{ x_{0}, \ldots, x_{d}\right\} \right)=f\left(\conv\left\{ 0,e_{1}, \ldots,e_{d}\right\} \right)=f\left(\mathcal{O}^{d}\right)$ (by Example \ref{ex: some conv sets}(2)).
\end{proof}

\begin{prop}
	For any convex $C \subseteq K^{d}$ and $a\in K^{d}$,
the translate $C+a:=\left\{ x+a \mid x\in C\right\} $ is either equal
to or disjoint from $C$.
\end{prop}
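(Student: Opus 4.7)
The plan is to reduce to the case of $\mathcal{O}$-submodules via Proposition \ref{prop: conv iff subm}(2), which identifies nonempty convex sets with translates of $\mathcal{O}$-submodules of $K^d$.

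First I would dispense with the trivial case $C = \emptyset$, for which $C + a = \emptyset = C$. Assuming $C$ is nonempty, I would pick $b \in K^d$ such that $M := C - b$ is an $\mathcal{O}$-submodule of $K^d$ (Proposition \ref{prop: conv iff subm}(2)), so that $C = M + b$ and $C + a = M + (a + b)$.

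The key observation is the following dichotomy for $a \in K^d$ and the submodule $M$: either $a \in M$, in which case $M + a = M$ (since $M$ is closed under addition and negation), giving $C + a = M + a + b = M + b = C$; or $a \notin M$, in which case $(C + a) \cap C = \emptyset$. Indeed, if $x \in (C + a) \cap C$, then $x = m_1 + a + b = m_2 + b$ for some $m_1, m_2 \in M$, hence $a = m_2 - m_1 \in M$, a contradiction.

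There is no serious obstacle here; the statement is essentially the familiar fact that cosets of a subgroup (here, the additive subgroup underlying the $\mathcal{O}$-submodule $M$) partition the ambient group, once one has the submodule presentation from Proposition \ref{prop: conv iff subm}.
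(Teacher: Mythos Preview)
Your proof is correct, but it takes a different route from the paper's. The paper argues directly with convex combinations: assuming $x \in C \cap (C+a)$ (so both $x$ and $x-a$ lie in $C$), for any $y \in C$ one writes $y+a = y + x - (x-a)$ as a $3$-term convex combination with coefficients $1,1,-1 \in \mathcal{O}$ summing to $1$, hence $y+a \in C$; and conversely $y = (y+a) - x + (x-a)$ shows the reverse inclusion. Your argument instead invokes Proposition~\ref{prop: conv iff subm}(2) to write $C$ as a coset $M+b$ of an $\mathcal{O}$-submodule and then uses the standard fact that distinct cosets of a subgroup are disjoint. Your approach has the advantage of making transparent \emph{why} the statement holds (it is just the coset partition), at the cost of relying on the earlier structural proposition; the paper's proof is self-contained and works directly from the definition of convexity, which is perhaps why the authors preferred it at this point in the exposition.
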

\begin{proof}
	If $x\in C\cap\left(C+a\right)$, then $\forall y\in C$ $y+a=y+x-\left(x-a\right)\in C$,
since that is a convex combination, and conversely, if $y+a\in C$
then $y=\left(y+a\right)-x+\left(x-a\right)\in C$. 
\end{proof}

\begin{defn}
	
Given a valued field $K$, by a \emph{valued $K$-vector space} we mean a $K$-vector space $V$ equipped with a surjective map $\nu = \nu_V: V \to \Gamma_{\infty} = \Gamma \cup \{\infty\}$ such that $\nu(x) = \infty$ if and only if $x = 0$, $\nu(x+y) \geq \min\{ \nu(x), \nu(y) \}$ and  $\nu(\alpha x) = \nu_K(\alpha) + \nu(x)$ for all $x,y \in V$ and $\alpha \in K$.
\end{defn}
\begin{remark}
Here we restrict to the case when $V$ has the same value group as $K$, and refer to \cite{fuchs1975vector} for a more general treatment (see also \cite[Section 6.1.3]{johnson2016fun}, \cite[Section 2.5]{hrushovski2014imaginaries} or \cite[Section 2.3]{aschenbrenner2017asymptotic}).\end{remark}
\noindent By a morphism of valued $K$-vector spaces we mean a morphism of vector spaces preserving valuation. If $V$ and $W$ are valued $K$-vector spaces, their direct sum $V \oplus W$ is the direct sum of the underlying vector spaces equipped with the valuation $\nu(x,y) := \min \{ \nu_V(x), \nu_{W}(y)\}$.
In particular, the vector space $K^{d}$ is a valued $K$-vector space with respect to the valuation $\nu_{K^d}:K^{d}\rightarrow\Gamma_{\infty}$
given by 
$$\nu_{K^d}\left(x_{1}, \ldots ,x_{d}\right) := \min\left\{ \nu_K\left(x_{1}\right), \ldots , \nu_K \left(x_{d}\right)\right\}.$$
Note that for any scalar $\alpha \in K$ and vector $v \in K^d$ we have $\nu_{K^d}(\alpha v) = \nu_K(\alpha) + \nu_{K^d}(v)$.
  By a \emph{(valuational) ball} in $K^d$ we mean a set of the form $\{ x \in K^{d} : \nu_{K^d}(x - c) \square r \}$ for some center $c \in K^d$, radius $r \in \Gamma \cup \{\infty\}$ and $\square \in \{ >, \geq \}$ (corresponding to open or closed ball, respectively). The collection of all open balls forms a basis for the \emph{valuation topology} on $K^d$ turning it into a topological vector space. Note that due to the ``ultra-metric'' property of valuations, every open ball is also a closed ball, and vice versa. Equivalently, this topology on $K^d$ is just the product topology induced from the valuation topology on $K$.
   
Recall that the \emph{affine span $\aff(X)$} of a set  $X \subseteq K^d$ is the intersection of all affine sets (i.e.~translates of vector subspaces of $K^d$) containing $X$, equivalently 
$$\aff(X) = \left\{ \sum_{i=1}^{n} \alpha_i x_i :  n \in \mathbb{N}_{\geq 1}, \alpha_i \in K, x_i \in X, \sum_{i=1}^{n} \alpha_i = 1\right\}.$$
We have $\conv(X) \subseteq \aff(X)$ for any $X$.

\begin{prop}
	Any convex set in $K^{d}$ is open in its affine span.
\end{prop}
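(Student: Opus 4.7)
The plan is to reduce to the case where the convex set is an $\mathcal{O}$-submodule of $K^d$ and then exhibit a concrete open neighborhood of $0$ in its affine span that is contained in it; the key underlying input is that $K$-linear isomorphisms of finite-dimensional $K$-vector spaces are homeomorphisms for the valuation topology, with no completeness hypothesis on $K$ required.

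Let $C \subseteq K^d$ be convex. The empty case is vacuous (since then $\aff(C) = \emptyset$), so I assume $C$ is nonempty. Translating by $-c_0$ for some $c_0 \in C$ (a homeomorphism of $K^d$ preserving both openness and affine spans), by Proposition \ref{prop: conv iff subm} I may assume that $C$ is an $\mathcal{O}$-submodule $M$ containing $0$. Since $0 \in M$, a short check shows $\aff(M) = \langle M \rangle_K$; call this $K$-subspace $V$.

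Next I pick $m_1, \ldots, m_k \in M$ forming a $K$-basis of $V$ (possible since $M$ spans $V$ over $K$), set $N := \mathcal{O} m_1 + \ldots + \mathcal{O} m_k \subseteq M$, and extend $m_1,\ldots,m_k$ to a basis $m_1, \ldots, m_d$ of $K^d$. The $K$-linear map $\phi : K^d \to K^d$ sending $e_i \mapsto m_i$ is a bijection; both $\phi$ and $\phi^{-1}$ are given by $d \times d$ matrices over $K$, so each is continuous in the valuation topology by the direct ultrametric estimate
$$\nu_{K^d}(\phi(x)) \;\geq\; \nu_{K^d}(x) + \min_{i,j} \nu_K(\phi_{ij}),$$
and similarly for $\phi^{-1}$. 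Thus $\phi$ is a homeomorphism of $K^d$. By restriction it carries the subspace $K^k \times \{0\}^{d-k}$ homeomorphically onto $V$, and maps the open set $\mathcal{O}^k \times \{0\}^{d-k}$ onto $N$. Hence $N$ is open in $V$.

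Finally, $M \supseteq N$ is a neighborhood of $0$ in $V$, and since $M$ is a subgroup of $(V,+)$ and translations by elements of $V$ are homeomorphisms, $M$ is a neighborhood of every one of its points in $V$; that is, $M$ is open in $V$. Undoing the initial translation proves the statement for the original $C$. The only point requiring attention is the continuity of $\phi^{-1}$, handled uniformly by the matrix estimate above, so the argument goes through over an arbitrary valued field $K$.
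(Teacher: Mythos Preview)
Your proof is correct and follows essentially the same approach as the paper: both exhibit a $K$-linear (or affine) isomorphism from $K^{d'}$ onto the affine span that carries $\mathcal{O}^{d'}$ into $C$, using the fact that such maps are homeomorphisms for the valuation topology. The only cosmetic difference is that the paper works directly at an arbitrary point $x\in C$ using affinely independent $x,y_1,\ldots,y_{d'}\in C$, whereas you first translate to make $C$ an $\mathcal{O}$-module, show it is a neighborhood of $0$, and then invoke the subgroup trick to conclude openness at every point.
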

\begin{proof}
	For $x\in C\subseteq K^{d}$, $C$ convex, let $d'\leq d$
be the dimension of the affine span of $C$, and let $y_{1},\ldots,y_{d'}\in C$ be 
such that $x,y_{1},\ldots,y_{d'}$ are affinely independent, and thus
have the same affine span as $C$. Then the map $\left(\alpha_{1},\ldots,\alpha_{d'}\right)\mapsto x+\alpha_{1}\left(y_{1}-x\right)+\ldots+\alpha_{d'}\left(y_{d'}-x\right)$
is a homeomorphism from $K^{d'}$ to the affine span of $C$, and
sends $\mathcal{O}^{d'}$ (which is open in $K^{d'}$) to a neighborhood
of $x$ contained in $C$. 
\end{proof}

\begin{cor}
Convex sets in $K^{d}$ are closed.
\end{cor}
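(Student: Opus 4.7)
My plan is to combine the previous proposition (which says every convex set is open in its affine span) with the standard topological-group fact that open subgroups are automatically closed. If $C = \emptyset$, we are done; assume $C$ non-empty. Fix $c_0 \in C$. By Proposition \ref{prop: conv iff subm}(1), $C - c_0$ is an $\mathcal{O}$-submodule of $K^d$, and in particular an additive subgroup of the $K$-linear subspace $V_0 := \aff(C) - c_0$ (genuinely linear since $0 \in V_0$). The previous proposition applied to the convex set $C - c_0$ (whose affine span is exactly $V_0$) tells us that $C - c_0$ is open in $V_0$ with the subspace topology inherited from $K^d$.

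The key step is then to invoke the standard observation that any open subgroup $H$ of a topological group $G$ is also closed: its complement is $\bigcup_{g \notin H} gH$, a union of open translates of $H$, hence open. Applied in our situation, this gives that $C - c_0$ is closed in $V_0$, and translating back, $C$ is closed in $\aff(C)$. Finally, $\aff(C)$ is a translate of a finite-dimensional $K$-linear subspace of $K^d$, and any such subspace is closed in $K^d$: in the valuation (equivalently, product) topology, linear functionals $K^d \to K$ are continuous and $K$ is Hausdorff, so a linear subspace is a finite intersection of zero sets of linear functionals. Combining, $C$ is closed in $K^d$. No substantial obstacle is anticipated; the conceptual heart of the argument is recognizing $C$ as an open additive subgroup of its affine span, after which the standard topological-group reasoning takes over.
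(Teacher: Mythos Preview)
Your proof is correct and follows essentially the same route as the paper: both arguments translate so that $0$ lies in the convex set, use the previous proposition to see that the set is open in its affine span, observe that the complement in the affine span is a union of open translates (you phrase this as the standard ``open subgroup implies closed'' fact, the paper spells out one such translate $C+x$ explicitly), and conclude since affine subspaces are closed in $K^d$. If anything, your version is slightly more careful in making the translation to the origin explicit.
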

\begin{proof}
For convex $C\subseteq K^{d}$ and $x\in \aff\left(C\right)\setminus C$,
$C+x$ is an open subset of $\aff\left(C\right)$ that is
disjoint from $C$, so $C$ is a closed subset of its affine span,
and hence closed in $K^{d}$, since affine subspaces are closed. 
\end{proof}

\section{Classification of $\mathcal{O}$-submodules of $K^d$}\label{sec: O-modules classif}

In this section we provide a simple description for the $\mathcal{O}$-submodules of $K^d$ over a spherically complete valued field $K$ (and over an arbitrary valued field $K$ in the finitely generated case). Combined with the description of convex sets in terms of $\mathcal{O}$-submodules from Section \ref{sec: basic props of conv sets}, this will allow us to establish various combinatorial properties of convex sets over valued fields in the next section. In the following lemma, the construction of the valuation $\nu$  is a special case of the standard construction of the quotient norm, when modding out a normed space by a closed subspace, while the second part is more specific to our situation.

\begin{lemma}\label{lem: val pres emb quotient}
Let $K$ be a valued field, and $V\subseteq K^{d}$ a subspace.
Then the quotient vector space $K^d/V$ is a valued  $K$-vector space equipped with the valuation 
$$\nu\left(u\right):=\max\left\{ \nu_{K^d}\left(v\right)\mid\pi\left(v\right)=u, v \in K^d \right\},$$
for $u \in K^d/V$, where $\pi: K^d \rightarrow K^d/V$ is the projection map (and the maximum is taken in $\Gamma_{\infty}$).
If $\dim(V) = n$, then $K^{d}/V\cong K^{d-n}$ as valued $K$-vector spaces, and there is
a valuation preserving embedding of $K$-vector spaces $f: K^{d}/V\hookrightarrow K^{d}$  so that $\pi \circ f = \id_{K^d/V}$.
\end{lemma}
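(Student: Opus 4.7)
The plan is to argue by induction on $n = \dim V$; the base case $n = 1$ contains the only genuine computation, and the inductive step is a formal reduction via the third isomorphism theorem. In the base case we simultaneously construct the splitting $f$ and verify that the ``max'' in the definition of $\nu$ is actually attained, from which the three axioms of a valued $K$-vector space follow automatically by transport along $f$.

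For the base case $V = \langle v \rangle$, write $v = (v_1, \ldots, v_d)$ and choose $i \in [d]$ realizing $\nu_K(v_i) = \nu_{K^d}(v)$; permuting coordinates (a valuation-preserving automorphism of $K^d$) we may assume $i = d$, so $v_d \neq 0$. Let $g : K^{d-1} \to K^d$ be the (trivially valuation-preserving) embedding $g(x_1, \ldots, x_{d-1}) := (x_1, \ldots, x_{d-1}, 0)$. Since $v_d \neq 0$, any representative of a coset can be shifted by a unique scalar multiple of $v$ to zero out its last coordinate, so $\pi \circ g : K^{d-1} \to K^d/V$ is a $K$-linear bijection. The key estimate is
\[
\nu_{K^d}\bigl(g(x) + \alpha v\bigr) \leq \nu_{K^d}(g(x)) \quad \text{for all } x \in K^{d-1}, \ \alpha \in K,
\]
which I would prove by cases on the size of $\nu_K(\alpha) + \nu_K(v_d)$. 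If this quantity is $\leq \nu_{K^d}(g(x))$, then the $d$-th coordinate $\alpha v_d$ of $g(x) + \alpha v$ already witnesses a small enough valuation. Otherwise, for every $j < d$ the minimality of $\nu_K(v_d)$ forces $\nu_K(\alpha v_j) \geq \nu_K(\alpha) + \nu_K(v_d) > \nu_{K^d}(g(x))$, so at any index $j$ realizing $\nu_K(x_j) = \nu_{K^d}(g(x))$ we get $\nu_K(x_j + \alpha v_j) = \nu_K(x_j)$ while all other coordinates still have valuation $\geq \nu_{K^d}(g(x))$, leaving the minimum unchanged. This estimate shows that the supremum in the definition of $\nu(\pi(g(x)))$ is attained at $g(x)$, so $\pi \circ g$ is an isometry of valued $K$-vector spaces and $f := g \circ (\pi \circ g)^{-1}$ is the required valuation-preserving splitting.

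For the inductive step with $\dim V = n \geq 2$, pick any one-dimensional $V_1 \subseteq V$. The base case produces an isomorphism $K^d/V_1 \cong K^{d-1}$ of valued $K$-vector spaces together with a valuation-preserving splitting $f_1 : K^d/V_1 \to K^d$. The image $V/V_1$ is then an $(n-1)$-dimensional subspace of $K^d/V_1 \cong K^{d-1}$, so the inductive hypothesis supplies a valuation on $(K^d/V_1)/(V/V_1)$, an identification with $K^{d-n}$, and a splitting $f_2 : (K^d/V_1)/(V/V_1) \to K^d/V_1$. Using the third isomorphism theorem to identify $K^d/V \cong (K^d/V_1)/(V/V_1)$, the composition $f := f_1 \circ f_2$ is the desired splitting $K^d/V \hookrightarrow K^d$ with $\pi \circ f = \id$, and a short chase shows the iterated valuation coincides with the direct max-formula: for any $w$ with $\pi(w) = u$ we have $\nu_{K^d}(w) \leq \nu_{K^d/V_1}(\pi_1(w)) \leq \nu(u)$, and equality is realized by $f(u)$. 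The only real obstacle is the coordinate-wise base-case estimate above; everything else is bookkeeping.
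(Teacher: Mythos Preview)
Your proof is correct and follows essentially the same architecture as the paper's: induction on $n=\dim V$, with the one real computation in the $n=1$ case handled by singling out a coordinate $i$ on which $\nu_K(v_i)=\nu_{K^d}(v)$ and using the hyperplane $\{x_i=0\}$ as a valuation-preserving complement, then invoking the third isomorphism theorem for the inductive step. The only cosmetic difference is in the base-case inequality: the paper proves $\nu_{K^d}(x+y)\leq\nu_{K^d}(y)$ for $x$ with $x_i=0$ and $y\in V$ (immediate from looking at the $i$th coordinate) and then uses a short ultrametric contradiction to conclude the max is attained at $x$, whereas you prove the ``dual'' estimate $\nu_{K^d}(g(x)+\alpha v)\leq\nu_{K^d}(g(x))$ directly by your two cases, which gives attainment of the max without the extra contradiction step.
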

\begin{proof}
	First we prove the lemma for $n=1$. Let $V\subseteq K^{d}$ be one-dimensional.  There exists 
 $i\in\left[d\right]$ such that $\nu_{K^d}\left(\left(x_{1},\ldots,x_{d}\right)\right)=\nu_K\left(x_{i}\right)$
for all $\left(x_{1},\ldots,x_{d}\right)\in V$ (indeed, if $\nu_K(x_i) = \min \{\nu_K(x_1), \ldots, \nu_K(x_d)\}$ for some $(x_1, \ldots, x_d) \in V$, then we also have $\nu_K(\alpha x_i) = \nu_K(\alpha) + \nu_K (x_i) = \nu_K(\alpha) + \min \{\nu_K(x_1), \ldots, \nu_K(x_d)\} = \min \{\nu_K(\alpha x_1), \ldots, \nu_K( \alpha x_d)\}$ for any $\alpha \in K$). Given any $ \left(x_{1}, \ldots,x_{d}\right)\in K^{d}$
with $x_{i}=0$ and $ \left(y_{1}, \ldots,y_{d}\right)\in V$, we have 
\begin{gather}
	\nu_{K^d}\left(x_{1}+y_{1}, \ldots, x_{d}+y_{d}\right)=\min_{j\in\left[d\right]}\left\{ \nu_K\left(x_{j}+y_{j}\right)\right\} =  \label{eq: MainThmeqNew} \\
	\min\left \{ \nu_K\left(y_{i}\right),\min_{j\neq i}\left\{ \text{\ensuremath{\nu_K\left(x_{j}+y_{j}\right)}}\right\} \right \} \leq\nu_K\left(y_{i}\right)=\nu_{K^d}\left(y_{1}, \ldots, y_{d}\right). \nonumber
\end{gather}
Now consider an arbitrary affine translate $x + V$ of $V$, $x = (x_1, \ldots, x_d) \in K^d$. Then there exists $x' = (x'_1, \ldots, x'_d) \in x + V$ so that $x'_i = 0$. Indeed, fix any  $0 \neq y' \in V$, then $V = \left\{ \alpha y' : \alpha \in K \right\}$. Take $\alpha' := - \frac{x_i}{y'_i}$ (note that, by the choice of $i$, $y' \neq 0 \Rightarrow \nu_{K^d}(y') \neq \infty \Rightarrow \nu_{K}(y'_i) \neq \infty \Rightarrow y'_i \neq 0$), and let $x' := x + \alpha ' y'$.
We claim that $\nu_{K^d} (x') = \max \left\{ \nu_{K^d}(z) :  z \in x + V \right\}$,  in particular the valuation $\nu$ on $K^d/V$ is well-defined. Indeed, $x + V = x' + V$, so fix any $y \in V$. If $\nu_{K^d}(x') < \nu_{K^d}(x' + y) $, we must necessarily have $\nu_{K^d}(x') = \nu_{K^d}(y)$, but by \eqref{eq: MainThmeqNew} we have $\nu_{K^d}(x' + y)  \leq \nu_{K^d}(y) $, so $\nu_{K^d}( y) < \nu_{K^d}(y)  $ --- a contradiction; thus $\nu_{K^d}(x') \geq \nu_{K^d}(x' + y) $.


Let $K' := \left\{ \left(x_{1},\ldots,x_{d}\right)\in K^{d}\mid x_{i}=0\right\} $,  then we have  $K^d = V \oplus K'$ as vector spaces, hence the projection of $K^d$ onto $K'$
along $V$ induces an isomorphism between $K^{d}/V$ and $K'$,
which in turn is naturally isomorphic to $K^{d-1}$, and these isomorphisms preserve the valuation and give the
desired embedding $f: K^{d}/V\rightarrow K^{d}$. 
The general case follows
by induction on $n$ using the vector space isomorphism theorems.
\end{proof}

We recall an appropriate notion of completeness for valued fields. Recall that a family $\{ C_i : i \in I \}$ of subsets of a set $X$ is \emph{nested} if for any $i,j \in I$, either $C_i \subseteq C_j$ or $C_j \subseteq C_i$.
\begin{defn}
	A valued field $K$ is \emph{spherically complete} if every nested family of (closed or open) valuational balls has non-empty intersection.
\end{defn} 

For the following standard fact, see for example \cite[Theorem 5 in Section II.3 + Theorem 8 in section II.6]{schilling1950theory}.
\begin{fact}\label{fac: sph compl exists}
	Every valued field $K$ (with valuation $\nu_K$, value group $\Gamma_K$ and residue field $k_K$) admits a \emph{spherical completion}, i.e.~a valued field $\widetilde{K}$ (with valuation $\nu_{\widetilde{K}}$, value group $\Gamma_{\widetilde{K}}$ and residue field $k_{\widetilde{K}}$) so that:
	\begin{enumerate}
		\item $\widetilde{K}$ is an \emph{immediate extension} of $K$, i.e.~$\widetilde{K}$ is a field extension of $K$, $\nu_{\widetilde{K}}\restriction_{K} = \nu_K$, $\Gamma_{\widetilde{K}} = \Gamma_K$ and $k_{\widetilde{K}} = k_{K}$;
		\item $\widetilde{K}$ is spherically complete.
	\end{enumerate}  
\end{fact}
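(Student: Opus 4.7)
The plan is to construct $\widetilde{K}$ as a maximal immediate extension of $K$ and then argue that maximality forces spherical completeness; this is essentially Kaplansky's theorem on maximal immediate extensions, and the cited reference develops it in detail.

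First I would verify that the collection of immediate extensions of $K$ (inside some fixed ambient algebraic closure of a large enough valued extension, or considered up to $K$-isomorphism) forms a set. Since an immediate extension has the same value group and residue field as $K$, its cardinality is bounded in terms of $|K|$, $|\Gamma_K|$ and $|k_K|$, so this poses no set-theoretic issue. Partially order this set by inclusion. Any chain of immediate extensions has its directed union as an upper bound, and the union is again an immediate extension (valuation, value group and residue field are all preserved in the union). Zorn's lemma then yields a maximal immediate extension $\widetilde{K}$ of $K$.

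Second, I would show $\widetilde{K}$ is spherically complete by contradiction. Suppose $\{B_i : i \in I\}$ is a nested family of valuational balls in $\widetilde{K}$ with empty intersection. Passing to a cofinal well-ordered strictly decreasing subchain $(B_\alpha)_{\alpha < \lambda}$ and choosing $a_\alpha \in B_\alpha$, one obtains a pseudo-Cauchy sequence $(a_\alpha)_{\alpha < \lambda}$ in $\widetilde{K}$ with no pseudo-limit (any pseudo-limit would lie in every $B_\alpha$). By the standard Kaplansky construction one can then build a proper immediate extension of $\widetilde{K}$ containing a pseudo-limit $\theta$ of $(a_\alpha)$: in the \emph{transcendental} case one adjoins $\theta$ as a transcendental element and extends the valuation by $\nu(P(\theta)) := \nu(P(a_\alpha))$ for all sufficiently large $\alpha$; in the \emph{algebraic} case one adjoins a root of the minimal polynomial $P$ associated with the sequence. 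In both cases one checks that the resulting extension is immediate, contradicting maximality of $\widetilde{K}$.

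The main obstacle is the pseudo-limit construction itself, especially the algebraic-type case, where one must verify that adjoining a root of the relevant minimal polynomial $P$ leaves both $\Gamma$ and $k$ unchanged; this relies on Ostrowski's analysis of how valuations behave on polynomials evaluated along pseudo-Cauchy sequences, together with the classification of such sequences into transcendental and algebraic type via the order type of $(\nu(P(a_\alpha)))_\alpha$ for polynomials $P$ over $\widetilde{K}$. Since the authors present this as a classical fact with a specific reference to Schilling's book, I would state this machinery as a black box rather than reconstruct it, treating the proof essentially as a Zorn's-lemma reduction to the existence of pseudo-limit extensions of pseudo-Cauchy sequences with no pseudo-limit.
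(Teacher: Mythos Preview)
The paper does not prove this statement at all: it is stated as a ``Fact'' and simply attributed to Schilling's book \cite[Theorem~5 in Section~II.3 + Theorem~8 in Section~II.6]{schilling1950theory}. Your outline (Zorn's lemma to obtain a maximal immediate extension, then maximality forces spherical completeness via the Kaplansky pseudo-Cauchy machinery) is the standard proof of this classical result and is correct as a sketch; it is more than the paper itself provides.
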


We remark that in general a valued field might have multiple non-isomorphic spherical completions.

\begin{lemma}\label{lem: inters of conv is nonempty}
	If $K$ is spherically complete, then  every nested family of non-empty convex subsets of $K^{d}$  has a non-empty intersection.
\end{lemma}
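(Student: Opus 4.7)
My plan is to induct on $d$, with the $d=1$ base case carrying the spherical-completeness content and the higher-dimensional cases reducing to it via projection and slicing.

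For the inductive step ($d\geq 2$), given a nested family $\{C_i\}_{i\in I}$ of non-empty convex subsets of $K^d$, I would project onto the first coordinate $\pi_1:K^d\to K$. By Example \ref{ex: some conv sets}(3) each $\pi_1(C_i)\subseteq K$ is non-empty and convex, and $\{\pi_1(C_i)\}$ is still nested. The base case yields some $a\in\bigcap_i \pi_1(C_i)$. The slices $C_i':=\{y\in K^{d-1}:(a,y)\in C_i\}$ are non-empty (as $a\in\pi_1(C_i)$), convex (as preimages of $C_i$ under the affine map $y\mapsto(a,y)$) and nested, so the inductive hypothesis supplies $(b_2,\ldots,b_d)\in\bigcap_i C_i'$, whence $(a,b_2,\ldots,b_d)\in\bigcap_i C_i$.

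For the base case $d=1$, non-empty convex subsets of $K$ are precisely the quasi-balls (by Proposition \ref{prop: conv iff subm}(2) and Example \ref{ex: submodules of K are quasiballs}). Given a nested family $\{B_i\}_{i\in I}$ of non-empty quasi-balls, if it has a minimum member we are done. Otherwise I would extract, via transfinite recursion, a cofinal strictly descending well-ordered subchain $\{B_{i_\alpha}\}_{\alpha<\kappa}$ with $B_{i_\alpha}\supsetneq B_{i_\beta}$ for $\alpha<\beta$, which has the same intersection as $\{B_i\}$ by cofinality. Between each successive pair $B_{i_{\alpha+1}}\subsetneq B_{i_\alpha}$ I would sandwich a closed ball $C_\alpha$: picking a common center $c_\alpha\in B_{i_{\alpha+1}}$ (by Remark \ref{rem: any point is center in a ball}(2)), both quasi-balls take the form $\{x:\nu(x-c_\alpha)\in\Delta\}$ for upward-closed $\Delta_\alpha\supsetneq\Delta_{\alpha+1}$ in $\Gamma_\infty$. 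Since $\infty$ lies in both, any $\gamma\in\Delta_\alpha\setminus\Delta_{\alpha+1}$ is in $\Gamma$, and the closed ball $C_\alpha:=\{x:\nu(x-c_\alpha)\geq\gamma\}$ satisfies $B_{i_{\alpha+1}}\subseteq C_\alpha\subseteq B_{i_\alpha}$ by upward-closedness of $\Delta_{\alpha+1}$ (forcing its elements to strictly exceed $\gamma$) and of $\Delta_\alpha$ (absorbing values $\geq\gamma$). The family $\{C_\alpha\}_{\alpha<\kappa}$ is then nested (since $C_\beta\subseteq B_{i_\beta}\subseteq B_{i_{\alpha+1}}\subseteq C_\alpha$ for $\alpha<\beta$), and spherical completeness of $K$ produces a point in $\bigcap_\alpha C_\alpha\subseteq\bigcap_\alpha B_{i_\alpha}=\bigcap_i B_i$.

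The main obstacle is the mismatch between the stated definition of spherical completeness, which concerns only actual closed or open balls of radius in $\Gamma$, and the fact that convex subsets of $K$ are the more general quasi-balls; the sandwich construction above is precisely what bridges this gap. Everything else is routine manipulation of affine maps and upward-closed subsets of $\Gamma_\infty$.
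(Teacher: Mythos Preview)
Your proof is correct and follows essentially the same approach as the paper: induction on $d$, with the inductive step via projection onto the first coordinate and slicing (identical to the paper's), and the $d=1$ base case handled by sandwiching genuine balls inside the quasi-balls and invoking spherical completeness. The only minor variation is in the bookkeeping for $d=1$: you first extract a well-ordered cofinal strictly descending subchain and sandwich a ball between each successive pair, whereas the paper sandwiches a ball $C'_i$ inside each $C_i$ directly (using any strictly smaller $C_{i'}$) and then observes that the resulting family $\{C'_i\}$ is nested because any two of its members contain a common $C_\ell$ and hence, being balls with non-empty intersection, must be comparable---this avoids the transfinite recursion but is otherwise the same idea.
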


\begin{proof}
	By induction on $d$. For $d=1$, 	let $\left\{C_i\right\}_{i\in I}$ be a nested family of nonempty convex sets, so each $C_i$ is a quasi-ball (see Example \ref{ex: some conv sets}(1)). If there exists some $i \in I$ so that $C_i$ is the smallest of these under inclusion then any element of $C_i$ is in the intersection of the whole family. Hence we may assume that for each $i\in I$
	there exists some $i' \in I$ such that $C_{i'}\subsetneq C_i$. Let $\Delta_i$ and $\Delta_{i'}$ be the quasi-radii of $C_i$ and $C_{i'}$, respectively. We may assume that both quasi-balls are around the same point $x_i \in C_{i'}$ (by Remark \ref{rem: any point is center in a ball}), hence necessarily $\Delta_{i'} \subsetneq \Delta_i $. Let $r_i \in \Delta_i \setminus \Delta_{i'}$, and let $C'_i$ be a (open or closed) ball of radius $r_i$ around $x_i$. 	We have $C'_i \subseteq  C_i$, so if $\bigcap_{i\in I}C'_i$ is nonempty, then so is $\bigcap_{i\in I}C_i$. Hence it is sufficient to show that $\left\{C'_i\right\}_{i\in I}$ is nested, and then the intersection is non-empty by spherical completeness of $K$. By construction for any $i,j\in I$ there exists some $\ell\in I$ such that $C_\ell\subseteq C'_i\cap C'_j$, so $C'_i$ and $C'_j$ have non-empty intersection, and are thus nested as they are balls.

	For $d \geq 2$, let $\left\{ C_{i}\right\} _{i\in I}$
be a nested family of nonempty convex sets, and let $\pi_{1}:K^{d}\rightarrow K$
be the projection onto the first coordinate. Then $\left\{ \pi_{1}\left(C_{i}\right)\right\} _{i\in I}$
is a nested family of nonempty convex sets in $K$, hence has an
intersection point $x$. Then $\left\{ \pi_{1}^{-1}\left(x\right)\cap C_{i}\right\} _{i\in I}$
is a nested family of nonempty convex sets in $\pi_{1}^{-1}\left(x\right)\cong K^{d-1}$,
which is nonempty by the induction hypothesis.
\end{proof}

\begin{lemma}\label{lem: interm conv set}
	If $C\subseteq K^{d}$ is an $\mathcal{O}$-module, and $\gamma\in\Gamma_{\infty}$,
then the set
$$X_{\gamma} =\left\{ \left(x_{1}, \ldots,x_{d-1}\right)\in \mathcal{O}^{d-1}\mid\exists\alpha\in K\;\nu\left(\alpha\right)=\gamma,\;\left(\alpha,\alpha x_{1},\ldots,\alpha x_{d-1}\right)\in C\right\} $$
is convex.
\end{lemma}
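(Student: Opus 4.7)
My approach is to verify convexity of $X_\gamma$ directly from the definition, exploiting the $\mathcal{O}$-module structure of $C$ together with the fact that any two witnessing scalars $\alpha$ differ by a unit of $\mathcal{O}$ (since their valuations coincide).

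First, note that $X_\gamma \subseteq \mathcal{O}^{d-1}$ and $\mathcal{O}^{d-1}$ is itself convex, so any convex combination of elements of $X_\gamma$ lies in $\mathcal{O}^{d-1}$. It remains to produce the witnessing scalar $\alpha$. Given $x^{(1)}, \ldots, x^{(n)} \in X_\gamma$ and $\beta_1, \ldots, \beta_n \in \mathcal{O}$ with $\sum_i \beta_i = 1$, pick witnesses $\alpha_1, \ldots, \alpha_n \in K$ with $\nu(\alpha_i) = \gamma$ and $v_i := (\alpha_i, \alpha_i x^{(i)}_1, \ldots, \alpha_i x^{(i)}_{d-1}) \in C$ for every $i$. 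I propose to take $\alpha := \alpha_1$ and consider the $\mathcal{O}$-linear combination
$$ w := \sum_{i=1}^n \beta_i \frac{\alpha_1}{\alpha_i}\, v_i. $$
Since $\nu(\alpha_1/\alpha_i) = \gamma - \gamma = 0$, each coefficient $\beta_i \alpha_1/\alpha_i$ lies in $\mathcal{O}$, so $w \in C$ because $C$ is an $\mathcal{O}$-module.

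Next, a direct computation shows
$$ w = \sum_{i=1}^n \beta_i\bigl(\alpha_1, \alpha_1 x^{(i)}_1, \ldots, \alpha_1 x^{(i)}_{d-1}\bigr) = \Bigl(\alpha_1, \alpha_1\!\sum_i\! \beta_i x^{(i)}_1, \ldots, \alpha_1\!\sum_i\! \beta_i x^{(i)}_{d-1}\Bigr), $$
using $\sum_i \beta_i = 1$ for the first coordinate. Since $\nu(\alpha_1) = \gamma$, this exhibits $\sum_i \beta_i x^{(i)} \in X_\gamma$, proving convexity.

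The argument has no real obstacle: the only thing to notice is the normalization trick of rescaling each witness $v_i$ by the unit $\alpha_1/\alpha_i \in \mathcal{O}^\times$ so that all first coordinates align to a common $\alpha_1$, after which the $\sum_i \beta_i = 1$ identity collapses the combination. (If one prefers, one could invoke Proposition \ref{prop: 3 enough for conv} and only check the case $n = 3$, but the general $n$ case is no harder.) Note the argument is vacuous if no witness $\alpha_i$ exists, but in that case one is simply forming convex combinations of elements all of which do admit witnesses, so this is not an issue.
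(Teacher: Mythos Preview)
Your proof is correct and follows essentially the same approach as the paper's: normalize each witness by the unit $\alpha_1/\alpha_i \in \mathcal{O}$ so that all first coordinates align, then use the $\mathcal{O}$-module structure of $C$ and the identity $\sum_i \beta_i = 1$ to conclude. The only cosmetic difference is that the paper checks the case $n=3$ and then invokes Proposition~\ref{prop: 3 enough for conv}, whereas you handle general $n$ directly (and note this option yourself).
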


\begin{proof}
	Let $x =\left(x_{1},\ldots,x_{d-1}\right),y=\left(y_{1}, \ldots, y_{d-1}\right),z=\left(z_{1}, \ldots ,z_{d-1}\right)\in X_{\gamma}$ and $\beta_1, \beta_2, \beta_3 \in \mathcal{O}$ with $\beta_1 + \beta_2 + \beta_3 = 1$ be arbitrary.
Then there exist some $\alpha_{1},\alpha_{2},\alpha_{3}\in K$ with $\nu\left(\alpha_{i}\right)=\gamma$ so that
$$\left(\alpha_{1},\alpha_{1}x_{1},\ldots,\alpha_{1}x_{d-1}\right), \left(\alpha_{2},\alpha_{2}y_{1}, \ldots ,\alpha_{2}y_{d-1}\right),\left(\alpha_{3},\alpha_{3}z_{1}, \ldots,\alpha_{3}z_{d-1}\right)\in C.$$ 
Taking $\alpha:=\alpha_{1}$, we have 
$$x':=\left(\alpha,\alpha x_{1}, \ldots, \alpha x_{d-1}\right), y' := \left(\alpha,\alpha y_{1}, \ldots, \alpha y_{d-1}\right), z' := \left(\alpha,\alpha z_{1}, \ldots, \alpha z_{d-1}\right)\in C,$$
as for every $i \in [3]$, $\frac{\alpha}{\alpha_{i}}\in \mathcal{O}$, and hence $\frac{\alpha}{\alpha_{i}}v\in C$
for any $v\in C$ as $C$ is an $\mathcal{O}$-module. Using this and convexity  of $C$ we thus have
\begin{gather*}
\Big(\alpha, \alpha( \beta_1 x_1 + \beta_2 y_1 + \beta_3 z_1), \ldots,  \alpha( \beta_1 x_{d-1} + \beta_2 y_{d-1} + \beta_3 z_{d-1})\Big) =\\
	\beta_1\left(\alpha,\alpha x_{1}, \ldots,\alpha x_{d-1}\right)+ \beta_2 \left(\alpha,\alpha y_{1}, \ldots ,\alpha y_{d-1}\right)+\beta_3 \left(\alpha,\alpha z_{1},\ldots,\alpha z_{d-1}\right) =\\
	 \beta_1 x' + \beta_2 y' + \beta_3 z' \in C.
\end{gather*}
This shows that $\beta_1 x + \beta_2 y + \beta_3 z \in X_{\gamma}$, and hence that $X_{\gamma}$ is convex by Proposition \ref{prop: 3 enough for conv}.
\end{proof}

Combining the lemmas, we obtain the following description of the $\mathcal{O}_K$-submodules of $K^d$ for spherically complete $K$.

\begin{theorem}\label{thm: main Omod pres}
	Suppose $K$ is a spherically complete valued field, $d \in \mathbb{N}_{\ge 1}$, and let $C\subseteq K^{d}$
be an $\mathcal{O}$-submodule. Then there exists a complete flag of vector subspaces $\left\{ 0\right\} \subsetneq F_{1}\subsetneq \ldots \subsetneq F_{d}=K^{d}$
and a decreasing sequence of nonempty, upwards-closed subsets $ \Delta_{1}\supseteq\Delta_{2}\supseteq \ldots \supseteq\Delta_{d}$ of $\Gamma_{\infty}$
such that 
$$C=\left\{ v_{1}+ \ldots +v_{d}\mid v_{i}\in F_{i},\;\nu\left(v_{i}\right)\in\Delta_{i}\right\}.$$
\end{theorem}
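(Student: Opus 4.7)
The plan is to induct on $d$. The base case $d = 1$ is immediate from Example \ref{ex: submodules of K are quasiballs}: $C$ is a quasi-ball around $0$, so $C = \{x \in K : \nu(x) \in \Delta\}$ for some upward-closed $\Delta$, and we take $F_1 := K$, $\Delta_1 := \Delta$.

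For the inductive step, the heart of the matter is the following \textbf{main claim}: there exists a 1-dimensional subspace $F_1 \subseteq K^d$ such that $\nu(F_1 \cap C) = \nu(C) =: \Delta_1$. If $\gamma_0 := \inf \nu(C)$ is attained by some $w \in C$, then $F_1 := \langle w \rangle$ works directly, since $\mathcal{O} w \subseteq F_1 \cap C$ has valuation range $[\gamma_0, \infty] = \nu(C)$. Otherwise $\nu(C) = (\gamma_0, \infty]$ with $\gamma_0$ not attained, and spherical completeness becomes essential. For each $\gamma \in (\gamma_0, \infty]$, pick $w_\gamma \in C$ with $\nu(w_\gamma) = \gamma$ and let $i(\gamma) \in [d]$ be a coordinate achieving $\nu((w_\gamma)_{i(\gamma)}) = \gamma$; a pigeonhole argument on the $d$ coordinates produces some $i^*$ such that $\{\gamma : i(\gamma) = i^*\}$ is cofinal in $(\gamma_0, \infty]$, and after permuting coordinates I may assume $i^* = 1$. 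For such $\gamma$, the slope set $X_\gamma$ of Lemma \ref{lem: interm conv set} is nonempty (dividing $w_\gamma$ by its first coordinate gives an element of $X_\gamma$) and convex; moreover $X_\gamma \subseteq X_{\gamma'}$ for $\gamma \leq \gamma'$ (since scaling by an $\mathcal{O}$-element of valuation $\gamma' - \gamma$ carries $X_\gamma$ into $X_{\gamma'}$). By Lemma \ref{lem: inters of conv is nonempty}, this nested family of nonempty convex sets has nonempty intersection, yielding $(x_2, \ldots, x_d)$ such that $F_1 := \langle (1, x_2, \ldots, x_d) \rangle$ contains elements of $C$ of every valuation in $(\gamma_0, \infty]$, proving the claim.

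Granting the main claim, I would reduce to dimension $d-1$ via Lemma \ref{lem: val pres emb quotient}, which provides a valuation-preserving section $f: K^d/F_1 \hookrightarrow K^d$ of the projection $\pi$ and identifies $K^d/F_1 \cong K^{d-1}$ as valued $K$-vector spaces. Let $C' := \pi(C)$; by induction it admits a flag $F'_1 \subsetneq \ldots \subsetneq F'_{d-1}$ and chain $\Delta'_1 \supseteq \ldots \supseteq \Delta'_{d-1}$. I then set $F_i := F_1 + f(F'_{i-1})$ and $\Delta_i := \Delta'_{i-1}$ for $2 \leq i \leq d$; the flag is strict (as the image of $f$ is complementary to $F_1$), $F_d = K^d$, and the inclusion $\Delta_1 \supseteq \Delta_2$ holds because $\nu(C') \subseteq \nu(C)$ (using $\nu(\pi(v)) \geq \nu(v)$ from the quotient valuation together with upward-closedness of $\Delta_1$).

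To verify $C = \{v_1 + \ldots + v_d : v_i \in F_i, \nu(v_i) \in \Delta_i\}$, I would prove both containments. For ``$\supseteq$'' it suffices to show each $v_i$ individually lies in $C$. For $i = 1$ this follows from the main claim, since $F_1 \cap C$ is the quasi-ball in $F_1$ of quasi-radius $\Delta_1$. For $i \geq 2$, $\pi(v_i) \in F'_{i-1}$ has valuation in $\Delta'_{i-1}$ and hence lies in $C'$ by induction, so I pick $c \in C$ with $\pi(c) = \pi(v_i)$; then $v_i - c \in F_1$ has $\nu(v_i - c) \geq \min(\nu(v_i), \nu(c)) \in \Delta_1$ (the min of two elements of an upward-closed set lies in the set), placing $v_i - c$ in $F_1 \cap C$, so $v_i \in C$. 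For ``$\subseteq$'', given $v \in C$, I decompose $\pi(v) = \sum_{j=1}^{d-1} u_j$ by induction, set $v_{j+1} := f(u_j) \in F_{j+1}$ (valuation-preserving by Lemma \ref{lem: val pres emb quotient}), and $v_1 := v - \sum_j f(u_j) \in F_1$; the same ultrametric estimate shows $\nu(v_1) \in \Delta_1$. The main obstacle is squarely the main claim in the non-attained case, where Lemmas \ref{lem: interm conv set} and \ref{lem: inters of conv is nonempty} (spherical completeness) are indispensable.
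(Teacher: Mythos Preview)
Your proof follows the same strategy as the paper's: locate a line $F_1$ with $\nu(F_1\cap C)=\nu(C)$ via the nested convex sets $X_\gamma$ and spherical completeness (Lemmas \ref{lem: interm conv set} and \ref{lem: inters of conv is nonempty}), then descend to $K^d/F_1$ using Lemma \ref{lem: val pres emb quotient} and apply induction. Two small slips worth flagging: you need $\{\gamma:i(\gamma)=i^*\}$ to be \emph{coinitial} in $\Delta_1$ (toward the cut), not cofinal, since the $X_\gamma$ shrink as $\gamma$ decreases; and the case split on whether $\inf\nu(C)$ is attained is both unnecessary and awkward when $\Gamma$ is not Dedekind complete --- the paper handles this uniformly by noting that each set $S_i:=\{\gamma\in\Gamma_\infty:\exists v\in C,\ \nu(v_i)=\nu_{K^d}(v)=\gamma\}$ is upward-closed with $\bigcup_i S_i=\Delta_1$, so some $S_i$ already equals $\Delta_1$.
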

\begin{remark}\label{rem: Delta1}
	If $F_i, \Delta_i$ satisfy the conclusion of Theorem \ref{thm: main Omod pres} for $C$, then $\nu_{K^d}(C \cap F_1) = \nu_{K^d}(C) = \Delta_{1}$.
	
	Indeed, any $v \in C$ is of the form $v=v_1 + \ldots + v_d$ with $v_i \in F_i$, $\nu(v_i) \in \Delta_i$ and $\Delta_1 \supseteq \Delta_i$ for all $i \in [d]$, hence $\nu(v) \geq \min\left\{\nu(v_i) : i \in [d]\right\} \in \Delta_1$, hence $\nu(v) \in \Delta_1$ as $\Delta_1$ is upwards closed, so $\nu(C) \subseteq \Delta_1$. Conversely, assume $\gamma \in \Delta_1$. If $\gamma = \infty$, then $\nu(0) = \infty$ and $0 \in F_1$. So assume $\gamma \in \Gamma$ and let $v$ be any non-zero vector in $F_1$, in particular $\delta := \nu(v) \in \Gamma$. Taking $\alpha \in K$ so that $\nu_K(\alpha) = \gamma - \delta$, we have $\alpha v \in F_1$ and $\nu_{K^d}(\alpha v) = \nu_{K}(\alpha) + \nu_{K^d}(v) = \gamma$. Note also that $\alpha v = v_1 + \ldots + v_d$ with $v_1 := \alpha v, v_i := 0$ for $2 \leq i \leq d$, in particular $v_i \in F_i$ and $\nu(v_i) \in \Delta_i$, so $\alpha v \in  C$, hence $\Delta_1 \subseteq \nu(F_1 \cap C)$.
\end{remark}

\begin{proof}[Proof of Theorem \ref{thm: main Omod pres}]
	By induction on $d$. 
	For $d=1$, every $\mathcal{O}$-submodule of $K$ is a quasi-ball $C = \{ x \in K : \nu(x) \in \Delta \}$ for some upwards-closed $\Delta \subseteq  \Gamma \cup \{ \infty\}$ (see Example \ref{ex: submodules of K are quasiballs}), hence we take $F_1 := K$ and $\Delta_1 := \Delta$.

	For $d>1$, let $\Delta_{1}:=\left\{ \gamma\in\Gamma_{\infty}\mid\exists v\in C \; \nu_{K^d}\left(v\right)=\gamma\right\} $. Note that $\Delta_1$ is nonempty because it contains $\infty=\nu(0)$. 
	Then there is some $i\in\left[d\right]$ such that every $\gamma\in\Delta_{1}$
is the valuation of the $i$th coordinate of some element of $C$. To see this, note that for each $i \in [d]$, the set 
$$S_i := \left\{ \gamma \in \Gamma_{\infty} \mid \exists v =(v_1, \ldots, v_d) \in C \; \nu_{K^d}(v) = \nu(v_i) = \gamma \right\}$$
is upwards closed in $\Gamma_{\infty}$. Indeed, assume $v = (v_1, \ldots, v_d) \in C$, $\gamma = \nu(v_i) = \min\{ \nu(v_j) : j \in [d] \}$ and $\delta \geq \gamma$ in $\Gamma_{\infty}$. Let $\alpha \in K$ be arbitrary with $\nu(\alpha) = \delta - \gamma$, then $\alpha \in \mathcal{O}$, hence $\alpha v \in C$, and so $\nu_{K^d}(\alpha v) = \min \{ \nu(\alpha v_j) : j \in [d] \} = \nu(\alpha v_j) = \delta$.
As we also have $\Delta_1 = \bigcup_{i \in [d]} S_i$, it follows that $\Delta_1 = S_i$ for some $i \in [d]$ as wanted (and in particular $\Delta_1$ is upwards closed in $\Gamma_{\infty}$).

Without loss of generality we may assume $i=1$. Then, given any $\gamma\in\Delta_{1}$, there is some $(\alpha, y_1, \ldots, y_{d-1}) \in C$ such that $\gamma = \nu(\alpha) \leq \min \left\{ \nu(y_j) : j \in [d-1] \right\}$. Taking $x_j := \frac{y_j}{\alpha} \in \mathcal{O}$, we thus have $(\alpha, \alpha x_1, \ldots, \alpha x_{d-1}) \in C$. Hence for any $\gamma\in\Delta_{1}$, the set 
$$X_{\gamma} := \left\{ \left(x_{1}, \ldots ,x_{d-1}\right)\in \mathcal{O}^{d-1}\mid\exists\alpha\in K\;\nu\left(\alpha\right)=\gamma \, \land \, \left(\alpha,\alpha x_{1}, \ldots, \alpha x_{d-1}\right)\in C\right\} $$
is nonempty, and convex (by Lemma \ref{lem: interm conv set}). Note that for $\gamma < \delta \in \Gamma_{\infty}$ we have $X_{\gamma} \subseteq X_{\delta}$, hence $\bigcap_{\gamma \in \Delta_1} X_{\gamma} \neq \emptyset$
by Lemma \ref{lem: inters of conv is nonempty}. That is, there exists $\left(x_{1}, \ldots, x_{d-1}\right)\in \mathcal{O}^{d-1}$
such that $\forall\gamma\in\Delta_{1}\;\exists\alpha\in K\; \left( \nu(\alpha) = \gamma \, \land \, \left(\alpha,\alpha x_{1}, \ldots, \alpha x_{d-1}\right)\in C \right)$. Hence
\begin{gather}\label{eq: MainThmeq1}
\forall \alpha\in K, \; \nu\left(\alpha\right)\in\Delta_{1}\implies\left(\alpha,\alpha x_{1}, \ldots, \alpha x_{d-1}\right)\in C
\end{gather}
(since we have $\exists \beta \in K \, \nu(\beta) = \nu(\alpha) \land (\beta, \beta x_1, \ldots, \beta x_{d-1}) \in C$, so $\frac{\alpha}{\beta} \in \mathcal{O}$ and multiplying by it we get $\left(\alpha,\alpha x_{1}, \ldots, \alpha x_{d-1}\right)\in C$).

Let $F_{1}:=\left\langle \left(1,x_{1}, \ldots, x_{d-1}\right)\right\rangle $.
Let $\pi: K^d \twoheadrightarrow K^d/F_1$ be the projection map, $f: K^d/F_1 \hookrightarrow K^d$ the valuation preserving embedding given by Lemma \ref{lem: val pres emb quotient}, and $\pi' := f \circ \pi : K^d \to K^d$. Note that $K^{d}/F_{1} \cong K^{d-1}$ as a valued $K$-vector space by Lemma \ref{lem: val pres emb quotient}, and that $\widetilde{C} := \pi(C)$ is still an $\mathcal{O}$-submodule of $K^{d}/F_{1}$. By induction hypothesis there is
a full flag $\left\{ 0\right\} \subsetneq\widetilde{F}_{2}\subsetneq \ldots \subsetneq\widetilde{F}_{d}=K^{d}/F_{1}$
and upwards-closed subsets $\nu_{K^d/F_1}(\widetilde{C}) = \Delta_{2}\supseteq \ldots \supseteq \Delta_{d}$
of $\Gamma_{\infty}$ satisfying the conclusion of the theorem with respect
to $\widetilde{C}$ (the equality $\nu_{K^d/F_1}(\widetilde{C}) = \Delta_{2}$ is by Remark \ref{rem: Delta1}). Note that 
\begin{gather} \label{eq: MainThmeq2}
	\forall v \in K^d, \; \nu_{K^d}(\pi'(v)) = \nu_{K^d/F_1}(\pi(v)) \geq \nu_{K^d}(v).
\end{gather}
  In particular  we have $\Delta_1 \supseteq \Delta_2$.

 Let the subspaces $F_{2}, \ldots, F_{d}$ be the preimages of $\widetilde{F}_{2}, \ldots, \widetilde{F}_{d}$
in $K^{d}$. We let $W := f(K^d/F_1) \subseteq K^{d}$ be the image of the valuation preserving embedding
$f: K^{d}/F_{1}\hookrightarrow K^{d}$. Then we have
\begin{gather}\label{eq: MainThmeq3}
	C=\left\{ v_{1}+w\mid v_{1}\in F_{1},\;\nu_{K^d}\left(v_{1}\right)\in\Delta_{1},\;w\in C\cap W\right\}.
\end{gather}
To see this, given an arbitrary $v \in C$, let $w := \pi'(v)$ and $v_1 := v - w$. As $\pi \circ f = \id_{K^d/F_1}$ by assumption, we have $\pi(w) = \pi( \pi' (v)) = \pi(f(\pi(v))) = \pi(v)$, hence $v_1 \in F_1$.
By \eqref{eq: MainThmeq2} we have $\nu_{K^d}(w) \geq \nu_{K^d}(v)$, and thus $\nu_{K^d}(v_1) \geq \min\{\nu_{K^d}(v),\nu_{K^d}(w)\} \geq \nu_{K^d}(v)$ as well. Thus $\nu_{K^d}(v_1) \in \Delta_1$, and $v_1 \in F_1$, which together with \eqref{eq: MainThmeq1} and the definition of $F_1$ implies $v_1 \in C$; hence $w = v - v_1 \in C$ as well. The opposite inclusion is obvious.

Furthermore, applying the isomorphism $f: K^d/F_1 \to W$ to 
\begin{gather*}
	\widetilde{C} = C/F_1 = \left\{ v_2 + \ldots + v_d \mid v_i \in \widetilde{F}_i, \nu_{K^d/F_1}(v_i) \in \Delta_i \right\}
\end{gather*}
we get
\begin{gather*}
	C\cap W=\left\{ v_{2}+ \ldots + v_{d}\mid v_{i}\in F_{i}\cap W,\;\nu_{K^d}\left(v_{i}\right)\in\Delta_{i}\right\},
\end{gather*}
which together with \eqref{eq: MainThmeq3} implies
\begin{gather*}
	C=\left\{ v_{1} + \ldots + v_{d}\mid v_{i}\in F_{i},\;\nu\left(v_{i}\right)\in\Delta_{i},\;v_{i}\in W\text{ for \ensuremath{i\geq2}}\right\}.
\end{gather*}

Now $C=\left\{ v_{1}+ \ldots +v_{d}\mid v_{i}\in F_{i},\;\nu\left(v_{i}\right)\in\Delta_{i}\right\} $
follows because for any such vectors $v_{1}, \ldots, v_{d}$, the vector $v_{i}$ (for $i\geq2$)
can be moved into $W$ by subtracting an element of $F_{1}$ with
valuation in $\Delta_{1}$, and collecting the differences in with
$v_{1}$. That is, given arbitrary $v_i \in F_i$ with $\nu(v_i) \in \Delta_i$, let $w_{i}:=\pi'\left(v_{i}\right) \in W$ for
$i\geq2$, 
and let $w_{1}:=v_{1}+\left(v_{2}-\pi'\left(v_{2}\right)\right)+ \ldots +\left(v_{d}-\pi'\left(v_{d}\right)\right)$. As above, using \eqref{eq: MainThmeq2}, for each $i \geq 2$ we have $\nu_{K^d}(v_i - \pi'(v_i)) \geq \min\{ \nu_{K^d}(v_i),  \nu_{K^d}(\pi'(v_i))\} \geq \nu_{K^d}(v_i) \in \Delta_i \subseteq \Delta_1$. Hence $\nu_{K^d} \left(w_1\right) \geq \min\{v_1, v_2 - \pi'(v_2), \ldots, v_d - \pi'(v_d)\} \in \Delta_1$. We also have $\nu_{K^d}(w_i) \geq \nu_{K^d}(v_i) \in \Delta_i$ for $i \geq 2$ by \eqref{eq: MainThmeq2}. Using that $f$ is a one-sided inverse of $\pi$ as above, we also have $v_i - \pi'(v_i) \in F_1 \subseteq F_i$ for $i \geq 2$. 
It follows that $w_i \in F_i$ for all $i \in [d]$. Putting all of this together, we get
 $w_{1}+ \ldots +w_{d}=v_{1}+ \ldots +v_{d}$, $w_{i}\in F_{i}$,
$\nu\left(w_{i}\right)\in\Delta_{i}$, and $w_{i}\in W$
for $i\geq2$.
\end{proof}

\begin{remark}\label{rem: MainThmDeltaD}
	Note that as $F_d = K^d$ in Theorem \ref{thm: main Omod pres}, we have $$\Delta_{d}=\left\{ \gamma\in\Gamma_{\infty}\mid \forall v \in K^d, \  \nu\left(v\right)=\gamma\implies v\in C\right\}.$$
That is, $\Delta_{d}$ is the quasi-radius of the largest quasi-ball around $0$ 
contained in $C$.
\end{remark}

\begin{remark}\label{rem: C intersec Fj}
	Given a convex set $0 \in C \subseteq K^d$ and any $F_i,\Delta_i, i \in [d]$ satisfying the conclusion of Theorem \ref{thm: main Omod pres} with respect to it, for every $j \in [d]$ we have 
 $$C \cap F_{j} = \left\{ v_{1}+ \ldots +v_{j}\mid v_{i}\in F_{i},\;\nu\left(v_{i}\right)\in\Delta_{i} \textrm{ for all } j \in [i] \right\}.$$
Indeed, if $x \in C \cap F_{j}$, then $x = v_1 + \ldots + v_d \in F_{j}$ for some $v_i \in F_i$ with $\nu(v_i) \in \Delta_i$ for $i \in [d]$. Then, using that the $F_i$ are increasing under inclusion and $\Delta_i$ are increasing under inclusion and upwards closed, $v_{j+1} + \ldots + v_d \in F_{j}$ and taking $v'_{j} := v_{j} + \ldots + v_d$ we have $v'_{j} \in F_{j}, \nu(v'_{j}) \geq \min \left\{ \nu(v_{i}) : j \leq i \leq d \right\} \in \Delta_{j}$ and $x = v_1 + \ldots + v_{j-1} + v'_{j}$. Conversely, any element $x = v_1 + \ldots + v_{j}$ with $v_i \in F_i, \nu(v_i) \in \Delta_i$ for $i \in [j]$ can be written as $x = v_1 + \ldots + v_d$ with $v_i := 0 \in F_i$ and $\nu(v_i) = \infty \in \Delta_i$ for $j+1 \leq i \leq d$. So $x \in C \cap F_{j}$.
\end{remark}

\begin{remark}\label{rem: choosing Fd-1}
\begin{enumerate}	\item It follows from the conclusion of Theorem \ref{thm: main Omod pres} that the subspace $F_{d-1}$ is a linear hyperplane in $K^d$, and 
every element of $C$ differs from an element of $F_{d-1}$ (and hence of $F_{d-1} \cap C$ in view of Remark \ref{rem: C intersec Fj}) by a vector in $K^d$ with
valuation in $\Delta_{d}$ (with $\Delta_{d}$ as in Remark \ref{rem: MainThmDeltaD}).
\item Conversely, $F_{d-1}$ can be chosen to be \emph{any} linear hyperplane $H$ in $K^d$
such that every element of $C$ differs from an element of $H$ by a vector in $K^d$ with
valuation in $\Delta_{d}$. To see this, let $H$ be such
a hyperplane in $K^d$. Then $C\cap H$ is a convex subset of $H \cong K^{d-1}$ containing $0$, hence an $\mathcal{O}$-submodule of $H$ by Proposition \ref{prop: conv iff subm}. Applying Theorem \ref{thm: main Omod pres} to $C \cap H$ in $H$ (with the induced valuation on $H$), there are $\Delta_{1}\supseteq\Delta_{2}\supseteq \ldots \supseteq\Delta_{d-1}$
and a full flag $\left\{ 0\right\} \subsetneq F_{1}\subsetneq\ldots\subsetneq F_{d-1}=H$,
such that $C\cap H=\left\{ v_{1}+ \ldots +v_{d-1}\mid v_{i}\in F_{i},\;\nu\left(v_{i}\right)\in\Delta_{i}\right\} $.
Then 
\begin{gather*}
	\left\{ v_{1}+ \ldots+ v_{d}\mid v_{i}\in F_{i},\;\nu\left(v_{i}\right)\in\Delta_{i}\right\} =\left\{ w+v_{d}\mid w\in C\cap H,\;\nu\left(v_{d}\right)\in\Delta_{d}\right\} =C.
\end{gather*}
\end{enumerate}

\end{remark}

\begin{expl}\label{ex: sph comple necc}
	The assumption of spherical completeness of $K$ is necessary in Theorem \ref{thm: main Omod pres}. For example, let $K:=\bigcup_{n\geq1}k\left(\left(t^{\frac{1}{n}}\right)\right)$
be the field of Puiseux series over a field $k$, and let $\widetilde{K}:=k\left[\left[t^{\mathbb{Q}}\right]\right]$
be the field of Hahn series over $k$ with rational exponents, it is the
spherical completion of $K$ (both fields have value group $\mathbb{Q}$
and valuation $\nu\left(x\right)=q$ where $x$ has leading term $t^{q}$; see e.g.~\cite[Example 3.3.23]{aschenbrenner2017asymptotic}). In particular $\sum_{n\geq1}t^{1-\frac{1}{n}} \in \widetilde{K} \setminus K$, and 
let
$$\widetilde{C}:=\left\{ \alpha\left(1,\sum_{n\geq1}t^{1-\frac{1}{n}}\right)+v\mid \alpha \in \widetilde{K}, v \in \widetilde{K}^2,  \nu_{\widetilde{K}}\left(\alpha\right)\geq0,\,\nu_{\widetilde{K}^2}\left(v\right)\geq1\right\} \subseteq\widetilde{K}^{2},$$
and let $C:=\widetilde{C}\cap K^{2}$. Then $\widetilde{C}$ is convex in $\widetilde{K}^2$, and hence $C$
is also convex as a subset of $K^{2}$. The basic idea behind why
$C$ is not of the form described in Theorem \ref{thm: main Omod pres} is that $C$ is close
enough to $\widetilde{C}$, and the subspace $F_{1}$ appearing in the conclusion of Theorem \ref{thm: main Omod pres}
for $\widetilde{C}$ must be close to $\left\langle \left(1,\sum_{n\geq1}t^{1-\frac{1}{n}}\right)\right\rangle $;
specifically, it must be $\left\langle \left(1,x+\sum_{n\geq1}t^{1-\frac{1}{n}}\right)\right\rangle $
for some $x \in K^2$ with $\nu\left(x\right)\geq1$, but $K^{2}$ contains
no such subspaces.

Indeed, by Remark \ref{rem: Delta1}, given any $F_i, \Delta_i$ satisfying the conclusion of Theorem \ref{thm: main Omod pres} with respect to $C$, the valuation of every element of $C$ must also be
the valuation of some element of $F_{1}\cap C$. So, to show that $C$
is not of the form described in Theorem \ref{thm: main Omod pres}, it suffices to show that
$C$ contains elements of valuation arbitrarily close to $0$, but
that for every $1$-dimensional subspace $F_{1}\subset K^{2}$, there
is some $q>0$ in $\Gamma$ such that every element of $F_{1}\cap C$ has valuation
at least $q$ (and note that from the definition of $C$, every element in it has positive valuation).

\begin{claim}
	For every $n\in\mathbb{N}_{\geq 1}$, there is some $v\in C$ with
$\nu_{K^2}\left(v\right)=\frac{1}{n}$. 
\end{claim}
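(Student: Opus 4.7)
The plan is to exhibit an explicit element of the claimed form by plugging $\alpha := t^{1/n}$ into the defining expression of $\widetilde{C}$ and then cleverly choosing the ``error term'' $v$ so that the resulting vector lies in $K^2$ and has valuation exactly $1/n$. Note $\alpha = t^{1/n} \in K$ and $\nu_{\widetilde{K}}(\alpha) = 1/n \geq 0$, as required by the definition of $\widetilde{C}$.

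Multiplying out yields
$$\alpha \left(1, \sum_{m \geq 1} t^{1 - 1/m}\right) = \left(t^{1/n}, \sum_{m \geq 1} t^{1 + 1/n - 1/m}\right).$$
The first coordinate is already in $K$, so the only obstruction to being in $K^2$ comes from the second coordinate, a Hahn series whose exponents accumulate at $1 + 1/n$ and which therefore has unbounded denominators. I would split this sum at $m = n$: the terms with $m < n$ satisfy $1 + 1/n - 1/m < 1$ and make up a \emph{finite} sum of fractional powers of $t$, hence lie in $K$; the terms with $m \geq n$ all have exponent $\geq 1$, so their total has $\nu_{\widetilde{K}}$-valuation $\geq 1$ in $\widetilde{K}$. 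Setting
$$v := \left(0,\, -\sum_{m \geq n} t^{1 + 1/n - 1/m}\right) \in \widetilde{K}^2,$$
we have $\nu_{\widetilde{K}^2}(v) \geq 1$ as needed, and (trivially) $v$ is a legitimate Hahn series since the exponents $\{1 + 1/n - 1/m : m \geq n\}$ form a strictly increasing sequence in $\mathbb{Q}$ converging to $1 + 1/n$, in particular a well-ordered subset of $\mathbb{Q}$.

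With these choices,
$$\alpha \left(1, \sum_{m \geq 1} t^{1 - 1/m}\right) + v = \left(t^{1/n},\, \sum_{m=1}^{n-1} t^{1 + 1/n - 1/m}\right),$$
a vector whose both coordinates are elements of $K$; by construction it belongs to $\widetilde{C} \cap K^2 = C$. Its $\nu_{K^2}$-valuation is the minimum of $1/n$ (the valuation of the first coordinate) and, when $n \geq 2$, the valuation of the second coordinate, which is achieved at the $m = 1$ term with exponent $1 + 1/n - 1 = 1/n$; for $n = 1$ the second coordinate is the empty sum $0$ and the valuation is still $1$. Either way the total valuation is exactly $1/n$, as required. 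I do not anticipate any genuine obstacle here: the whole content of the argument is the observation that peeling off the ``tail'' $\sum_{m \geq n}$ (which costs only something of valuation $\geq 1$, and is therefore absorbable into the $v$ slot of $\widetilde{C}$) leaves a finite head of fractional powers of $t$, automatically inside $K$.
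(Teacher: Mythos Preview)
Your proposal is correct and is essentially the same argument as the paper's: both exhibit the vector $t^{1/n}\bigl(1,\sum_{m=1}^{n-1} t^{1-1/m}\bigr)=\bigl(t^{1/n},\sum_{m=1}^{n-1} t^{1+1/n-1/m}\bigr)$ by subtracting the tail $\bigl(0,\sum_{m\ge n} t^{1+1/n-1/m}\bigr)$ of valuation $\ge 1$. The only quibble is notational: you use $v$ for the error term from the definition of $\widetilde{C}$, whereas the claim statement reserves $v$ for the element of $C$ being produced, so renaming one of them would improve readability.
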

\begin{proof}
	To see this, note that 
	\begin{gather*}
		t^{\frac{1}{n}}\left(1,\sum_{m=1}^{n-1}t^{1-\frac{1}{m}}\right)=t^{\frac{1}{n}}\left(1,\sum_{m\geq1}t^{1-\frac{1}{m}}\right)-t^{\frac{1}{n}}\left(0,\sum_{m\geq n}t^{1-\frac{1}{m}}\right)\in C
	\end{gather*}

as $\nu_{K}\left(t^{\frac{1}{n}}\right)=\frac{1}{n}\geq0$ and $\nu_{K^2}\left(t^{\frac{1}{n}}\left(0,\sum_{m\geq n}t^{1-\frac{1}{m}}\right)\right)=\frac{1}{n}+\left(1-\frac{1}{n}\right)\geq1$.
\end{proof}

\begin{claim}
	For every $1$-dimensional subspace $F_{1}\subset K^{2}$,
there is some $n\in\mathbb{N}_{n \geq 1}$ such that every element of $F_{1}\cap C$
has valuation at least $\frac{1}{n}$.
\end{claim}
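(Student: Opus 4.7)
The plan is to pick a generator $w = (a, b) \in K^{2}$ of $F_{1}$ normalized so that $\nu_{K^{2}}(w) = \min\{\nu_{K}(a), \nu_{K}(b)\} = 0$. Every element of $F_{1}$ is then $\lambda w$ for a unique $\lambda \in K$, with $\nu_{K^{2}}(\lambda w) = \nu_{K}(\lambda)$, so it suffices to bound $\nu_{K}(\lambda)$ below by some positive rational (depending only on $F_{1}$) whenever $\lambda w \in C$. Setting $s := \sum_{n \geq 1} t^{1 - 1/n} \in \widetilde{K}$ (so $\nu_{\widetilde{K}}(s) = 0$), the condition $\lambda w \in \widetilde{C}$ unfolds to the existence of $\alpha \in \widetilde{K}$ with $\nu_{\widetilde{K}}(\alpha) \geq 0$ and $(v_{1}, v_{2}) \in \widetilde{K}^{2}$ with $\nu_{\widetilde{K}}(v_{i}) \geq 1$ such that $\lambda(a, b) = \alpha(1, s) + (v_{1}, v_{2})$. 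Eliminating $\alpha$ between the two coordinates produces
$$\lambda(b - as) = v_{2} - v_{1} s,$$
and since $\nu_{\widetilde{K}}(s) = 0$ the right-hand side has valuation at least $1$, yielding $\nu_{K}(\lambda) + \nu_{\widetilde{K}}(b - as) \geq 1$.

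It thus suffices to show that $q_{F_{1}} := \nu_{\widetilde{K}}(b - as) < 1$, for then any $n \in \mathbb{N}_{\geq 1}$ with $\frac{1}{n} \leq 1 - q_{F_{1}}$ works. Next I would split into three cases. If $a = 0$, then $\nu_{K}(b) = 0$ and $\nu_{\widetilde{K}}(b - as) = 0$. If $a \neq 0$ and $\nu_{K}(a) > 0$, then the normalization forces $\nu_{K}(b) = 0$ while $\nu_{\widetilde{K}}(as) = \nu_{K}(a) > 0$, so again $\nu_{\widetilde{K}}(b - as) = 0$. The remaining case $\nu_{K}(a) = 0$ reduces, via $\nu_{\widetilde{K}}(b - as) = \nu_{\widetilde{K}}(b/a - s)$, to the key sub-claim: for every $c \in K$, $\nu_{\widetilde{K}}(s - c) < 1$.

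The hard part is this sub-claim, but it is forced by the very feature of $s$ that puts it outside $K$: its support contains exponents of every denominator. Suppose for contradiction that $\nu_{\widetilde{K}}(s - c) \geq 1$. Every exponent appearing in $s$ has the form $1 - 1/n$ and lies in $[0, 1)$, so $s$ and $c$ must share all terms of valuation strictly less than $1$; in particular $t^{1 - 1/n}$ appears in $c$ with coefficient $1$ for every $n \geq 1$. But $c \in K$ means $c \in k((t^{1/N}))$ for some $N \geq 1$, so $\operatorname{supp}(c) \subseteq \frac{1}{N}\mathbb{Z}$. Taking $n = N + 1$ would require $\frac{N}{N+1} \in \frac{1}{N}\mathbb{Z}$, i.e.~$(N+1) \mid N^{2}$; since $\gcd(N, N+1) = 1$, this forces $(N+1) \mid 1$, which is impossible for $N \geq 1$. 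This contradiction finishes the sub-claim and hence the claim.
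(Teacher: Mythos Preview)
Your proof is correct and hinges on the same core fact as the paper's: for every $c \in K$ one has $\nu_{\widetilde{K}}(s - c) < 1$ (which you actually justify, while the paper merely asserts it). The executions differ. The paper splits into $F_1 = \langle(0,1)\rangle$ and $F_1 = \langle(1,x)\rangle$; in the second case it writes $\alpha(1,x) = \beta(1,s) + (w_1,w_2)$, divides through by $\beta$ to express $x$ as a perturbation of $s$, and derives a contradiction from $\nu(\beta) < 1/n$. Your linear elimination---subtracting $s$ times the first coordinate equation from the second to get $\lambda(b - as) = v_2 - v_1 s$---is cleaner: it avoids the division entirely, handles all $F_1$ uniformly (your short case split only enters when verifying $\nu_{\widetilde{K}}(b - as) < 1$), and yields the bound $\nu(\lambda) \geq 1 - \nu_{\widetilde{K}}(b - as)$ directly.
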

\begin{proof}
	We prove this by breaking into
two cases.

\noindent \emph{Case 1.} $F_{1}=\left\langle \left(0,1\right)\right\rangle $. 

Assume $x \in F_1 \cap C$, then $x = (x_1,x_2) = \alpha\left(1,\sum_{n\geq1}t^{1-\frac{1}{n}}\right)+v$
for some $\alpha  \in K, v = (v_1, v_2) \in \widetilde{K}^2$ with $\nu_{\widetilde{K}}(\alpha) \geq 0, \nu_{\widetilde{K}^2}(v) \geq 1$, and $x_1 = 0$, so $\alpha = - v_1$. But $1 \leq \nu_{\widetilde{K}^2}(v) = \min \{\nu_{\widetilde{K}}(v_1), \nu_{\widetilde{K}}(v_2) \}$, hence $\nu_{\widetilde{K}}(\alpha) \geq 1$ as well. Since $\nu_{\widetilde{K}} \left( \sum_{n\geq1}t^{1-\frac{1}{n}} \right) = 0$, it follows that $\nu_{\widetilde{K}^2}(x) = \min \left\{ \nu_{\widetilde{K}}(0), \nu_{\widetilde{K}} \left(\alpha\left( \sum_{n\geq1}t^{1-\frac{1}{n}} \right) \right)\right\} \geq 1$. Thus every element of
$F_{1}\cap C$ has valuation at least $1$.

\noindent \emph{Case 2.} $F_{1}=\left\langle \left(1,x\right)\right\rangle $ for some
$x\in K$. 

Given any $x \in K$, there must exist some $n\in\mathbb{N}$ such that $\nu_{\widetilde{K}}\left(x-\sum_{m\geq1}t^{1-\frac{1}{m}}\right)\leq1-\frac{1}{n}$.
Given any $v\in F_{1}\cap C$, we have 
$$v = \alpha\left(1,x\right) = \beta\left(1,\sum_{m\geq1}t^{1-\frac{1}{m}}\right)+w$$
 for some $\alpha\in K$,
some $\beta\in\widetilde{K}$ with $\nu_{\widetilde{K}}\left(\beta\right)\geq0$ and $w=\left(w_{1},w_{2}\right)\in\widetilde{K}^{2}$
with $\nu_{\widetilde{K}^2}\left(w\right)\geq1$.
Without loss of generality $\alpha \neq 0$, so we have
\begin{gather*}
	x=\frac{\alpha x}{\alpha}=\left(w_{2}+\beta\sum_{m\geq1}t^{1-\frac{1}{m}}\right)\left(w_{1}+\beta\right)^{-1}=\left(\frac{w_{2}}{\beta}+\sum_{m\geq1}t^{1-\frac{1}{m}}\right)\left(1+\frac{w_{1}}{\beta}\right)^{-1}.
\end{gather*}
If $\nu_{\widetilde{K}}\left(\beta\right)<\frac{1}{n}$, then 
\begin{gather*}
\nu_{\widetilde{K}}\left(\frac{w_{1}}{\beta}\right) > 1-\frac{1}{n}, \  \nu_{\widetilde{K}}\left(\frac{w_{2}}{\beta}\right)>1-\frac{1}{n}, \  
\nu_{\widetilde{K}}\left(\left(1+\frac{w_{1}}{\beta}\right)^{-1}\right)=0 \textrm{, and}\\
\nu_{\widetilde{K}}\left(\left(1+\frac{w_{1}}{\beta}\right)^{-1}-1\right)>1-\frac{1}{n} \textrm{, so}\\
\nu\left(x-\sum_{m\geq1}t^{1-\frac{1}{m}}\right)=\nu\left(\frac{w_{2}}{\beta}\left(w_{1}+\beta\right)^{-1}+\left(\sum_{m\geq1}t^{1-\frac{1}{m}}\right)\left(\left(1+\frac{w_{1}}{\beta}\right)^{-1}-1\right)\right)\\>1-\frac{1}{n},	
\end{gather*}
a contradiction to the choice of $n$. Thus $\nu\left(\beta\right)\geq\frac{1}{n}$, and
hence $\nu\left(v\right)\geq\frac{1}{n}$.
\end{proof}

Thus no $1$-dimensional subspace $F_{1}$ of $K^2$ can fill its desired role
in the presentation for $C$. 
\end{expl}

Theorem \ref{thm: main Omod pres} implies the following simple description of convex sets over spherically complete valued fields.
\begin{cor}\label{cor: descr of conv sets in sph compl}
	If $K$ is a spherically complete valued field and $d \in \mathbb{N}_{\geq 1}$, then the non-empty convex subsets
of $K^{d}$ are precisely the affine images of $\nu^{-1}\left(\Delta_{1}\right)\times \ldots \times\nu^{-1}\left(\Delta_{d}\right)$
for some upwards closed $\Delta_{1}, \ldots, \Delta_{d} \subseteq \Gamma_{\infty}$.
\end{cor}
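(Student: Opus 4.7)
The plan is to reduce to $\mathcal{O}$-submodules via Proposition \ref{prop: conv iff subm}, apply Theorem \ref{thm: main Omod pres} to get the flag-based presentation, and then realize the desired product by constructing a valuation-orthogonal basis of $K^d$ adapted to that flag.

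By Proposition \ref{prop: conv iff subm}(2), non-empty convex subsets of $K^d$ are translates of $\mathcal{O}$-submodules; as translations are affine, it suffices to handle an $\mathcal{O}$-submodule $C$. I will apply Theorem \ref{thm: main Omod pres} to fix a flag $\{0\} \subsetneq F_1 \subsetneq \ldots \subsetneq F_d = K^d$ and upwards-closed $\Delta_1 \supseteq \ldots \supseteq \Delta_d$ in $\Gamma_\infty$ with $C = \{v_1+\ldots+v_d : v_i \in F_i, \nu(v_i) \in \Delta_i\}$. The main step will be to build a basis $f_1, \ldots, f_d$ of $K^d$ with $f_i \in F_i$, $\nu(f_i) = 0$, and valuation-orthogonal in the sense that $\nu\left(\sum_i \alpha_i f_i\right) = \min_i \nu(\alpha_i)$ for all $\alpha_i \in K$. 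I will do this inductively using Lemma \ref{lem: val pres emb quotient}: given $f_1, \ldots, f_{i-1}$, set $V := F_{i-1}$, let $\pi: K^d \twoheadrightarrow K^d/V$ be the projection, and take $f_i := \phi(u)$, where $\phi: K^d/V \hookrightarrow K^d$ is the valuation-preserving embedding from the lemma and $u$ spans the one-dimensional subspace $\pi(F_i)$ with $\nu_{K^d/V}(u) = 0$. Orthogonality will then follow by induction combined with the estimate $\nu(w) \leq \nu_{K^d/V}(\pi(w))$ and the ultrametric inequality.

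Setting $L: K^d \to K^d$ to be the linear isomorphism with $L(e_i) = f_i$ on the standard basis, I then verify $L(\nu^{-1}(\Delta_1) \times \ldots \times \nu^{-1}(\Delta_d)) = C$. The inclusion $\subseteq$ is immediate, as each $\alpha_i f_i \in F_i$ has $\nu(\alpha_i f_i) = \nu(\alpha_i) \in \Delta_i$. For $\supseteq$, given $v = \sum_i v_i \in C$ with $v_i \in F_i$, $\nu(v_i) \in \Delta_i$, I expand $v_i = \sum_{j \leq i} \beta_{ij} f_j$ and set $\alpha_j := \sum_{i \geq j} \beta_{ij}$; valuation-orthogonality yields $\nu(\beta_{ij}) \geq \nu(v_i) \in \Delta_i \subseteq \Delta_j$, so $\nu(\alpha_j) \in \Delta_j$ since $\Delta_j$ is upwards closed. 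The converse direction is routine: each $\nu^{-1}(\Delta_i)$ is a quasi-ball, hence convex (Example \ref{ex: some conv sets}(1)), products of convex sets are convex, and affine images of convex sets are convex by Example \ref{ex: some conv sets}(3). The main obstacle is establishing the valuation-orthogonality of the basis — an arbitrary basis adapted to the flag need not satisfy these valuation estimates, which is precisely where Lemma \ref{lem: val pres emb quotient} is essential.
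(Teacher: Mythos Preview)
Your proposal is correct and follows essentially the same approach as the paper's proof: both reduce to $\mathcal{O}$-submodules, apply Theorem \ref{thm: main Omod pres}, and use Lemma \ref{lem: val pres emb quotient} to produce a flag-adapted basis with valuation $0$ satisfying the orthogonality property $\nu\left(\sum_i \alpha_i f_i\right)=\min_i \nu(\alpha_i)$, from which both inclusions follow exactly as you indicate. The paper phrases the key property of the basis as $\nu(v_i+x)\leq 0$ for all $x\in F_{i-1}$ and then derives orthogonality from it, whereas you go straight to orthogonality via the quotient-valuation inequality; these are equivalent formulations of the same idea.
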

\begin{proof}
Let $C \subseteq K^d$ be an affine image of $\nu^{-1}\left(\Delta_{1}\right)\times \ldots \times\nu^{-1}\left(\Delta_{d}\right)$
for some upwards closed $\Delta_{1}, \ldots, \Delta_{d} \subseteq\Gamma_{\infty}$. Note that $\nu^{-1}\left(\Delta_{1}\right)\times \ldots \times\nu^{-1}\left(\Delta_{d}\right)$ is convex, and an image of a convex set under an affine map is convex (Example \ref{ex: some conv sets}), hence $C$ is convex.

	Conversely, let $\emptyset \neq C \subseteq K^d$ be convex. Since the affine images of
$\mathcal{O}$-submodules of $K^d$ give us all non-empty convex sets by Proposition \ref{prop: conv iff subm}, without loss of generality $0\in C$ and $C$ is an $\mathcal{O}$-submodule of $K^d$. 
Let $\left\{ 0\right\} \subsetneq F_{1}\subsetneq \ldots \subsetneq F_{d}=K^{d}$
and $\nu_{K^d}(C) = \Delta_{1}\supseteq\Delta_{2}\supseteq \ldots \supseteq\Delta_{d}$ 
be as given by Theorem \ref{thm: main Omod pres} for $C$. Using Lemma \ref{lem: val pres emb quotient} we can choose
$v_{1}, \ldots ,v_{d}\in K^{d}$ such that for every $i \in [d]$ we have:
\begin{enumerate}
	\item $v_{1},\ldots,v_{i}$ is a basis for $F_{i}$,
	\item $\nu\left(v_{i}\right)=0$, 
	\item $\nu\left(v_{i}+x\right)\leq0$ for all $x\in F_{i-1}$.
\end{enumerate}
  Then $C$ is the image of $\nu^{-1}\left(\Delta_{1}\right)\times \ldots \times\nu^{-1}\left(\Delta_{d}\right)$
under the linear map $f:K^{d}\rightarrow K^{d}$ such that $f\left(e_{i}\right)=v_{i}$,
where $e_{i}$ is the $i$th standard basis vector. Indeed, if $x \in f \left(\nu^{-1}\left(\Delta_{1}\right)\times \ldots \times\nu^{-1}\left(\Delta_{d}\right) \right)$ then $x = \sum_{i=1}^{d} c_i v_i$ for some $c_i$ with $\nu(c_i) \in \Delta_i$. Using (2)  this implies $\nu(c_i v_i) = \nu(c_i) \in \Delta_i $, and $c_i v_i \in F_i$, hence $x \in C$.  Conversely, let $x$ be an arbitrary element of $C$, then $x = w_1 + \ldots + w_d$ for some $w_i \in F_i$ with $\nu(w_i) \in \Delta_i$. Each $w_i$ is a linear combination of $v_1, \ldots, v_i$, say $w_i = \sum_{j=1}^{i} c_{i,j}v_j$. 

Now we claim that for any $i \in [d]$, $\alpha \in K$ and $v \in F_{i-1}$ we have $\nu(\alpha v_i + v) = \min \{ \nu(\alpha v_i), \nu(v) \}$. Indeed, replacing $v$ and $\alpha$ by $\alpha^{-1}v \in F_{i-1}$ and $\alpha^{-1}\alpha \in K$, respectively, changes both sides of the claimed equality by the same amount, hence we may assume that $\alpha = 0$ or $\alpha = 1$. The first case holds trivially, in the second case we need to show that $\nu(v_i + v) = \min \{ \nu(v_i), \nu(v) \}$. If $\nu(v_i) \neq \nu(v)$ this holds by the ultrametric inequality, so we assume $\nu(v_i) = \nu(v) = 0$ (using (2)). Then, using (3), $0 \geq \nu(v_i + v) \geq \min \{ \nu(v_i), \nu(v)  \} = 0$, so $\nu(v_i+v) = 0$ as well.

Applying this claim by induction on $i \in [d]$, we get
\[ \nu \left( \sum_{j=1}^{i} c_{i,j} v_j \right) = \min_{j} \left\{ \nu(c_{i,j} v_j) \right\},\]
 which using (2) implies $\nu(w_i) = \nu \left( \sum_{j=1}^{i} c_{i,j} v_j \right) = \min_{j} \left\{ \nu(c_{i,j} ) \right\}$ for each $i \in [d]$. As for each $i \in [d]$ we have $\nu(w_i) \in \Delta_i$ and $\Delta_i$ is upwards closed, it follows that $\nu(c_{i,j}) \in \Delta_i$ for all $i \in [d], j \in [i]$. Regrouping the summands $c_{i,j}v_i$, it follows that $x = w_1 + \ldots + w_d$ is a linear combination of $v_1, \ldots, v_d$ where the coefficient of $v_i$ has valuation in $\Delta_i$, hence $x$ belongs to $f \left(\nu^{-1}\left(\Delta_{1}\right)\times \ldots \times\nu^{-1}\left(\Delta_{d}\right) \right)$.
\end{proof}

We can eliminate the assumption of spherical completeness of the field when only
considering convex hulls of finite sets. 
We will say that a convex set is
\emph{finitely generated} if it is the convex hull of a finite set of points.

\begin{lemma}\label{lem: fin gen}
	A subset $C\subseteq K^{d}$ is a finitely generated
$\mathcal{O}$-module if and only if it is a finitely generated convex set and contains
$0$.
\end{lemma}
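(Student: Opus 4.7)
My plan is to prove both directions by showing that the $\mathcal{O}$-submodule generated by a finite set $\{x_0,\ldots,x_n\}$ and the convex hull $\conv(\{0,x_0,\ldots,x_n\})$ coincide whenever $0$ is present, so the two notions of finite generation for such sets agree. I will use Proposition \ref{prop: conv iff subm} throughout (an $\mathcal{O}$-submodule of $K^d$ is the same thing as a convex set containing $0$).

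For the forward direction, suppose $C$ is an $\mathcal{O}$-submodule generated by $x_1,\ldots,x_n$. Of course $0\in C$. I claim that $C=\conv(\{0,x_1,\ldots,x_n\})$. The inclusion $\supseteq$ is clear since $C$ is convex and contains $0,x_1,\ldots,x_n$. For $\subseteq$, an arbitrary element of $C$ has the form $\sum_{i=1}^n\alpha_ix_i$ with $\alpha_i\in\mathcal{O}$, and since $\alpha_0:=1-\sum_{i=1}^n\alpha_i\in\mathcal{O}$ (as $\mathcal{O}$ is a ring), we can write
\[\sum_{i=1}^n\alpha_ix_i=\alpha_0\cdot 0+\sum_{i=1}^n\alpha_ix_i,\]
exhibiting this element as a convex combination of $0,x_1,\ldots,x_n$. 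Hence $C$ is a finitely generated convex set and contains $0$.

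For the converse, suppose $0\in C$ and $C=\conv(\{x_0,\ldots,x_n\})$. By Proposition \ref{prop: conv iff subm}(1), $C$ is an $\mathcal{O}$-submodule of $K^d$. Let $M$ be the $\mathcal{O}$-submodule of $K^d$ generated by $x_0,\ldots,x_n$; I will show $C=M$. Since $C$ is an $\mathcal{O}$-module containing each $x_i$, it contains $M$, giving $M\subseteq C$. Conversely, $M$ is an $\mathcal{O}$-submodule, hence convex and contains $0$ again by Proposition \ref{prop: conv iff subm}(1), so $M\supseteq\conv(\{x_0,\ldots,x_n\})=C$. Therefore $C=M$ is finitely generated as an $\mathcal{O}$-module.

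There is no real obstacle here — the whole content is the simple arithmetic observation that when $0$ lies in the set, convex combinations and arbitrary $\mathcal{O}$-linear combinations of a finite generating set produce the same subset, because the ``missing'' coefficient $1-\sum\alpha_i$ automatically lies in $\mathcal{O}$ and can be absorbed by $0$. The only subtlety worth stating explicitly is to invoke Proposition \ref{prop: conv iff subm} in both directions so as to pass cleanly between ``convex containing $0$'' and ``$\mathcal{O}$-submodule.''
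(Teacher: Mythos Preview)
Your proof is correct and follows essentially the same approach as the paper's: both directions rest on the observation that the $\mathcal{O}$-module generated by a finite set $X$ coincides with $\conv(X\cup\{0\})$, using Proposition~\ref{prop: conv iff subm} to pass between the two viewpoints. The paper's argument is terser but identical in content.
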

\begin{proof}
	If an $\mathcal{O}$-module $C\subseteq K^{d}$ is generated as
an $\mathcal{O}$-module by some finite set $X$, then it is the convex
hull of $X\cup\left\{ 0\right\} $. If a set $C$ is the convex hull of some
finite set $X$ and contains $0$, then it is an $\mathcal{O}$-module by Proposition \ref{prop: conv iff subm}, clearly generated as an $\mathcal{O}$-module
by $X$.
\end{proof}

We have the following analog of Theorem \ref{thm: main Omod pres} in the finitely generated case over an arbitrary valued field.

\begin{cor}\label{cor: class of fg modules}
	Let $K$ be an \emph{arbitrary} valued field and $C$ a \emph{finitely generated} convex set containing
$0$. Then there is a full flag $\left\{ 0\right\} \subsetneq F_{1}\subsetneq \ldots \subsetneq F_{d}=K^{d}$
and an increasing sequence $\gamma_{1}\leq\gamma_{2}\leq \ldots \leq\gamma_{d}\in\Gamma_{\infty}$
such that 
\[C=\left\{ v_{1}+ \ldots +v_{d}\mid v_{i}\in F_{i},\;\nu\left(v_{i}\right)\geq\gamma_{i}\right\}.\]
\end{cor}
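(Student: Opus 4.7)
The plan is to prove this by induction on $d$, following roughly the architecture of the proof of Theorem \ref{thm: main Omod pres} but exploiting finite generation to completely bypass the spherical completeness assumption: instead of invoking Lemma \ref{lem: inters of conv is nonempty} to produce an element realising the minimum valuation in $C$, we simply pick a generator of minimum valuation.

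For the base case $d=1$, if $C = \mathcal{O} x_1 + \ldots + \mathcal{O} x_n \subseteq K$ with $\nu(x_1) = \min_i \nu(x_i)$, then $x_j/x_1 \in \mathcal{O}$ for every $j$, so $C = \mathcal{O} x_1 = \{x \in K : \nu(x) \geq \nu(x_1)\}$; take $F_1 := K$ and $\gamma_1 := \nu(x_1)$. For $d \geq 2$, choose generators $x_1, \ldots, x_n$ of $C$ with $\gamma_1 := \nu(x_1) = \min_i \nu(x_i)$, and set $F_1 := \langle x_1 \rangle$. A short calculation shows $F_1 \cap C = \{v \in F_1 : \nu(v) \geq \gamma_1\}$: any $\alpha x_1 = \sum \beta_i x_i$ with $\beta_i \in \mathcal{O}$ has valuation at least $\gamma_1$, forcing $\alpha \in \mathcal{O}$. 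Now apply Lemma \ref{lem: val pres emb quotient} to obtain a valuation-preserving embedding $s : K^d/F_1 \hookrightarrow K^d$ with $\pi \circ s = \id$ (where $\pi: K^d \twoheadrightarrow K^d/F_1$) and a valuation-preserving identification $K^d/F_1 \cong K^{d-1}$. Then $\pi(C)$ is the $\mathcal{O}$-module generated by $\pi(x_2), \ldots, \pi(x_n)$, hence a finitely generated convex subset of $K^{d-1}$ containing $0$. The induction hypothesis provides a flag $\bar{F}_2 \subsetneq \ldots \subsetneq \bar{F}_d = K^d/F_1$ and $\gamma_2 \leq \ldots \leq \gamma_d$ so that $\pi(C) = \{\bar{v}_2 + \ldots + \bar{v}_d : \bar{v}_i \in \bar{F}_i, \nu(\bar{v}_i) \geq \gamma_i\}$. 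Set $F_i := \pi^{-1}(\bar{F}_i)$ for $i \geq 2$, completing the flag. Note that $\gamma_1 \leq \gamma_2$: by \eqref{eq: MainThmeq2}, $\nu(\pi(x_j)) \geq \nu(x_j) \geq \gamma_1$ for every $j$, so every element of $\pi(C)$ has valuation at least $\gamma_1$, in particular the generator attaining $\gamma_2$.

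The main verification is that $C = \{v_1 + \ldots + v_d : v_i \in F_i, \nu(v_i) \geq \gamma_i\}$. For the forward inclusion, given $x \in C$, decompose $\pi(x) = \bar{v}_2 + \ldots + \bar{v}_d$ in $\pi(C)$ and lift via $s$ to $v_i := s(\bar{v}_i) \in F_i$ with $\nu(v_i) = \nu(\bar{v}_i) \geq \gamma_i$; set $v_1 := x - v_2 - \ldots - v_d$. Then $\pi(v_1) = 0$, so $v_1 \in F_1$, and $\nu(v_1) \geq \min \{\nu(x), \nu(v_2), \ldots, \nu(v_d)\} \geq \gamma_1$ using $\nu(x) \geq \gamma_1$ (as $x$ is an $\mathcal{O}$-combination of the $x_j$) and $\gamma_i \geq \gamma_1$. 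For the reverse inclusion, given such $v_i$: $v_1 \in F_1 \cap C \subseteq C$ by the first claim, and for each $i \geq 2$ we have $\pi(v_i) \in \bar{F}_i$ with $\nu(\pi(v_i)) \geq \nu(v_i) \geq \gamma_i$ by \eqref{eq: MainThmeq2}, hence $\pi(v_i) \in \pi(C)$; pick $w_i \in C$ with $\pi(w_i) = \pi(v_i)$. Then $v_i - w_i \in F_1$ and $\nu(v_i - w_i) \geq \min(\nu(v_i), \nu(w_i)) \geq \gamma_1$, so $v_i - w_i \in F_1 \cap C$, giving $v_i \in C$. Summing produces $v_1 + \ldots + v_d \in C$.

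The only real obstacle is the bookkeeping around valuations under the quotient: one must check $\gamma_1 \leq \gamma_2$ and keep track of the inequalities $\nu(\pi(v)) \geq \nu(v)$ when moving between $K^d$ and $K^d/F_1$. Everything else is a straightforward induction; the replacement of spherical completeness by finite generation enters precisely at the step where we choose $F_1$ to be spanned by an actual generator of minimum valuation.
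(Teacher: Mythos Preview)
Your proof is correct and follows essentially the same strategy as the paper's: induct on $d$, pick $F_1$ to be the span of a generator of minimum valuation, and pass to the quotient $K^d/F_1$ via Lemma~\ref{lem: val pres emb quotient}. The paper first invokes Carath\'eodory (Proposition~\ref{prop: Radon}) to reduce to $d$ generators $v_1,\ldots,v_d$ and then unrolls the induction into an explicit Gram--Schmidt--style iteration (successively replacing $v_i$ by its image under the projection $\pi'_j$ and reordering), whereas you keep an arbitrary generating set and apply the induction hypothesis to $\pi(C)$ as a black box. Your version is slightly cleaner and makes the two inclusions in the final verification more transparent; the paper's version is more constructive in that it exhibits an explicit basis adapted to the flag. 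One small remark: your justification of $\gamma_1\le\gamma_2$ via ``the generator attaining $\gamma_2$'' is a touch loose --- what you really use is that the inductive description of $\pi(C)$ guarantees an element of valuation exactly $\gamma_2$ (take a nonzero vector in $\bar F_2$ scaled appropriately, and zeros elsewhere), and that element must have valuation $\ge\gamma_1$. Also, both your proof and the paper's silently assume $x_1\neq 0$; the degenerate case $C=\{0\}$ (all $\gamma_i=\infty$, any flag) is trivial but worth a sentence.
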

\begin{proof}
	Let $C \ni 0$ be the convex hull of some finite set $X \subseteq K^d$. By a repeated application of Proposition \ref{prop: Radon}, $C$ is the convex
hull of some $d+1$ elements $v_{0}, \ldots, v_{d}$ from $X$ (possibly with  $x_i = x_j$ for some $i,j$). As $0 \in C$, we have $0 = \sum_{i=0}^{d} \alpha_i v_i$ for some $\alpha_i \in \mathcal{O}$ with $\sum_{i=0}^{d} \alpha_i = 1$. Let $j$ be such that $\nu(\alpha_{j})$ is minimal among $\{ \nu(\alpha_i): 0 \leq i \leq d \}$. In particular $\alpha_j \neq 0$, hence $v_j = \left(1 - \sum_{i \neq j} \frac{\alpha_i}{\alpha_j} \right)0 + \sum_{i \neq j}\frac{\alpha_i}{\alpha_j}v_i$. By the choice of $j$ we have $\frac{\alpha_i}{\alpha_j} \in \mathcal{O}$ for all $i \neq j$, hence also $1 - \sum_{i \neq j} \frac{\alpha_i}{\alpha_j} \in \mathcal{O} $, thus $v_j \in \conv \left( \{0\} \cup \{ v_i : i \neq j\} \right)$, and so also $C = \conv \left( \{0\} \cup \{ v_i : i \neq j\} \right)$. Reordering if necessary, we can thus assume that $C$ is the convex hull of some $\left\{ 0,v_{1},\ldots,v_{d}\right\} \subseteq C $ with $\nu\left(v_{1}\right)\leq\nu\left(v_{i}\right)$ for each $i \in [d]$.

Let $F_{1}:=\left\langle v_{1}\right\rangle $ and $\gamma_{1}:=\nu\left(v_{1}\right)$.
Let $\pi_1: K^d \twoheadrightarrow K^d/F_1$ be the projection map, $f_1: K^d/F_1 \hookrightarrow K^d$ the valuation preserving embedding given by Lemma \ref{lem: val pres emb quotient}, $V_1 := f_1 \left(K^{d}/F_{1} \right)$ and $\pi'_1 := f_1 \circ \pi_1 : K^d \to K^d$. 

For $i \geq 2$, as explained after \eqref{eq: MainThmeq3} in the proof of Theorem \ref{thm: main Omod pres} we have $v_i - \pi'_1(v_i) \in F_1$; and by  \eqref{eq: MainThmeq2} there and assumption we have $\nu(\pi'_1(v_i)) \geq \nu(v_i) \geq \nu(v_1)$. So $v_i - \pi'_1(v_i) \in \mathcal{O} v_1$ for all $i \geq 2$, which implies 
$$\conv\left( \left\{ 0, v_{1},\pi'_1\left(v_{2}\right), \ldots,\pi'_1\left(v_{d}\right) \right\}\right)=\conv\left( \left\{ 0, v_{1}, \ldots, v_{d} \right\}\right) = C.$$
Without loss of generality we suppose $\nu\left(\pi'_1\left(v_{2}\right)\right)\leq\nu\left(\pi'_1\left(v_{i}\right)\right)$
for $i\geq3$, and let $F_{2}:=\left\langle v_{1},\pi'_1(v_{2})\right\rangle $
and $\gamma_{2}:=\nu\left(\pi'_1\left(v_{2}\right)\right) \geq \nu(v_1) = \gamma_1$ by assumption.
By definition of the valuation on the quotient space, using the properties of $f$, we have 
$$\nu_{K}(\pi'_1(v_i)) = \nu_{K^d/F_1}(\pi_1(v_i)) = \nu_{K^d/F_1}(\pi_1( \pi'_1(v_i))) \geq \nu_{K^d}(\pi'_1(v_i) + \alpha v_1)$$
 for all $\alpha \in K$.
As in the proof of Corollary \ref{cor: descr of conv sets in sph compl}, this implies $\nu(\beta \pi_1'(v_i) + \alpha v_1) = \min\{ \beta \nu(\pi_1'(v_i)), \nu(\alpha v_1)) \}$ for all $i \geq 2$ and $\alpha, \beta \in K$. It follows that
 $$\left\{ nv_{1}+m \pi'_1(v_{2})\mid n,m\in\mathcal{O}\right\} =\left\{ w_{1}+w_{2}\mid w_{i}\in F_{i},\;\nu\left(w_{i}\right)\geq\gamma_{i}\right\}. $$
To see that the set on the right is contained in the set on the left, assume $x = w_1 + w_2$ for some $w_i \in F_i, \nu(w_i) \geq \gamma_i$. Then $w_1 = \alpha_1 v_1$ and $w_2 = \alpha_2 v_1 + \beta \pi'_1(v_2)$ for some $\alpha_1, \alpha_2, \beta \in K$, and by the observation above $\gamma_2 \leq \nu(w_2) = \min\{ \nu( \alpha_2 v_1) , \nu(\beta \pi'_1(v_2)) \} $. So $x = (\alpha_1 + \alpha_2) v_1 + \beta \pi'_1(v_2)$, $\nu((\alpha_1 + \alpha_2)v_1) \geq \gamma_1 = \nu(v_1)$, so $(\alpha_1 + \alpha_2) \in \mathcal{O}$, and $\nu(\beta) \geq \gamma_2$, as wanted.

Now we replace $v_i$ by $\pi'_1(v_i)$ for $i \geq 2$, and let 
$\pi_2: K^d \twoheadrightarrow K^d/F_2$ be the projection map, $f_2: K^d/F_2 \hookrightarrow K^d$ the valuation preserving embedding given by Lemma \ref{lem: val pres emb quotient}, $V_2 := f_2 \left(K^{d}/F_{2} \right)$ and $\pi'_2 := f_2 \circ \pi_2 : K^d \to K^d$. 
For $i\geq3$, $v_{i}-\pi'_{2}\left(v_{i}\right)\in F_{2}$ and $v_{i}-\pi'_{2}\left(v_{i}\right)\in\mathcal{O}v_{1}+\mathcal{O}v_{2}$,
so again replacing $v_{i}$ with $\pi'_{2}\left(v_{i}\right)$ for $i \geq 3$ does
not change the convex hull. Again we may assume $\nu\left(\pi'_{2}\left(v_{3}\right)\right)\leq\nu\left(\pi'_{2}\left(v_{i}\right)\right)$
for $i\geq4$, and let $F_{3}:=\left\langle v_{1},v_{2},v_{3}\right\rangle $
and $\gamma_{3}:=\nu\left(\pi'_{2}\left(v_{3}\right)\right)$.
Repeating
this argument as above $d$ times, we have chosen vectors $v_i$, increasing spaces $F_i = \langle v_1, \ldots, v_i \rangle$ and increasing $\gamma_i = \nu(v_i) \in \Gamma$ for $i \in [d]$ so that 
\begin{gather*}
	C = \conv \left( \left\{ 0, v_1, \ldots, v_d  \right\}\right) = \\
	\left\{ n_1v_{1}+ \ldots + n_d v_d\mid n_i\in\mathcal{O}\right\} =\left\{ w_{1}+ \ldots + w_{d}\mid w_{i}\in F_{i},\;\nu\left(w_{i}\right)\geq\gamma_{i}\right\}. \qedhere
\end{gather*}
\end{proof}

\section{Combinatorial properties of convex sets}\label{sec: combinatorics of conv sets}

The following definition is from \cite[Section 2.4]{aschenbrenner2016vapnik}.

\begin{defn}
	Given a set $X$ and $d \in \mathbb{N}_{\geq 1}$, a family of subsets $\mathcal{F} \subseteq \mathcal{P}\left(X\right)$
has \emph{breadth} $d$ if any nonempty intersection of finitely many sets
in $\mathcal{F}$ is the intersection of at most $d$ of them, and $d$
is minimal with this property.
\end{defn}

\begin{lemma}\label{lem: field ext conv}
	Let $K$ be a valued field and $S$ a convex subset of $K^d$.
	\begin{enumerate}
		\item If $0 \in S$ and $S$ is finitely generated, then it is generated as an $\mathcal{O}$-module by a finite linearly independent set of vectors.
		\item Let $\widetilde{K}$ be a valued field extension of $K$ and $\widetilde{S} := \conv_{\widetilde{K}^d}(S) \subseteq \widetilde{K}^d$. Then $\widetilde{S} \cap K^d = S$.
	\end{enumerate}
\end{lemma}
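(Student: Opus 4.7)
For (1), the plan is to invoke the construction inside the proof of Corollary \ref{cor: class of fg modules}. By Lemma \ref{lem: fin gen}, $S$ is a finitely generated $\mathcal{O}_K$-submodule of $K^d$, and the inductive construction in that proof produces vectors $v_1, \ldots, v_k \in S$ (with $k \leq d$; the construction stops when all projected residual generators vanish) such that $\langle v_1, \ldots, v_i \rangle$ is a strictly increasing chain of subspaces and $S = \mathcal{O}_K v_1 + \ldots + \mathcal{O}_K v_k$. The strict flag condition forces $v_1, \ldots, v_k$ to be $K$-linearly independent, so they form the desired free generating set.

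For (2), the inclusion $S \subseteq \widetilde{S} \cap K^d$ is immediate, and for the converse the plan is to reduce to the finitely generated case and then transfer the representation across the extension using (1). Given $x \in \widetilde{S} \cap K^d$, write $x = \sum_{i=1}^n \tilde{\alpha}_i x_i$ with $x_i \in S$, $\tilde{\alpha}_i \in \mathcal{O}_{\widetilde{K}}$ and $\sum \tilde{\alpha}_i = 1$. Translating by $-x_1$, I may assume $x_1 = 0 \in S$. I would then set $S' := \conv(\{0, x_2, \ldots, x_n\}) \subseteq S$, a finitely generated convex subset of $K^d$ containing $0$. By (1), $S' = \sum_{j=1}^k \mathcal{O}_K v_j$ for some $K$-linearly independent $v_1, \ldots, v_k \in K^d$.

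Since $S'$ is generated as an $\mathcal{O}_K$-module both by $\{v_j\}$ and by $\{x_2, \ldots, x_n\}$, the same $\mathcal{O}_K$-expressions (viewed inside $\mathcal{O}_{\widetilde{K}}$) show that these two sets generate the same $\mathcal{O}_{\widetilde{K}}$-submodule of $\widetilde{K}^d$. This submodule is convex by Proposition \ref{prop: conv iff subm} and contains $0$, so it coincides with $\conv_{\widetilde{K}^d}(\{0, x_2, \ldots, x_n\})$, and therefore contains $x = \sum_{i \geq 2} \tilde{\alpha}_i x_i$. Hence $x = \sum_{j=1}^k \tilde{c}_j v_j$ for some $\tilde{c}_j \in \mathcal{O}_{\widetilde{K}}$. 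Extending $v_1, \ldots, v_k$ to a $K$-basis of $K^d$ (which remains a $\widetilde{K}$-basis of $\widetilde{K}^d$), the unique expansion of $x \in K^d$ in this basis has coefficients in $K$, forcing $\tilde{c}_j \in K \cap \mathcal{O}_{\widetilde{K}} = \mathcal{O}_K$ (using that $\nu_{\widetilde{K}}$ restricts to $\nu_K$ on $K$). Thus $x \in S' \subseteq S$. The main delicate point is identifying $\conv_{\widetilde{K}^d}(S')$ with the free $\mathcal{O}_{\widetilde{K}}$-module $\sum_j \mathcal{O}_{\widetilde{K}} v_j$: the $K$-linear independence of the $v_j$ supplied by (1) is exactly what makes the uniqueness-of-coefficients argument work, pulling the expansion of $x$ from $\mathcal{O}_{\widetilde{K}}$ back down to $\mathcal{O}_K$.
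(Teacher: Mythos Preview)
Your argument is correct, and for part (2) it is essentially identical to the paper's: reduce to a finitely generated $S$ containing $0$, use (1) to obtain $K$-linearly independent $\mathcal{O}_K$-generators $v_1,\ldots,v_k$, extend to a $K$-basis of $K^d$ (which is then automatically a $\widetilde K$-basis of $\widetilde K^d$), and conclude via uniqueness of coordinates that an $\mathcal{O}_{\widetilde K}$-combination of the $v_j$ lying in $K^d$ must have its coefficients in $K\cap\mathcal{O}_{\widetilde K}=\mathcal{O}_K$.

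For part (1) the paper takes a more elementary and self-contained route, without appealing to Corollary~\ref{cor: class of fg modules}. Starting from any finite $\mathcal{O}_K$-generating set $v_1,\ldots,v_n$ of $S$ (Lemma~\ref{lem: fin gen}), if there is a non-trivial $K$-linear relation $\sum_i\alpha_iv_i=0$, one selects $i$ with $\nu(\alpha_i)$ minimal; then $v_i=\sum_{j\neq i}(-\alpha_j/\alpha_i)v_j$ with all $-\alpha_j/\alpha_i\in\mathcal{O}_K$, so $v_i$ is redundant as a generator. Iterating removes all dependencies. This isolates the single valuation-theoretic step that does the work, whereas your approach imports the full flag construction (Lemma~\ref{lem: val pres emb quotient} together with the inductive machinery behind Corollary~\ref{cor: class of fg modules}). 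Both are valid; the paper's argument is simply shorter and avoids a forward dependence on the heavier classification result.
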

\begin{proof}
	(1) By Lemma \ref{lem: fin gen}, $S$ is generated as an $\mathcal{O}$-module by some finite set $v_1, \ldots, v_n \in S$. Assume these vectors are not linearly independent, then $0 = \sum_{i \in [n]} \alpha_i v_i$ for some $\alpha_i \in K$ not all $0$. Let $i \in [n]$ be such that $\nu(\alpha_i) \leq \nu(\alpha_j)$ for all $j \in [n]$, in particular $\alpha_i \neq 0$. Then $v_i = \sum_{j \neq i} \frac{\alpha_j}{-\alpha_i}v_j$ and $\nu \left( \frac{\alpha_j}{-\alpha_i} \right) = \nu(\alpha_j) - \nu(\alpha_i) \geq 0$, hence $\frac{\alpha_j}{-\alpha_i} \in \mathcal{O}$ for all $j \neq i$, and  $S$ is still generated as an $\mathcal{O}$-module by the set $\{ v_j : j \neq i \}$. Repeating this finitely many times, we arrive at a linearly independent set of generators.
	
	(2) Since convexity is invariant under translates, we may assume $0 \in S$. Since every element in the convex hull of a set is in the convex
hull of some finite subset, we may also assume that $S$ is finitely generated as an $\mathcal{O}$-module, and by (1) let $v_1, \ldots, v_n \in S$ be a linearly independent (in the vector space $K^d$, so $n \leq d$) set of its generators. Let $v_{n+1}, \ldots, v_{d} \in K^d$ be so that $\{v_i : i \in [d] \}$ is a basis of $K^d$, and say $v_i = (v_{i,j} : j \in [d])$ with $v_{i,j} \in K$. Then the square matrix $A := (v_{i,j} : i,j \in [d]) \in  M_{d \times d}(K)$ is invertible, so $A^{-1} \in M_{d \times d}(K) \subseteq M_{d\times d}(\widetilde{K})$, so $A$ is also invertible in $M_{d\times d} (\widetilde{K})$, hence $\{v_i : i \in [d]\}$ are linearly independent vectors in $\widetilde{K}^d$ as well.
But now if $\sum_{i \in [n]} \alpha_i v_i = u$ for some $\alpha_i \in \widetilde{K}$ and $u \in K^d$, then necessarily $\alpha_i \in K$ for all $i$ (otherwise we would get a non-trivial linear combination of $v_1, \ldots, v_d$ in $\widetilde{K}^d$). In particular, any element of the $\mathcal{O}_{\widetilde{K}}$-module generated by $v_1, \ldots, v_n$ which is in $K^d$ already belongs to the $\mathcal{O}_{K}$-module generated by $v_1, \ldots, v_n$, hence $\widetilde{S} \cap K^d = S$.
\end{proof}

We can now demonstrate an (optimal) finite bound on the breadth of the family of convex sets over valued fields. In sharp contrast, over the reals there is no finite bound on the breadth already for convex subsets of $\mathbb{R}^2$ (for any $n$, a convex $n$-gon in $\mathbb{R}^2$ is the intersection of $n$ half-planes, but not the intersection of any fewer of them).
\begin{theorem}\label{thm: breadth d}
Let $K$ be a valued field and $d \geq 1$. Then the family $\Conv_{K^{d}}$ has breadth $d$. That is, any
nonempty intersection of finitely many convex subsets of $K^{d}$
is the intersection of at most $d$ of them.	
\end{theorem}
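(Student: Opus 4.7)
The theorem breaks into two inequalities: breadth $\le d$ and breadth $\ge d$. For the lower bound the natural witnesses are the coordinate strips $C_i := \{v \in K^d : \nu(v_i) \ge 0\}$ for $i \in [d]$; their intersection is $\mathcal{O}^d$, and dropping any single $C_i$ introduces vectors with $i$-th coordinate of negative valuation, so no proper subfamily suffices.

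The main content is the upper bound, for which the plan is to argue by contradiction using Radon's theorem. Suppose some nonempty intersection $T = \bigcap_{i=1}^{n} C_i$ cannot be written as $\bigcap_{i \in S} C_i$ for any $|S| \le d$, and pass to a minimal subfamily with intersection $T$; then each $C_i$ is essential (removing it strictly enlarges the intersection) and necessarily $n \ge d+1$. For each $i$ pick a witness $y_i \in \bigcap_{j \neq i} C_j \setminus C_i$, pick any $x_0 \in T$, and translate so that $x_0 = 0$. Applying Proposition \ref{prop: Radon} to the $d+2$ points $0, y_1, \ldots, y_{d+1} \in K^d$ shows that one of them lies in the convex hull of the other $d+1$. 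If that exceptional point is some $y_{i_0}$, all the remaining points belong to $C_{i_0}$ (since $0 \in T \subseteq C_{i_0}$, and $y_j \in C_{i_0}$ for every $j \neq i_0$), so convexity of $C_{i_0}$ yields $y_{i_0} \in C_{i_0}$, contradicting the choice of $y_{i_0}$.

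The step I expect to be the main obstacle is the other case, where $0$ is the Radon point, giving $0 = \sum_{i=1}^{d+1} \alpha_i y_i$ with $\alpha_i \in \mathcal{O}$ and $\sum_i \alpha_i = 1$; over $\mathbb{R}$ one would be stuck here and only obtain the weaker bound $d+1$. The nonarchimedean rescue is that $1 \notin \mathfrak{m}$ forces at least one $\alpha_{i_0} \in \mathcal{O}^{\times}$, so the identity
\[
y_{i_0} \;=\; \tfrac{1}{\alpha_{i_0}} \cdot 0 \;+\; \sum_{j \neq i_0} \tfrac{-\alpha_j}{\alpha_{i_0}}\, y_j
\]
is a genuine convex combination: all coefficients lie in $\mathcal{O}$ because $\alpha_{i_0}^{-1} \in \mathcal{O}$, and they sum to $\alpha_{i_0}^{-1}\bigl(1 - \sum_{j \neq i_0} \alpha_j\bigr) = 1$. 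Since every summand on the right lies in $C_{i_0}$, this again forces $y_{i_0} \in C_{i_0}$, the desired contradiction. Notably, this argument uses only the definition of convexity and Proposition \ref{prop: Radon}; the structure theorem of Section \ref{sec: O-modules classif} is not required for this direction.
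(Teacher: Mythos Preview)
Your upper-bound argument is correct and genuinely different from the paper's. The paper proceeds by induction on $d$: it reduces to the spherically complete case via Lemma~\ref{lem: field ext conv}, applies the structure theorem (Theorem~\ref{thm: main Omod pres}) to the intersection $C_1 \cap \ldots \cap C_n$ to obtain a flag $F_1 \subsetneq \ldots \subsetneq F_d$ and cuts $\Delta_1 \supseteq \ldots \supseteq \Delta_d$, identifies an index $i_d$ that alone pins down $\Delta_d$, and then uses the inductive hypothesis inside the hyperplane $F_{d-1}$ to find $i_1,\ldots,i_{d-1}$. Your argument bypasses all of this: it uses only Proposition~\ref{prop: Radon} and the observation that in a convex combination summing to $1$ some coefficient must be a unit, so the ``$0$ is the Radon point'' case can be converted back into an expression of some $y_{i_0}$ as a convex combination of points in $C_{i_0}$. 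This is shorter, avoids spherical completeness and the classification of $\mathcal{O}$-modules entirely, and makes transparent exactly where the nonarchimedean hypothesis enters (the invertibility of $\alpha_{i_0}$). The paper's route, by contrast, exhibits the structural reason (the flag decomposition) and ties the result to the module classification, but at the cost of heavier machinery.

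One small gap in your lower bound: the strips $C_i = \{v : \nu(v_i) \ge 0\}$ only witness breadth $\ge d$ when the valuation is nontrivial, since otherwise every $C_i = K^d$. The paper's example---the $d$ coordinate hyperplanes $\{v : v_i = 0\}$, whose intersection is $\{0\}$ while any $d-1$ of them meet in a line---works uniformly. You can simply swap in that example.
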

\begin{proof}

The family $\Conv_{K^{d}}$ cannot have breadth less than $d$
because the $d$ coordinate-aligned hyperplanes are convex, have common intersection
$\left\{ 0\right\} $, but any $d-1$ of them intersect in a line. 

We now show that $\Conv_{K^{d}}$ has breadth at most $d$, by induction on $d$. The case $d=1$ is clear by Example \ref{ex: some conv sets}(1) since for any two quasi-balls, they are either disjoint or one is contained in the other. For $d > 1$, assume $C_1, \ldots, C_{n} \in \Conv_{K^d}$ with $n \geq d$ are convex and satisfy $\bigcap_{i \in [n]}C_i \neq \emptyset$. Translating, we may assume $0 \in \bigcap_{i \in [n]}C_i$.

We may also assume that $K$ is spherically complete.  Indeed, let $\widetilde{K}$ be a spherical completion of $K$ as in Fact \ref{fac: sph compl exists}, and let $\widetilde{C}_i := \conv_{\widetilde{K}^d}(C_i) \in \Conv_{\widetilde{K}^d}$. By Lemma \ref{lem: field ext conv}(2), $\widetilde{C}_i \cap K^d = C_i$ for each $i \in [n]$. Hence $\bigcap_{i \in [n]}\widetilde{C}_i \neq \emptyset$, and if $\bigcap_{i \in [n]}\widetilde{C}_i = \bigcap_{i \in S}\widetilde{C}_i$ for some $S \subseteq [n]$ with $|S| \leq d$, then also $\bigcap_{i \in [n]}C_i = \bigcap_{i \in S}C_i$.

Then let the vector subspaces $\left\{ 0\right\} \subsetneq F_{1}\subsetneq \ldots \subsetneq F_{d}=K^{d}$
and the upwards closed subsets $\Delta_{1}\supseteq\Delta_{2}\supseteq \ldots \supseteq\Delta_{d}$  of $\Gamma_{\infty}$
be as given by Theorem \ref{thm: main Omod pres} for the convex set $C := C_{1}\cap \ldots \cap C_{n}$.
By Remark \ref{rem: MainThmDeltaD} we have 
$$\Delta_{d}=\left\{ \gamma\in\Gamma_{\infty}\mid \forall v \in K^d, \  \nu\left(v\right)=\gamma\implies v\in C_{1}\cap \ldots \cap C_{n}\right\}. $$
It follows that there is some $i_{d}\in\left[n\right]$ such that in fact
\begin{gather}\label{eq: breadth 1}
 \Delta_{d}=\left\{ \gamma\in\Gamma_{\infty}\mid\ \forall v \in K^d, \  \nu\left(v\right)=\gamma\implies v\in C_{i_{d}}\right\} 
\end{gather}
(since these are finitely many upwards closed sets in $\Gamma$, their intersection is already given by one of them).

Let $\left\{ 0\right\} \subsetneq F'_{1}\subsetneq \ldots \subsetneq F'_{d}=K^{d}$
and $\Delta'_{1}\supseteq\Delta_{2}'\supseteq \ldots \supseteq\Delta'_{d}$ 
be as given by Theorem \ref{thm: main Omod pres} for 
$C_{i_d}$. By Remark \ref{rem: choosing Fd-1}(1), $F'_{d-1}$ is a linear hyperplane so that every element of $C_{i_d}$ differs from an element of $F'_{d-1} \cap C_{i_d}$ by a vector with valuation in $\Delta_{d}'$. As $\Delta_d = \Delta'_d$ by \eqref{eq: breadth 1} and $C \subseteq C_{i_d}$,  by Remark \ref{rem: choosing Fd-1}(1) we may assume that $F_{d-1} = F'_{d-1}$, hence every element in $C_{i_d}$
differs from an element of $F_{d-1} \cap C_{i_d}$ by a vector with valuation in $\Delta_{d}$.

Consider $C \cap F_{d-1} = C_{1}\cap \ldots \cap C_{n}\cap F_{d-1}=\left(C_{1}\cap F_{d-1}\right)\cap \ldots \cap\left(C_{n}\cap F_{d-1}\right)$. Note that each $C_i \cap F_{d-1}$ is a convex subset of $F_{d-1} \cong K^{d-1}$, so by induction hypothesis there exist $i_{1}, \ldots, i_{d-1}\in\left[n\right]$
such that 
\begin{gather}\label{eq: breadth 2}
	C_{i_{1}}\cap \ldots \cap C_{i_{d-1}}\cap F_{d-1}=C_{1}\cap \ldots \cap C_{n}\cap F_{d-1}= C \cap F_{d-1}. 
\end{gather}

Let $x \in C_{i_1} \cap \ldots \cap C_{i_d}$ be arbitrary. As $x \in C_{i_d}$, by the choice of $F_{d-1}$, $x = w + v_d$ for some $w \in F_{d-1}$ and $v_d \in K^d$ with $\nu(v_d) \in \Delta_d$. 
By the choice of $\Delta_d$ we have in particular $v_d \in C_{i_1} \cap \ldots \cap C_{i_d}$. And as each $C_i$ is a module, it follows that also $w \in C_{i_1} \cap \ldots \cap C_{i_d}$.
Combining this with \eqref{eq: breadth 2} and using Remark \ref{rem: C intersec Fj} (for $j = d-1$)
 we thus have
\begin{gather*}
	C_{i_{1}}\cap \ldots \cap C_{i_{d}}=\left\{ w+v_{d}\mid w\in C_{i_{1}}\cap \ldots \cap C_{i_{d}}\cap F_{d-1},\;\nu\left(v_{d}\right)\in\Delta_{d}\right\} =\\
	\left\{ w+v_{d}\mid w\in C \cap F_{d-1},\;\nu\left(v_{d}\right)\in\Delta_{d}\right\} =\\
	\left\{ v_{1}+ \ldots +v_{d}\mid v_{i}\in F_{i},\;\nu\left(v_{i}\right)\in\Delta_{i}\right\}=\\
	C_{1}\cap \ldots \cap C_{n}.\qedhere
\end{gather*}
\end{proof}

\begin{defn}\label{def: Helly num}
\begin{enumerate}
	\item A family of sets $\mathcal{F} \subseteq \mathcal{P}\left(X\right)$ has \emph{Helly number}
$k \in \mathbb{N}_{\geq 1}$ if given any $n \in \mathbb{N}$ and any sets $S_{1}, \ldots, S_{n}\in \mathcal{F}$, if every
$k$-subset of $\left\{ S_{1}, \ldots, S_{n}\right\} $ has nonempty intersection,
then $\bigcap_{i \in [n]}S_{i}\neq\emptyset$.
\item The \emph{Helly number
of $\mathcal{F}$} refers to the minimal $k$ with this property (or $\infty$ if it does not exist).
 \item We say
that $\mathcal{F}$ has the \emph{Helly property} if it has a finite Helly number.
\end{enumerate}
\end{defn}

\begin{theorem}\label{thm: Helly number}
Let $K$ be a valued field and $d \geq 1$. Then the  Helly number of $\Conv_{K^{d}}$ is $d+1$.	
\end{theorem}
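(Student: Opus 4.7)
The plan is to prove the bound in two directions using results already established. For the lower bound, I need to exhibit $d+1$ convex sets with the ``Helly-critical'' intersection pattern; for the upper bound, I plan to combine the Helly hypothesis with the breadth bound from Theorem \ref{thm: breadth d}.

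For the lower bound (Helly number at least $d+1$), I would take the $d$ coordinate hyperplanes $H_i := \{x \in K^d : x_i = 0\}$ for $i \in [d]$ together with the affine hyperplane $H_{d+1} := \{x \in K^d : x_1 + \ldots + x_d = 1\}$. Each $H_i$ is an affine subspace of $K^d$, hence convex. Their total intersection is empty (a point with all coordinates zero cannot have coordinates summing to $1$), but omitting $H_j$ for $j \in [d]$ leaves the single point $e_j$, while omitting $H_{d+1}$ leaves $0$. Thus every $d$-subset intersects while the full family has empty intersection, ruling out Helly number $\leq d$.

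For the upper bound, suppose $S_1, \ldots, S_n \in \Conv_{K^d}$ are such that every $(d+1)$-subset has non-empty intersection, and suppose for contradiction that $\bigcap_{i \in [n]} S_i = \emptyset$. Let $m$ be minimal such that some $m$-subset has empty intersection; then $m \geq d+2$, and after reindexing I may assume $S_1 \cap \ldots \cap S_m = \emptyset$ while $T := S_1 \cap \ldots \cap S_{m-1} \neq \emptyset$. Applying Theorem \ref{thm: breadth d} to $T$ yields some $J \subseteq [m-1]$ with $|J| \leq d$ and $\bigcap_{j \in J} S_j = T$. Then
\[ \bigcap_{j \in J \cup \{m\}} S_j \;=\; T \cap S_m \;=\; S_1 \cap \ldots \cap S_m \;=\; \emptyset, \]
exhibiting at most $d+1$ of the $S_i$'s with empty intersection (padding $J \cup \{m\}$ arbitrarily to size $d+1$ if necessary), contradicting the hypothesis.

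I do not anticipate a serious obstacle: the heavy lifting was done in Theorem \ref{thm: breadth d}, and this deduction from breadth to Helly is straightforward. Notably, the argument is cleaner than the classical real case (which goes through Radon's theorem, here Proposition \ref{prop: Radon}), because finite breadth is a strictly stronger structural property than a finite Helly number.
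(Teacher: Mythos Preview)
Your argument is correct, but the upper bound takes a genuinely different route from the paper's. The paper proceeds via the classical Radon-to-Helly deduction: it does an induction on $n$, with base case $n=d+2$ handled directly by Proposition~\ref{prop: Radon} (choose $x_i \in \bigcap_{j\neq i} S_j$, use Radon to get some $x_{i^*}$ in the convex hull of the others, and check $x_{i^*}\in\bigcap_i S_i$), and inductive step by replacing $S_{n-1},S_n$ with $S_{n-1}\cap S_n$. Your approach instead leverages Theorem~\ref{thm: breadth d}: a minimal empty subfamily of size $m\geq d+2$ has its first $m-1$ sets collapsing to an intersection of at most $d$ of them by breadth, yielding $\leq d+1$ sets with empty intersection. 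The trade-off is that the paper's proof is more elementary and self-contained --- Proposition~\ref{prop: Radon} is a two-line linear-algebra argument, whereas Theorem~\ref{thm: breadth d} rests on the module classification (Theorem~\ref{thm: main Omod pres}) and passage to a spherical completion --- while your proof is shorter once that machinery is in place and makes transparent that finite breadth is strictly stronger than a finite Helly number. Your lower bound via the coordinate hyperplanes together with $\{\sum x_i = 1\}$ is a minor variant of the paper's (which uses the affine spans of the facets of $\{0,e_1,\ldots,e_d\}$); both are standard.
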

\begin{proof}
The Helly number is bounded by the Radon number minus $1$ in an arbitrary convexity space (see Section \ref{sec: convexity spaces}), but we include a proof for completeness. 
	Let $n$ be arbitrary, and  let $S_{1}, \ldots, S_{n}\subseteq K^{d}$ be convex sets so that any $d+1$ of
them have a non-empty intersection. We will show by induction on $n$ that $S_{1}\cap \ldots \cap S_{n}\neq\emptyset$.

\noindent \emph{Base case: $n=d+2$.} 

\noindent By assumption for each $i \in [d+2]$ there exists some $x_i \in K^d$ so that 
$x_{i}\in\bigcap_{j \in [d+2] \setminus \{i\}}S_{j}$.
By Proposition \ref{prop: Radon} there exists some $i^* \in [d+2]$ so that $x_{i^*} \in \conv\left(\left\{ x_{i}\mid i \neq i^*\right\} \right)$. By the choice of the $x_i$'s we have $x_{i^*} \in S_i$ for all $i \neq i^*$. We also have $x_i \in S_{i^*}$ for all $i \neq i^*$, $S_{i^*}$ is convex and $x_{i^*} \in \conv\left(\left\{ x_{i}\mid i\neq i^*\right\} \right)$, hence $x_{i^*} \in S_{i^*}$. Thus $x_{i^*} \in \bigcap_{i \in [d+2]} S_i$, as wanted.

\noindent \emph{Inductive step: $n>d+2$.} 

\noindent Let $\widetilde{S}_{n-1}:=S_{n-1}\cap S_{n}$, in particular 
$\widetilde{S}_{n-1}$ is convex.
By induction hypothesis, any $n-1$ sets from $\left\{ S_{1}, \ldots,S_{n} \right\}$ have a non-empty intersection. Hence any $n-2$ sets from $\left\{ S_{1}, \ldots ,S_{n-2},\widetilde{S}_{n-1} \right\}$
have a non-empty intersection. As $n-2\geq d+1$ by assumption, applying the induction hypothesis again we get 
$$S_{1}\cap \ldots \cap S_{n}=S_{1}\cap \ldots \cap S_{n-2}\cap\widetilde{S}_{n-1}\neq\emptyset.$$
This completes the induction, and shows that $\Conv_{K^{d}}$
has Helly number $d+1$.

It remains to show that $\Conv_{K^{d}}$
does not have Helly number $d$.
 Let  $e_{i}\in K^{d}$ be the $i$th standard basis vector. In particular the set 
$E := \left\{ 0,e_{1}, \ldots , e_{d}\right\} $ is affinely independent, hence 
the intersection of the affine spans of its $d+1$ maximal proper subsets
is empty.
The convex hull of a subset of $K^{d}$ is contained in its affine
hull, hence the intersection of the $d+1$ convex hulls of
its maximal proper subsets is also empty. 
But for any $d$ among the $(d+1)$ maximal proper subsets of $E$, some element of $E$ belongs to their intersection, and hence in particular the intersection of their convex hulls is non-empty. \end{proof}

We recall some terminology around the \emph{Vapnik-Chervonenkis dimension} (and refer to \cite[Sections 1 and 2]{aschenbrenner2016vapnik} for further details).

\begin{defn}\label{def: vc-dim and co}
Let $\mathcal{F} \subseteq \mathcal{P}(X)$ be a family of subsets of $X$.
	\begin{enumerate}
	\item For a subset $Y \subseteq X$, we let $\mathcal{F} \cap Y := \{ S \cap Y : S \in Y \} \subseteq \mathcal{P}(Y)$.
		\item We say that $\mathcal{F}$ \emph{shatters} a
subset $Y\subseteq X$ if $\mathcal{F} \cap Y = \mathcal{P}(Y)$. 

\item The \emph{VC-dimension} of $\mathcal{F}$, or $\VC(\mathcal{F})$, 
is the largest $k\in\mathbb{N}$ (if one exists) such that $\mathcal{F}$
shatters some subset of $X$ size $k$. If $\mathcal{F}$ shatters arbitrarily large
finite subsets of $X$, then it is said to have infinite VC-dimension.
\item The \emph{dual family} $\mathcal{F}^{*}\subseteq \mathcal{P}\left( \mathcal{F} \right)$
is given by $\mathcal{F}^{*} = \left\{ S_{x}\mid x\in X\right\} $, where
$S_{x}=\left\{ A\in \mathcal{F} \mid x\in A\right\} $. 

\item The \emph{dual VC-dimension} of $\mathcal{F}$, or $\VC^{*}(\mathcal{F})$, is the VC-dimension
of $\mathcal{F}^{*}$. Equivalently, it is the largest $k\in\mathbb{N}$ (or $\infty$ if no such $k$ exists) 
such that there are sets $S_{1}, \ldots, S_{k}\in \mathcal{F}$ that generate a Boolean algebra with $2^k$ atoms (i.e.~for any distinct $I,J \subseteq[k]$, $\bigcap_{i\in I}S_i \cap \bigcap_{i\in [k] \setminus I} \left( X \setminus S_i \right) \neq \bigcap_{i\in J}S_i \cap \bigcap_{i\in [k] \setminus J} \left( X \setminus S_i\right) $).
\item The \emph{shatter function} $\pi_{\mathcal{F}}: \mathbb{N} \to \mathbb{N}$ of $\mathcal{F}$ is
$$\pi_{\mathcal{F}}(n) := \max \left\{ |\mathcal{F} \cap Y| : Y \subseteq X, |Y|  = n \right\}.$$
\item By the Sauer-Shelah lemma (see e.g.~\cite[Lemma 2.1]{aschenbrenner2016vapnik}, if $\VC(\mathcal{F}) \leq d$, then $\pi_{\mathcal{F}}(n) \leq \left(\frac{e}{d}\right)^d n^d$ for all $n\geq d$ (and $\pi_{\mathcal{F}}(n) = 2^n$ for all $n$ if $\VC(\mathcal{F}) = \infty$).

\item The \emph{VC-density} of $\mathcal{F}$, or $\vc(\mathcal{F})$, is the infimum of all $r \in \mathbb{R}_{>0}$ so that $\pi_{\mathcal{F}}(n) = O(n^r)$, and $\infty$ if there is no such $r$. (In particular $\vc(\mathcal{F}) \leq \VC( \mathcal{F})$.)
\item Finally, we define the \emph{dual shatter function} $\pi^{*}_{\mathcal{F}} := \pi_{\mathcal{F}^{*}}$ and the \emph{dual $\VC$-density} $\vc^{*}(\mathcal{F}) := \vc(\mathcal{F}^{*})$ of the family $\mathcal{F}$.
	\end{enumerate}
\end{defn}

\begin{remark}\label{rem: VC dim of subfamily}
	Note that if $\mathcal{F} \subseteq \mathcal{P}(X)$ and $Y \subseteq X$, then $\VC(\mathcal{F} \cap Y) \leq \VC(\mathcal{F})$ and $\VC^*(\mathcal{F} \cap Y) \leq \VC^*(\mathcal{F})$.
\end{remark}

The following results is in stark contrast with the situation for the family of convex sets over the reals, where already the family of convex subsets of $\mathbb{R}^2$ has infinite VC-dimension (e.g.,~any set of points on a circle is shattered by the family of convex hulls of its subsets).

\begin{theorem}\label{thm: VC dim}
	Let $K$ be a valued field and $d \geq 1$. Then the family $\Conv_{K^{d}}$ has VC-dimension $d+1$.
\end{theorem}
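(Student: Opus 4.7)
The plan is to prove both an upper and a lower bound for $\VC(\Conv_{K^d})$. For the lower bound $\VC(\Conv_{K^d}) \geq d+1$, I will exhibit a shattered set of size $d+1$. A natural choice is the set $E := \{0, e_1, \ldots, e_d\}$, where $e_1, \ldots, e_d$ is the standard basis of $K^d$; these $d+1$ points are affinely independent. For an arbitrary subset $S \subseteq E$, take $C_S := \conv(S)$, which is convex. Clearly $S \subseteq C_S \cap E$; the reverse inclusion follows from $C_S \subseteq \aff(S)$ together with affine independence of $E$, since $\aff(S)$ contains no element of $E \setminus S$. Hence $\mathcal{P}(E)$ is realized as $\Conv_{K^d} \cap E$, giving $\VC(\Conv_{K^d}) \geq d+1$.

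For the upper bound $\VC(\Conv_{K^d}) \leq d+1$, I would argue from the strong form of Radon's theorem (Proposition \ref{prop: Radon}) established earlier in the paper. Given any $d+2$ points $x_1, \ldots, x_{d+2} \in K^d$, there exists some index $i^*$ such that $x_{i^*} \in \conv(\{x_j : j \neq i^*\})$. Suppose for contradiction that this set of $d+2$ points is shattered by $\Conv_{K^d}$. Then there is a convex set $C \in \Conv_{K^d}$ realizing the subset $\{x_j : j \neq i^*\}$, i.e.~$C \cap \{x_1, \ldots, x_{d+2}\} = \{x_j : j \neq i^*\}$. But then $C$ contains all $x_j$ with $j \neq i^*$, and since $C$ is convex, it also contains $\conv(\{x_j : j \neq i^*\}) \ni x_{i^*}$. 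This contradicts $x_{i^*} \notin C$. Hence no set of size $d+2$ is shattered, so $\VC(\Conv_{K^d}) \leq d+1$.

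Neither direction presents a genuine obstacle: the lower bound is essentially an exercise using the Example \ref{ex: some conv sets}(2) perspective on $\conv$, and the upper bound is a clean one-line consequence of the strong Radon property already in hand. The main thing to be careful about is the lower bound argument --- one should verify that $\conv(\emptyset) = \emptyset$ is itself an admissible convex set (which it is, vacuously from Definition \ref{def: convex set}) so that the empty subset of $E$ is also realized.
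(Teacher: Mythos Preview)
Your proposal is correct and follows essentially the same approach as the paper: the lower bound via shattering $E=\{0,e_1,\ldots,e_d\}$ using $\conv(S)\subseteq\aff(S)$ and affine independence, and the upper bound as a direct consequence of the strong Radon property (Proposition~\ref{prop: Radon}). Your extra remark about $\conv(\emptyset)=\emptyset$ being convex is a harmless clarification not spelled out in the paper.
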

\begin{proof}
We have $\VC \left(\Conv_{K^{d}} \right) \geq d+1$ since the set	 $E:=\left\{ 0,e_{1},\ldots,e_{d}\right\} \subseteq K^d$, with $e_i$ the $i$th vector of the standard basis, is shattered by $\Conv_{K^{d}}$. Indeed,
the convex hull of any subset is contained in its affine span, and for any $S \subseteq E$, $\aff(S)$ does not contain any of the points in $E \setminus S$. 

On the other hand, $\VC \left(\Conv_{K^{d}} \right) \leq d+1$ as no 
subset $Y$ of $K^d$ with $|Y| \geq d+2$ can be shattered by $\Conv_{K^{d}}$. Indeed, by  Proposition \ref{prop: Radon}, at least one of the points of $Y$ belongs to every convex set containing all the other points of $Y$.
\end{proof}

The dual VC-dimension of a family of sets is bounded by its breadth.
\begin{fact}\cite[Lemma 2.9]{aschenbrenner2016vapnik} \label{fac: breadth dual VC}
	Let $\mathcal{F} \subseteq \mathcal{P}(X)$ be a family of subsets of $X$ of breadth at most $d$. Then also $\VC^{*}(\mathcal{F}) \leq d$.
\end{fact}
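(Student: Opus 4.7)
The plan is to prove the contrapositive by unwinding both definitions. Suppose for contradiction that $\mathcal{F}$ has breadth at most $d$ and yet $\VC^{*}(\mathcal{F}) \geq d+1$. Then there exist sets $S_{1}, \ldots, S_{d+1} \in \mathcal{F}$ generating a Boolean algebra with $2^{d+1}$ atoms; equivalently, for every $I \subseteq [d+1]$ the cell
$$A_{I} := \bigcap_{i \in I} S_{i} \cap \bigcap_{i \in [d+1] \setminus I} (X \setminus S_{i})$$
is non-empty.

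First I would look at the total intersection $T := \bigcap_{i \in [d+1]} S_{i}$, which is non-empty since it contains $A_{[d+1]}$. Because $\mathcal{F}$ has breadth at most $d$, there must be some $I \subseteq [d+1]$ with $|I| \leq d$ such that $T = \bigcap_{i \in I} S_{i}$. Fix any $j \in [d+1] \setminus I$; since $I \subseteq [d+1] \setminus \{j\}$, we obtain
$$\bigcap_{i \in [d+1] \setminus \{j\}} S_{i} \subseteq \bigcap_{i \in I} S_{i} = T \subseteq S_{j},$$
which forces $A_{[d+1] \setminus \{j\}} = \bigcap_{i \neq j} S_{i} \cap (X \setminus S_{j}) = \emptyset$, contradicting the choice of the $S_{i}$.

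The argument is essentially a one-shot unwinding of definitions and has no real obstacle; the only care needed is to read ``Boolean algebra with $2^{k}$ atoms'' as the assertion that all $2^{k}$ cells are non-empty (this is the standard convention matching the definition of the dual VC-dimension as $\VC(\mathcal{F}^{*})$), rather than merely that they are pairwise distinct. Once that interpretation is fixed, the proof is the short calculation above.
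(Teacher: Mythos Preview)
Your proof is correct. Note that the paper does not actually prove this statement: it is quoted as a Fact with a citation to \cite[Lemma 2.9]{aschenbrenner2016vapnik}, so there is no ``paper's own proof'' to compare against. Your argument is the standard one-line proof of this lemma: from $d+1$ sets with all $2^{d+1}$ atoms non-empty, the full intersection $T = \bigcap_{i \in [d+1]} S_i = A_{[d+1]}$ is non-empty, breadth $\leq d$ collapses it to some proper sub-intersection $\bigcap_{i \in I} S_i$, and then for any $j \notin I$ the atom $A_{[d+1] \setminus \{j\}}$ is forced to be empty.

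Your caveat about the interpretation of ``Boolean algebra with $2^k$ atoms'' is well taken and worth making explicit. The paper's parenthetical gloss (that the cells are pairwise \emph{distinct}) is slightly weaker on its face, since it would allow a single empty cell; but the atoms of the generated Boolean algebra are precisely the non-empty cells, so having $2^k$ atoms literally means all cells are non-empty, which is also what $\VC(\mathcal{F}^*) \geq k$ asserts via shattering. Your reading is the correct one, and with it the argument goes through without issue. (Under the weaker reading one would additionally need to handle the case $A_{[d+1]} = \emptyset$, where breadth says nothing directly about $T$; this is avoidable, but your remark shows you were aware of the point.)
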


Using it, we get the following:
\begin{theorem}\label{thm: dual VC-dim}
 For any valued field $K$ and $d \geq 1$, the family $\Conv_{K^{d}}$ has dual VC-dimension $d$.
\end{theorem}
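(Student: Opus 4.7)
The plan is to prove the two bounds $\VC^*(\Conv_{K^d}) \leq d$ and $\VC^*(\Conv_{K^d}) \geq d$ separately, with the upper bound coming for free from earlier machinery and the lower bound exhibited by an explicit configuration of coordinate hyperplanes.

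For the upper bound, I would simply invoke the breadth computation already available: by Theorem \ref{thm: breadth d}, $\Conv_{K^d}$ has breadth $d$, and Fact \ref{fac: breadth dual VC} tells us that breadth $\leq d$ implies $\VC^* \leq d$. So the upper bound is a one-line consequence of previously established results and requires no new work.

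For the lower bound, the plan is to produce $d$ convex subsets $S_1, \ldots, S_d$ of $K^d$ all of whose $2^d$ Boolean atoms are non-empty. The natural candidates are the coordinate hyperplanes: for each $i \in [d]$, set $S_i := \{x \in K^d : x_i = 0\}$. Each $S_i$ is a linear subspace of $K^d$, hence in particular an $\mathcal{O}$-submodule, hence convex by Proposition \ref{prop: conv iff subm}. For any $I \subseteq [d]$, the atom
\[
A_I := \bigcap_{i \in I} S_i \cap \bigcap_{i \in [d] \setminus I} \bigl(K^d \setminus S_i\bigr) = \{ x \in K^d : x_i = 0 \text{ iff } i \in I\}
\]
is non-empty, as witnessed by the vector whose $i$-th coordinate is $0$ for $i \in I$ and $1$ otherwise. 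Since distinct $I, J \subseteq [d]$ give disjoint atoms (any $i$ in the symmetric difference separates them) and both atoms are non-empty, they are distinct as sets, and so $S_1, \ldots, S_d$ witness $\VC^*(\Conv_{K^d}) \geq d$ via the equivalent formulation in Definition \ref{def: vc-dim and co}(5).

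There is no serious obstacle here — the whole point is that the heavy lifting was done in proving Theorem \ref{thm: breadth d}. The only thing to be a little careful about is the Boolean-algebra formulation of dual VC-dimension, but since the atoms $A_I$ for distinct $I$ are pairwise disjoint, non-emptiness of each $A_I$ is equivalent to having all $2^d$ atoms distinct, so the construction suffices.
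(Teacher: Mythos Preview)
Your proposal is correct and matches the paper's own proof essentially line for line: the paper also invokes Theorem~\ref{thm: breadth d} together with Fact~\ref{fac: breadth dual VC} for the upper bound, and uses the $d$ coordinate-aligned hyperplanes generating a Boolean algebra with $2^d$ atoms for the lower bound. You have simply written out the verification of the atoms in slightly more detail than the paper does.
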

\begin{proof}
The dual VC-dimension of $\Conv_{K^{d}}$ is at least
$d$ because the $d$ coordinate-aligned (convex) hyperplanes in $K^d$
generate a Boolean algebra with $2^{d}$ atoms. 

Conversely, the breadth of $\Conv_{K^{d}}$ is $d$ by Theorem \ref{thm: breadth d}, hence by Fact \ref{fac: breadth dual VC} its dual VC-dimension is also at most $d$.
\end{proof}

\begin{defn}\label{def: frac Helly numb}
\begin{enumerate}
	\item A family of sets $\mathcal{F} \subseteq \mathcal{P}(X)$ has \emph{fractional Helly number} $k \in \mathbb{N}_{\geq 1}$ if for every $\alpha \in \mathbb{R}_{>0}$ there exists $\beta \in \mathbb{R}_{>0}$ so that: for any $n \in \mathbb{N}$ and any sets $S_{1}, \ldots, S_{n}\in \mathcal{F}$ (possibly with repetitions), 
if there are $\geq\alpha{n \choose k}$ $k$-element subsets of the multiset $\left\{ S_{1}, \ldots, S_{n}\right\} $
with a non-empty intersection, then there are $\geq\beta n$ sets from
$\left\{ S_{1}, \ldots, S_{n}\right\} $ with a non-empty intersection. 
\item The \emph{fractional Helly number} of $\mathcal{F}$ refers to the minimal
$k$ with this property. Say that $\mathcal{F}$ has the \emph{fractional Helly
property} if it has a fractional Helly number.
\end{enumerate}
\end{defn} 

Note that any finite family of sets trivially has fractional Helly number $1$ by choosing $\beta$ sufficiently small with respect to the size of $\mathcal{F}$. We will use the following theorem of Matou\v{s}ek.
\begin{fact}\cite[Theorem 2]{matousek2004bounded}\label{fac: Matousek frac Helly}
Let $\mathcal{F} \subseteq \mathcal{P}(X)$ be a set system whose dual shatter function satisfies $\pi^*_{\mathcal{F}}(n) = o(n^k)$, i.e.~$\lim_{n \to \infty} \pi^*_{\mathcal{F}}(n) / n^k = 0$, where $k$ is a fixed integer. Then $\mathcal{F}$ has fractional Helly number $k$.
\end{fact}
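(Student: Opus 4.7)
The plan is a double-counting argument via ``types'' of points with respect to the $n$ chosen sets, exploiting that the dual shatter function directly bounds the number of distinct types.

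Fix $\alpha > 0$ and sets $S_{1}, \ldots, S_{n} \in \mathcal{F}$ for which at least $\alpha\binom{n}{k}$ of the $k$-subfamilies have non-empty intersection. For each $p \in X$, set $T_{p} := \{ i \in [n] : p \in S_{i} \} \subseteq [n]$; the collection $\mathcal{T}$ of realized types satisfies $|\mathcal{T}| \leq \pi^{*}_{\mathcal{F}}(n)$ directly from the definition of the dual shatter function (Definition \ref{def: vc-dim and co}). For each intersecting $k$-subfamily I would pick a witness point in the intersection; its type contains the indices of the subfamily. Summing the contributions over types yields the counting inequality
\[
\alpha \binom{n}{k} \;\leq\; \sum_{T \in \mathcal{T}} \binom{|T|}{k} \;\leq\; \pi^{*}_{\mathcal{F}}(n) \cdot \binom{t^{*}}{k},
\]
where $t^{*} := \max_{T \in \mathcal{T}} |T|$. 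A point realizing a maximum-size type then lies in $t^{*}$ of the $S_{i}$'s, so the task reduces to producing a constant $\beta = \beta(\alpha) > 0$ with $t^{*} \geq \beta n$ for all sufficiently large $n$.

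The hard part will be upgrading this to a \emph{linear} lower bound on $t^{*}$. Rearranging the display gives only $(t^{*}/n)^{k} \geq \alpha/\pi^{*}_{\mathcal{F}}(n)$, which is sublinear as soon as $\pi^{*}_{\mathcal{F}}(n) \to \infty$ --- and the hypothesis $\pi^{*}_{\mathcal{F}}(n) = o(n^{k})$ permits arbitrary sub-$n^{k}$ growth. To bridge this gap I would first invoke the Sauer--Shelah dichotomy: the hypothesis rules out $\pi^{*}_{\mathcal{F}}(n) = 2^{n}$ and hence forces $\mathcal{F}$ to have finite dual VC-dimension. This makes available the weak $\varepsilon$-net theorem of Haussler--Welzl and the related fractional Helly-type arguments for set systems of bounded dual VC-dimension; combined with a moment-style refinement of the crude uniform bound $\binom{t^{*}}{k}$ (or an iteration of the counting on suitable sub-systems of controlled dual VC-density), one extracts a type of size at least $\beta n$. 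This extraction is the delicate core of Matou\v{s}ek's argument in \cite{matousek2004bounded}, and is where the full strength of the dual shatter function hypothesis is used beyond the elementary pigeonhole above. Once such a type is produced, a witness point for it lies in $\beta n$ of the $S_{i}$'s, yielding fractional Helly number $k$ with the desired constant.
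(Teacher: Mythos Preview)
The paper does not give its own proof of this statement --- it is quoted as an external fact from \cite{matousek2004bounded}, with Remark~\ref{rem: frac helly dependence} merely recording one quantitative feature of Matou\v{s}ek's argument (the choice of a fixed sample size $m$ with $\pi^{*}_{\mathcal{F}}(m) < \tfrac{1}{4}\alpha\binom{m}{k}$, yielding $\beta = 1/(2m)$). So there is nothing in the paper to compare against beyond that hint.

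On the substance of your proposal: the opening double-count is fine and correctly yields $\alpha\binom{n}{k} \leq \pi^{*}_{\mathcal{F}}(n)\binom{t^{*}}{k}$, and you correctly diagnose that this alone gives only $t^{*} \geq c\,n/\pi^{*}_{\mathcal{F}}(n)^{1/k}$, which is sublinear. The gap is that you do not actually close it. Your invocation of Haussler--Welzl weak $\varepsilon$-nets is a wrong turn: $\varepsilon$-nets for bounded-VC families go in the opposite direction in this circle of ideas (fractional Helly is an \emph{input} to $(p,q)$-type theorems, not something one derives from $\varepsilon$-nets), and nothing of that sort appears in Matou\v{s}ek's proof. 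The phrases ``moment-style refinement'' and ``iteration of the counting on suitable sub-systems'' gesture vaguely in a plausible direction but do not constitute an argument.

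What is actually needed --- and what Remark~\ref{rem: frac helly dependence} is pointing at --- is a single averaging step over random $m$-element subsets $R \subseteq [n]$ for a \emph{fixed} $m$ chosen once and for all so that $\pi^{*}_{\mathcal{F}}(m) < \tfrac{\alpha}{4}\binom{m}{k}$ (possible precisely because $\pi^{*}_{\mathcal{F}}(m) = o(m^{k})$). One then compares the expected number of intersecting $k$-tuples inside $R$ (at least roughly $\tfrac{\alpha}{2}\binom{m}{k}$) against the upper bound $\sum_{T}\binom{|T\cap R|}{k}$ over the at most $\pi^{*}_{\mathcal{F}}(m)$ types restricted to $R$; the mismatch forces some type $T$ to satisfy $|T| \geq n/(2m)$. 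The entire strength of the $o(n^{k})$ hypothesis is spent in choosing this single $m$, not in any asymptotic estimate as $n\to\infty$. Until you supply this (or an equivalent) step, the proposal is a setup plus a deferral, not a proof.
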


\begin{remark}\label{rem: frac helly dependence}
Moreover, if $\VC^*(\mathcal{F}) = d < \infty$, then the fractional Helly number is $\leq d+1$, and the $\beta$ witnessing this can be chosen depending only on $d$ and $\alpha$ (and not on the family $\mathcal{F}$).

	Indeed, by Definition \ref{def: vc-dim and co}, if $\VC^*(\mathcal{F}) \leq d$, then $\pi^*_{\mathcal{F}}(n) \leq \left(\frac{e}{d}\right)^d n^d$ for all $n \geq d$, hence $\pi^*_{\mathcal{F}}(n) \leq c n^d$ for all $n \in \mathbb{N}$, where $c = c(d) := \left(\frac{e}{d}\right)^d + 2^d$.
	In particular we can choose $m = m(d,\alpha)$ so that $\pi^*_{\mathcal{F}}(m) < \frac{1}{4} \alpha {m \choose d+1}$. Then it follows from the proof of \cite[Theorem 2]{matousek2004bounded} that  $\beta = \frac{1}{2m}$ works for all $n \geq \frac{m}{\beta} = 2m^2$, and trivially $\beta = \frac{1}{2m^2}$ works for all $n \leq 2m^2$, hence $\beta := \beta(\alpha,d) :=\frac{1}{2m^2}$ works for all $n \in \mathbb{N}$.	
\end{remark}

Using this, we get the following:
\begin{theorem}\label{thm: fractional Helly number}
	If $K$ is a valued field, $d \geq 1$, and $X \subseteq K^d$ is an arbitrary subset, then the fractional Helly number of the family 
	\[\Conv_{K^{d}} \cap X = \left\{ C \cap X : C \in \Conv_{K^d}\right\} \subseteq \mathcal{P}(X)\]
	 is at most $d+1$. Moreover, $\beta$ in Definition \ref{def: frac Helly numb} can be chosen depending only on $d$ and  $\alpha$ (and not on the field $K$ or set $X$).
	And if $K$ is infinite, then the fractional Helly number of the family $\Conv_{K^d}$  is exactly $d+1$.
\end{theorem}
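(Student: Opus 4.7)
The plan is to derive the upper bound directly from the dual VC-dimension bound established in Theorem \ref{thm: dual VC-dim}, combined with Matou\v{s}ek's theorem. First, by Theorem \ref{thm: dual VC-dim}, $\VC^{*}(\Conv_{K^{d}}) \leq d$, and by Remark \ref{rem: VC dim of subfamily} the same bound holds for the trace $\Conv_{K^{d}} \cap X$ on any $X \subseteq K^d$. Then Remark \ref{rem: frac helly dependence}, which records the uniform version of Fact \ref{fac: Matousek frac Helly}, immediately gives the fractional Helly number at most $d+1$ with $\beta$ depending only on $\alpha$ and $d$ (and, crucially, not on the specific family, hence not on $K$ or $X$).

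For the lower bound, assuming $K$ infinite, the plan is to exhibit a configuration showing that $k = d$ cannot serve as a fractional Helly number for $\Conv_{K^d}$. The idea is standard: take $n$ affine hyperplanes $H_{1},\ldots,H_{n}$ in $K^d$ in general position. Since $K$ is infinite, one can inductively choose normal vectors and constants so that any $d$ of the chosen hyperplanes meet in a unique point and no $d+1$ of them have a common intersection — the ``bad'' choices at each step are cut out by the non-vanishing of finitely many polynomial conditions in the coefficients, which one can avoid in an infinite field. Each $H_i$ is convex by Example \ref{ex: some conv sets}(3) (being an affine image of a coordinate hyperplane, or directly an affine subspace). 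Then all ${n \choose d}$ of the $d$-element subfamilies have non-empty intersection, so the hypothesis of Definition \ref{def: frac Helly numb} at level $k=d$ holds with $\alpha = 1$, but any subfamily with a common point has size at most $d$, so for any prescribed $\beta > 0$ one chooses $n > d/\beta$ and the conclusion fails. Hence the fractional Helly number of $\Conv_{K^d}$ is exactly $d+1$.

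The main obstacle is purely notational: the upper bound is a clean consequence of the already-established dual VC-dimension plus Matou\v{s}ek's theorem, and the uniformity statement requires only the explicit dependence of parameters in Remark \ref{rem: frac helly dependence}. The only slightly delicate point is arranging the general-position hyperplane configuration over an arbitrary infinite valued field, but since general position is a Zariski-open condition on the parameter space of $n$-tuples of affine hyperplanes, the infiniteness of $K$ suffices to realize it.
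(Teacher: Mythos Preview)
Your proposal is correct and follows essentially the same route as the paper: the upper bound is obtained exactly as you say (dual VC-dimension $\leq d$ via Theorem~\ref{thm: dual VC-dim} and Remark~\ref{rem: VC dim of subfamily}, then Remark~\ref{rem: frac helly dependence}), and the lower bound is via $n$ affine hyperplanes in $K^d$ in general position. The only difference is presentational: the paper constructs such hyperplanes explicitly by lifting to $K^{d+1}$ and producing an infinite set of vectors any $d+1$ of which are linearly independent (using that an infinite-dimensional vector space over an infinite field is not a finite union of proper subspaces), whereas you invoke the equivalent Zariski-openness argument.
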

\begin{proof}
By Fact \ref{fac: Matousek frac Helly} we have that the fractional
Helly number of a set system is at most the smallest integer larger
than its dual VC-density. Dual VC-density is, in turn, at most its
dual VC-dimension. Also for any set $X \subseteq K^d$ we have $ \VC^{*} \left( \Conv_{K^d} \cap X \right) \leq \VC^{*} \left( \Conv_{K^d} \right)$ by Remark \ref{rem: VC dim of subfamily}.
So $\Conv_{K^{d}} \cap X$ has dual VC-density at
most $d$ by Theorem \ref{thm: dual VC-dim}, hence its fractional Helly number is at most $d+1$ by Fact \ref{fac: Matousek frac Helly}. And an appropriate $\beta$ can be chosen depending only on $d$ and $\alpha$ by Remark \ref{rem: frac helly dependence}.

To show  that the fractional Helly number of $\Conv_{K^{d}}$ is at least $d+1$ when $K$ is infinite, we can use
 the standard example with affine hyperplanes in general position. We include the details for completeness.
First note that as the field $K$ is infinite, for any $K$-vector space $V$ of dimension $k$ and $v \in V\setminus\{0\}$ there exists an infinite set $S \subseteq V$ so that $v \in S$ and any $k$ vectors from $S$ are linearly independent. This is clear for $k=1$ by taking any infinite set of non-zero vectors, so assume that $k > 1$. By induction on $i \in \mathbb{N}_{\geq k}$ we can find sets $S_i$ such that $v \in S_i, |S_i| \geq i$ and every $k$ vectors from $S_i$ are linearly independent, for all $i$. Let $S_k$ be any basis of $V$ containing $v$. Assume $i>k$ and $S_i$ satisfies the assumption. Since $K$ is infinite, $V$ is not a union of finitely many proper subspaces, in particular there exists some 
$$w \in V \setminus \bigcup_{s \subseteq S_i, |s| = k-1} \langle s \rangle.$$
Let $S_{i+1} := S_i \cup \{w\}$. Since in particular any $s \subseteq S_i$ with $|s| = k-1$ is linearly independent by the inductive assumption, it follows that $s \cup \{w\}$ is also linearly independent, hence $S_{i+1}$ satisfies the assumption. Finally, $S := \bigcup_{i \in \mathbb{N}_{\geq k}} S_i$ is as wanted.

In particular, we can find an infinite set of vectors $S$ in $K^{d}\times K$  so that any $d+1$ of them are linearly independent and the standard basis vector $e_{d+1} \in S$. Then 
$$X := \left\{ \langle v, - \rangle : v \in S \right\} \subseteq \left( K^{d}\times K \right)^*$$ is an infinite set of dual vectors such that any $d+1$ of them are linearly independent, and it contains the projection map onto the last coordinate $\pi_{d+1} := \langle e_{d+1}, - \rangle : \left(x_{1}, \ldots, x_{d+1}\right)\mapsto x_{d+1}$.
Consider the
family 
$$\mathcal{H}:=\left\{ \ker\left(f\right)\mid f\in X\setminus\left\{ \pi_{d+1}\right\} \right\} \subseteq \mathcal{P}\left( K^d \times K \right)$$
of kernels of these dual vectors (excluding the projection map onto
the last coordinate), and let
$$\mathcal{H}':=\left\{ \left\{ v\in K^{d}\mid\left(v,1\right)\in H\right\} \mid H\in \mathcal{H} \right\} \subseteq \mathcal{P} \left( K^d \right).$$
Then $\mathcal{H}'$ is an infinite family of affine hyperplanes in $K^{d}$,
and we wish to show that any $d$ element of $\mathcal{H}'$ intersect
in a point, and any $d+1$ elements of $\mathcal{H}'$ have empty
intersection. 
For any pairwise distinct $f_{1}, \ldots,f_{d} \in X\setminus\left\{ \pi_{d+1}\right\} $,
by linear independence
$$\dim\left(\ker\left(f_{1}\right)\cap \ldots \cap\ker\left(f_{d}\right)\right)=d+1-\dim\left(\langle f_{1}, \ldots,f_{d} \rangle\right)=1.$$
And by their linear independence with $\pi_{d+1}$, 
$$\dim\left(\ker\left(f_{1}\right)\cap \ldots \cap\ker\left(f_{d}\right)\cap\ker\left(\pi_{d+1}\right)\right)=0.$$
That is, $\ker\left(f_{1}\right)\cap \ldots \cap\ker\left(f_{d}\right)$
is a line in $K^d \times K$ that intersects $\ker\left(\pi_{d+1}\right)=K^{d}\times\left\{ 0\right\} $
only at the origin, and thus must also intersect $K^{d}\times\left\{ 1\right\} $
in a single point; this shows that every $d$ elements of $\mathcal{H}'$
intersect in a point. 
And any pairwise distinct $f_{1}, \ldots ,f_{d+1}\in X\setminus\left\{ \pi_{d+1}\right\} $
span $\left(K^{d}\times K\right)^{*}$ by linear independence, so
$\ker\left(f_{1}\right)\cap \ldots \cap\ker\left(f_{d+1}\right)=\left\{ 0\right\} $,
and thus has empty intersection with $K^{d}\times\left\{ 1\right\} $.
This shows that every $d+1$ elements of $\mathcal{H}'$ have empty
intersection.

Using $\alpha=1$, for any $\beta>0$, take an arbitrary $n\geq\frac{d+1}{\beta}$.
Let $H_{1}, \ldots, H_{n}\in \mathcal{H}'$ be any distinct hyperplanes from this collection.
All $d$-subsets (so, $\alpha{n \choose d}$ of them) of $\left\{ H_{1}, \ldots, H_{n}\right\} $
have an intersection point, but there are no $\beta n\geq d+1$ of
them with a common intersection point. Therefore $\text{Conv}_{K^{d}}$
does not have fractional Helly number $d$.
\end{proof}

Note that Theorems \ref{thm: Helly number} and \ref{thm: fractional Helly number} replicate results for real convex sets,
while Theorems \ref{thm: breadth d}, \ref{thm: VC dim}, and \ref{thm: dual VC-dim} do not: as we have already remarked, $\Conv_{\mathbb{R}^{2}}$
has infinite breadth, VC-dimension, and dual VC-dimension. 
The
following result is somewhere in between. The classical Tverberg theorem says that for any $X\subseteq\mathbb{R}^d$ with $|X|\geq(d+1)(r-1)+1$, $X$ can be partitioned into $r$ disjoint subsets $X_1,\ldots,X_r$ whose convex hulls intersect; that is, $\conv(X_1)\cap\ldots\cap\conv(X_r)\neq\emptyset$. Over valued fields, we obtain a much stronger version (note that any element of the non-empty set $X_r$ in the statement of theorem \ref{thm: Tverberg} belongs to the convex hulls of each of the sets $X_i, i \in [r]$ --- which gives the usual conclusion of Tverberg's theorem over the reals):

\begin{theorem}\label{thm: Tverberg}
	Let $K$ be a valued field and $d,r \in \mathbb{N}_{\geq 1}$. Then any set $X \subseteq K^d$ with
	 $$|X| \geq\left(d+1\right)\left(r-1\right)+1$$
points in $K^{d}$ can be partitioned into subsets $X_{1}, \ldots, X_{r}$
such that $\left|X_{i}\right|=d+1$ for $i<r$, $\left|X_{r}\right|=\left|X\right|-\left(d+1\right)\left(r-1\right)$,
and $\conv\left(X_{i}\right)\supseteq\conv\left(X_{j}\right)$
for all $i\leq j \in [r]$.
\end{theorem}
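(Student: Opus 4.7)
The plan is to prove this by induction on $r$, using Corollary \ref{cor: Carath} as the essential ingredient. The underlying observation is that the strong form of Carath\'eodory's theorem over valued fields lets us peel off a $(d+1)$-element subset whose convex hull equals the convex hull of \emph{everything else combined}; iterating gives the nested chain of convex hulls.

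For the base case $r=1$, simply take $X_1 := X$. For the inductive step, assume the result for $r-1$ and let $X \subseteq K^d$ with $|X| \geq (d+1)(r-1)+1$. By Corollary \ref{cor: Carath} there is $Y \subseteq X$ with $|Y| \leq d+1$ and $\conv(Y) = \conv(X)$; since $|X| \geq d+2$ we may enlarge $Y$ to some $X_1 \subseteq X$ with $|X_1| = d+1$ by adjoining arbitrary additional points from $X \setminus Y$. As $Y \subseteq X_1 \subseteq X$, monotonicity of $\conv$ gives $\conv(X_1) = \conv(X)$. Set $X' := X \setminus X_1$, so $|X'| = |X| - (d+1) \geq (d+1)(r-2)+1$.

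Apply the induction hypothesis to $X'$ with parameter $r-1$, obtaining a partition $X' = X_2 \sqcup \ldots \sqcup X_r$ with $|X_i| = d+1$ for $2 \leq i < r$, $|X_r| = |X'| - (d+1)(r-2) = |X| - (d+1)(r-1)$, and $\conv(X_i) \supseteq \conv(X_j)$ for all $2 \leq i \leq j \leq r$. To finish, observe
\[
\conv(X_1) = \conv(X) \supseteq \conv(X') \supseteq \conv(X_j)
\]
for every $j \geq 2$, which together with the inductive chain provides $\conv(X_i) \supseteq \conv(X_j)$ for all $1 \leq i \leq j \leq r$, as required.

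There is essentially no obstacle: the heavy lifting (Radon and Carath\'eodory in their strong non-archimedean forms) has already been done in Proposition \ref{prop: Radon} and Corollary \ref{cor: Carath}. The only thing to watch is the arithmetic of the sizes, i.e.~that $|X| \geq (d+1)(r-1)+1$ ensures $|X| \geq d+1$ at the current step and leaves enough points to apply the hypothesis to the remainder.
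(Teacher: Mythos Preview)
Your proof is correct and follows exactly the same strategy as the paper's own proof: both arguments iterate Corollary~\ref{cor: Carath} to successively peel off a $(d+1)$-element subset whose convex hull equals the convex hull of the remaining points. The paper phrases this as an explicit iteration (``and so on''), while you wrap it in a formal induction on $r$, but the underlying argument is identical.
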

\begin{proof}
	Since any finitely generated convex set is the convex hull
of some $d+1$ points from it by Corollary \ref{cor: Carath}, we can find $X_{1} \subseteq X$ with $\left|X_{1}\right|=d+1$
and $\conv\left(X_{1}\right)=\conv\left(X\right)$, $X_{2} \subseteq X \setminus X_1$
with $\left|X_{2}\right|=d+1$ and $\conv\left(X_{2}\right)=\conv\left(X\setminus X_{1}\right)$,
and so on: once $X_{1}, \ldots, X_{i-1}$ have been chosen, pick $X_{i} \subseteq X \setminus \left( \bigcup_{j=1}^{i-1} X_j \right)$
such that $|X_i| = d+1$, $\conv\left(X_{i}\right)=\conv\left(X\setminus \bigcup_{j=1}^{i-1}X_j\right)$,
and then let $X_{r}$ consist of everything left over at the end.
\end{proof}

From this strong Tverberg theorem and the fractional Helly property,
we finally get an analog of the result due to Boros-F\"uredi \cite{boros1984number} and B\'ar\'any \cite{barany1982generalization} on the common points in the intersections of many ``simplices'' over valued fields (note that the conclusion is actually stronger than over the reals: the common point comes from the set $X$ itself). This answers a question asked by Kobi Peterzil and Itay Kaplan. Our argument is an adaptation of the second proof in \cite[Theorem 9.1.1]{matousek2013lectures}.

\begin{theorem}\label{thm: first selection lemma}
	For each $d \geq 1$ there is a constant $c = c(d) > 0$  such that: for any valued field $K$ and any finite $X\subseteq K^{d}$ (say $n:=\left|X\right|$), there
is some $a\in X$ contained in the convex hulls of at least $c{n \choose d+1}$
of the ${n \choose d+1}$ subsets of $X$ of size $d+1$.
\end{theorem}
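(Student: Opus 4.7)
The strategy is to adapt Matou\v{s}ek's second proof of the first selection lemma (\cite[Theorem 9.1.1]{matousek2013lectures}) to the valued field setting, combining the strong Tverberg theorem (Theorem \ref{thm: Tverberg}) with the fractional Helly property for $\Conv_{K^d} \cap X$ (Theorem \ref{thm: fractional Helly number}), and crucially relying on the fact that the fractional Helly constants can be chosen independently of $K$ and $X$ by Remark \ref{rem: frac helly dependence}. Fix $d$ and set $N := (d+1)^2 + 1$. The case $n < 2N$ will be handled trivially: any $a \in X$ belongs to $\binom{n-1}{d}$ of the simplices, which is at least $\frac{d+1}{2N-1}\binom{n}{d+1}$ (and the statement is vacuous when $n<d+1$), so this range contributes a constant depending only on $d$.

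For the main case $n \geq 2N$, first invoke Theorem \ref{thm: Tverberg} with $r = d+2$ for each $Y \in \binom{X}{N}$: this produces a partition $Y = Y_1 \sqcup \cdots \sqcup Y_{d+1} \sqcup \{y_Y\}$ into $d+1$ pairwise disjoint $(d+1)$-subsets together with a singleton $\{y_Y\} \subseteq X$, and the strong nesting $\conv(Y_{d+2}) \subseteq \conv(Y_i)$ for $i \leq d+2$ forces $y_Y \in \conv(Y_i)$ for every $i \in [d+1]$. Thus $T(Y) := \{Y_1, \ldots, Y_{d+1}\}$ is an unordered $(d+1)$-tuple of disjoint elements of $\binom{X}{d+1}$ whose simplices share the common point $y_Y \in X$.

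The main counting step is as follows: since $Y$ is recovered from $(T(Y), y_Y)$ as $\{y_Y\} \cup \bigcup T(Y)$, the assignment $Y \mapsto (T(Y), y_Y)$ is injective, so the number $|\mathcal{T}|$ of unordered $(d+1)$-subsets $T$ of $\binom{X}{d+1}$ consisting of pairwise disjoint sets with $\bigcap_{A \in T}\conv(A) \cap X \neq \emptyset$ satisfies $|\mathcal{T}| \cdot n \geq \binom{n}{N}$. Setting $M := \binom{n}{d+1}$ and comparing with the total number $\binom{M}{d+1}$ of $(d+1)$-subsets of the multiset $\mathcal{F} := \{\conv(A) \cap X : A \in \binom{X}{d+1}\}$, the identity $N-1 = (d+1)^2$ together with the standard estimates $\binom{n}{N}/n \geq (n/2)^{N-1}/N!$ and $\binom{M}{d+1} \leq n^{(d+1)^2}/((d+1)!)^{d+2}$ (valid for $n \geq 2N$) shows that $|\mathcal{T}|/\binom{M}{d+1} \geq \alpha_0$ for some explicit positive constant $\alpha_0 = \alpha_0(d)$, in particular independent of $n$, $K$, and $X$.

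Finally, apply Theorem \ref{thm: fractional Helly number} to the multiset $\mathcal{F}$ of $M$ elements of $\Conv_{K^d} \cap X$ with $\alpha := \alpha_0$: by Remark \ref{rem: frac helly dependence} the resulting $\beta > 0$ depends only on $d$, and yields a point $a \in X$ contained in at least $\beta M = \beta \binom{n}{d+1}$ of the simplices $\conv(A)$. Taking $c := c(d)$ to be the minimum of $\beta$ and the trivial constant handling $n < 2N$ finishes the proof. The main obstacle is the counting estimate showing that $|\mathcal{T}|/\binom{M}{d+1}$ is bounded below \emph{uniformly in $n, K, X$}; combined with the field/point-set-independent choice of $\beta$ in Remark \ref{rem: frac helly dependence}, this uniformity is precisely what allows the resulting $c$ to depend only on $d$.
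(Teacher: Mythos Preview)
Your proof is correct and follows essentially the same approach as the paper's: combine the strong Tverberg theorem with the uniform fractional Helly property for $\Conv_{K^d}\cap X$. The only difference is cosmetic---the paper applies Tverberg with $r=d+1$ to $(d+1)^2$-element subsets of $X$ (so the injection goes directly from $\binom{X}{(d+1)^2}$ to intersecting $(d+1)$-tuples of simplices, avoiding your division by $n$), whereas you take $r=d+2$ on $((d+1)^2+1)$-element subsets and track the leftover singleton; both routes yield the same $\Omega\bigl(n^{(d+1)^2}\bigr)$ count.
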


\begin{proof}

Let $X \subseteq K^d$ with $|X| = n$ be given, and let 
$$\mathcal{F} := \Conv_{K^d} \cap X = \left\{ C \cap X : C \in \Conv_{K^d}  \right\}$$
 be the family of all subsets of $X$ cut out by the convex subsets of $K^d$. 
Let $\left( S_i \right)_{i \in [N]}$ with $S_i \in \Conv_{K^d}$ be the sequence listing all convex hulls of subsets of $X$ of size $d+1$ in an arbitrary order (possibly with repetitions). Then $N = {n \choose d+1}$, and for a $(d+1)$-element subset $Y \subseteq X$ we let $g(Y) \in [N]$ be the index at which $\conv(Y)$ appears in this sequence. For each $i \in [N]$ let $S'_i := S_i \cap X \in \mathcal{F}$.
It is thus sufficient to show that there exists some $\alpha >0$, depending only
on $d$, such that at least $\alpha{N \choose d+1}$ of the $\left(d+1\right)$-element subsets $I \subseteq [N]$ satisfy $\bigcap_{i \in I}S'_i  \neq \emptyset$ --- as then Theorem \ref{thm: fractional Helly number} applied to $\mathcal{F} \subseteq \mathcal{P}(X)$ shows
the existence of $c >0$ depending only on $\alpha,d$, and hence only on $d$, so that for some $I \subseteq [N]$ with $|I| \geq c N = c {n \choose d+1}$ there exists some $a \in \bigcap_{i \in I} S'_i \subseteq \bigcap_{i \in I} S_i$ (in particular $a \in X$).

Now we find an appropriate $\alpha$. For any $\left(d+1\right)^{2}$-element subset $Y\subseteq X$, by
Theorem \ref{thm: Tverberg} (with $r:=d+1$), we can fix a partition of $Y$ into $d+1$ disjoint parts
$Y_{1}, \ldots,Y_{d+1}$, each of which has $d+1$ elements, and so that $\conv(Y_i) \supseteq \conv(Y_j)$ for all $i \leq j \in [d+1]$. In particular any element of the non-empty set $Y_{[d+1]} \subseteq X$ belongs to $\bigcap_{i \in [d+1]} \left(\conv(Y_i) \cap X \right) = \bigcap_{i \in [d+1]} \left(S'_{g(Y_i)} \right)$. As $g$ is a bijection, $Y \mapsto \left\{ g(Y_i) : i \in [d+1] \right\}$
 gives a function $f$ from $\left(d+1\right)^2$-element subsets
of $X$ to $\left(d+1\right)$-element subsets $I \subseteq [N]$ so that $\bigcap_{i \in I} S'_i \neq \emptyset$. Moreover, $f$ is an injection. Indeed, given a set $\{ j_i : i \in [d+1] \}$ in the image of $f$, as $g$ is a bijection, there is a unique set $\left\{Y_1, \ldots, Y_{d+1} \right\}$ with $Y_i \subseteq X$ disjoint of size $d+1$ so that $g(Y_i) = j_i$ for all $i \in [d+1]$, and there can be only one set $Y \subseteq X$ of size $(d+1)^2$ for which it is a partition. If follows that the number
of sets $I \subseteq [N]$ with $\bigcap_{i \in I}S'_i \neq \emptyset$ is at least 
$$\binom{n}{(d+1)^2} = \Omega \left( n^{(d+1)^2} \right) \geq \alpha \binom{N}{d+1} $$
for some sufficiently small $\alpha$ depending only on $d$.
\end{proof}

\section{Final remarks and questions}\label{sec: final remarks}
\subsection{Some further results and future directions}\label{sec: final quest and dirs}
The results of Section \ref{sec: combinatorics of conv sets} imply the following analog of the celebrated \emph{$(p,q)$-theorem} of Alon and Kleitman \cite{alon1992piercing} for convex sets over valued fields.
\begin{cor}\label{cor: pq theorem}
	For any $d,p,q \in \mathbb{N}_{\geq 1}$ with $p \geq q \geq d+1$ there exists $T = T(p,q,d) \in \mathbb{N}$ such that: if $K$ is a valued field and $\mathcal{F}$ is a family of convex subsets of $K^d$ such that among every $p$ sets of $\mathcal{F}$, some $q$ have a non-empty intersection, then there exists a $T$-element set $Y \subseteq K^d$ intersecting all sets of $\mathcal{F}$. 
	\end{cor}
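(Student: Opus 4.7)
The plan is to derive Corollary \ref{cor: pq theorem} from a general abstract form of the Alon--Kleitman $(p,q)$-theorem applied to the family $\Conv_{K^d}$, using the combinatorial properties already established in Section \ref{sec: combinatorics of conv sets}. The abstract $(p,q)$-theorem (in the form proved by Alon--Kalai--Matou\v{s}ek--Meshulam) states that any set system $\mathcal{F}$ satisfying a uniform fractional Helly property together with a uniform bound on the dual shatter function (equivalently, finite dual VC-dimension) admits a bounded piercing number depending only on $p$, $q$, and the relevant dimensional parameters. Our set system satisfies both inputs: by Theorem \ref{thm: fractional Helly number}, $\Conv_{K^d}$ has fractional Helly number at most $d+1$ with the associated $\beta$ depending only on $d$ and $\alpha$ (and \emph{not} on $K$), and by Theorem \ref{thm: dual VC-dim}, $\Conv_{K^d}$ has dual VC-dimension exactly $d$, so by Sauer--Shelah its dual shatter function is bounded by a polynomial of degree $d$ uniformly in $K$.

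First I would recall the abstract Alon--Kleitman argument: given the $(p,q)$-property on a family $\mathcal{F}$, one uses fractional Helly to deduce a lower bound on the fractional piercing number (i.e.~there is a probability measure $\mu$ on $X$ and $\varepsilon > 0$ with $\mu(F) \geq \varepsilon$ for all $F \in \mathcal{F}$), by an LP-duality argument that combines the $(p,q)$-condition with the fractional Helly bound to produce a fractional transversal. Then the weak $\varepsilon$-net theorem, applied to $(X,\mu)$ using the bound on the dual VC-dimension, converts such a fractional transversal into an honest transversal of size bounded by a function of $\varepsilon$ and the dual VC-dimension alone. Composing the two steps yields a bound $T(p,q,d)$ on the piercing number of $\mathcal{F}$.

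Second, I would verify that all the relevant bounds are uniform in $K$. This is the crux of why we get $T = T(p,q,d)$ independent of the valued field $K$. The fractional Helly parameters are uniform by Remark \ref{rem: frac helly dependence} and Theorem \ref{thm: fractional Helly number}. The $\varepsilon$-net size for a set system of dual VC-dimension $\leq d$ depends only on $d$ and $\varepsilon$ by the standard $\varepsilon$-net theorem, hence is also uniform in $K$. The LP-duality step is purely combinatorial and depends only on $p$, $q$, and the fractional Helly parameters.

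The main obstacle, as usual in applications of the abstract $(p,q)$-theorem, is checking that the quantitative hypotheses of the abstract machinery line up with what our results supply; once one has the uniform fractional Helly property together with uniformly bounded dual VC-dimension, the derivation is entirely formal. In our case this check is immediate because Theorems \ref{thm: fractional Helly number} and \ref{thm: dual VC-dim} are formulated precisely in the form the abstract machinery requires, so the corollary follows.
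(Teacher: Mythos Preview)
Your approach is essentially the one the paper takes, but there is a small mix-up in the hypotheses of the black boxes you invoke. The paper records two routes: (i) apply \cite[Theorem~8]{alon2002transversal} directly, using that $\Conv_{K^d}$ has the fractional Helly property (Theorem~\ref{thm: fractional Helly number}) and is \emph{closed under intersections}; or (ii) combine the fractional Helly property with the existence of $\varepsilon$-nets, which follows from the \emph{primal} VC-dimension bound $\VC(\Conv_{K^d})=d+1$ of Theorem~\ref{thm: VC dim} (not the dual VC-dimension of Theorem~\ref{thm: dual VC-dim}). In your sketch you attribute to Alon--Kalai--Matou\v{s}ek--Meshulam a hypothesis of bounded dual shatter function, whereas their Theorem~8 instead needs closure under intersection; and in your LP-duality/$\varepsilon$-net outline you appeal to the dual VC-dimension for the $\varepsilon$-net step, when the Haussler--Welzl $\varepsilon$-net theorem uses the primal one. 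Neither slip is fatal here --- the family \emph{is} intersection-closed, and the primal VC-dimension \emph{is} bounded by Theorem~\ref{thm: VC dim} (and in any case primal and dual VC-dimension bound one another) --- so once the correct hypotheses are plugged in, the argument goes through exactly as you outline. Your remark about uniformity in $K$ is correct and is precisely why $T$ depends only on $p,q,d$.
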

\noindent Corollary \ref{cor: pq theorem} follows formally by applying \cite[Theorem 8]{alon2002transversal} since the family $\Conv_{K^d}$ has fractional Helly property (Theorem \ref{thm: fractional Helly number})  and is closed under intersections. Alternatively, it follows with a slightly better bound on $T$ by combining the fractional Helly property with the existence of $\varepsilon$-nets for families of bounded VC-dimension (Theorem \ref{thm: VC dim}), as outlined at the end of \cite[Section 1]{matousek2004bounded}. The problem of determining the optimal bound on $T(p,q,d)$ is widely open over the reals (see \cite[Section 2.6]{barany2021helly}), and we expect that it might be easier  in the valued fields setting.

Kalai \cite{kalai1984intersection} and Eckhoff \cite{eckhoff1985upper} proved that in the fractional Helly property for convex sets over the reals, one can take $\beta(d,\alpha) = 1 - (1-\alpha)^{\frac{1}{d+1}}$ (and this bound is sharp). 

\begin{problem}
	What is the optimal dependence of $\beta$ on $d,\alpha$ in Theorem \ref{thm: fractional Helly number}? 
\end{problem}
Over $\mathbb{R}$, Sierksma's Dutch cheese conjecture predicts a lower bound for the number of Tverberg partitions (see e.g.~\cite[Conjecture 3.12]{de2019discrete} and the references there). We expect the same bound to holds over valued fields:

\begin{conj}\label{conj: Dutch cheese}
For any valued field $K$ and $X\subset K^d$ with $|X|=(r-1)(d+1)+1$, there are at least $((r-1)!)^d$ partitions of $X$ into parts whose convex hulls intersect.
\end{conj}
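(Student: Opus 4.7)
The plan is induction on $r$, leveraging the greedy construction behind the strong Tverberg theorem (Theorem \ref{thm: Tverberg}). For $r=1$ the statement is trivial. For $r \geq 2$ and $X \subseteq K^d$ with $|X|=(r-1)(d+1)+1$, every nested Tverberg partition produced by Theorem \ref{thm: Tverberg} arises by iteratively picking $X_i \subseteq X \setminus \bigcup_{j<i}X_j$ of size $d+1$ with $\conv(X_i) = \conv(X \setminus \bigcup_{j<i}X_j)$, and the forced nesting $\conv(X_1) \supseteq \cdots \supseteq \conv(X_r)$ essentially recovers the unordered partition from its greedy ordering. Hence the conjecture reduces to the counting lemma: for every finite $X' \subseteq K^d$ with $|X'| = k(d+1)+1$ and $k \geq 1$, the number of $(d+1)$-subsets $Y \subseteq X'$ with $\conv(Y) = \conv(X')$ is at least $k^d$. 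Granting this, the inductive product yields at least $\prod_{k=1}^{r-1} k^d = ((r-1)!)^d$ partitions, as desired.

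To attack the counting lemma, the plan is to exploit the product-of-flags description of $\conv(X')$ provided by Corollary \ref{cor: class of fg modules}. After translating so that $0 \in X'$, write $\conv(X') = \{w_1 + \cdots + w_d : w_i \in F_i,\ \nu(w_i) \geq \gamma_i\}$ for a complete flag $\{0\} \subsetneq F_1 \subsetneq \cdots \subsetneq F_d = K^d$ and an increasing sequence $\gamma_1 \leq \cdots \leq \gamma_d$. A subset $Y = \{y_0, y_1, \ldots, y_d\} \subseteq X'$ of size $d+1$ satisfies $\conv(Y) = \conv(X')$ precisely when, after translating by $-y_0$, the remaining vectors $y_i - y_0$ can be reordered into an \emph{adapted basis}, meaning that the valuation-preserving projection $\pi'_{i-1}\colon K^d \to K^d$ modulo $F_{i-1}$ (provided by Lemma \ref{lem: val pres emb quotient}) sends $y_i - y_0$ to a vector of valuation exactly $\gamma_i$. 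The approach is to classify each $x \in X'$ by its valuation profile $\big(\nu(\pi'_0(x)), \nu(\pi'_1(x)), \ldots, \nu(\pi'_{d-1}(x))\big)$ and run a pigeonhole/product argument on the fibers of this profile to extract at least $k^d$ valid unordered $(d+1)$-subsets.

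The main obstacle will be making the pigeonhole estimate precise. The valuation profile depends on the choice of origin within $X'$ and on the choice of flag, and both ties among the $\gamma_i$ and coincidences among projected valuations can simultaneously inflate some counts and collapse others; one strategy is to reduce to a generic configuration via a valuation-preserving perturbation and then argue by semi-continuity, but the non-archimedean setting makes the relevant notion of ``genericity'' more delicate than over $\mathbb{R}$. A secondary issue is that two distinct greedy sequences can produce the same unordered partition exactly when consecutive convex hulls $\conv(X_i) = \conv(X_{i+1})$ coincide, giving a reordering ambiguity that must be controlled. Since the parallel Sierksma conjecture over the reals is still open in general, a complete proof of Conjecture \ref{conj: Dutch cheese} likely demands genuinely new combinatorial input beyond Corollary \ref{cor: class of fg modules}, though the rigid and explicitly algorithmic structure of finitely generated convex sets in the valued-field setting seems substantially more tractable than in the archimedean case.
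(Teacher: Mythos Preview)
The statement is a \emph{conjecture}; the paper offers no proof. More to the point, the Remark immediately following Conjecture~\ref{conj: Dutch cheese} explicitly anticipates and refutes your approach: it states that counting only the \emph{nested} Tverberg partitions produced by Theorem~\ref{thm: Tverberg} cannot yield the bound $((r-1)!)^d$, and supplies a concrete family witnessing this.

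Your scheme hinges on the ``counting lemma'': for $|X'| = k(d+1)+1$, at least $k^d$ of the $(d+1)$-subsets $Y \subseteq X'$ satisfy $\conv(Y) = \conv(X')$. This is false. Take $d=2$, fix $x \in K$ with $\nu(x) \neq 0$, and let $X' := \{(x^n, x^{-n}) : n \in [3k+1]\}$. As the paper's Remark explains, a $3$-subset $Y \subseteq X'$ has $\conv(Y) = \conv(X')$ only if $Y$ contains both extreme points $(x, x^{-1})$ and $(x^{3k+1}, x^{-(3k+1)})$, together with one of the remaining $3k-1$ points. Thus there are exactly $3k-1$ such subsets, and $3k-1 < k^2$ once $k \geq 3$. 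Feeding this into your inductive product gives only $\prod_{i=1}^{r-1}\bigl(3(r-i)-1\bigr) < ((r-1)!)^2$ nested partitions for large $r$, which is the paper's computation. So the reduction to the counting lemma is not a reduction to a true statement; any proof of the conjecture must count non-nested Tverberg partitions as well, and the flag description from Corollary~\ref{cor: class of fg modules} together with the greedy procedure is, by itself, not enough.
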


\begin{remark}
In Theorem \ref{thm: Tverberg}, we showed the existence of Tverberg partitions satisfying the stronger property that the convex hulls of the parts are linearly ordered by inclusion. It is not true that for $X\subseteq K^d$ with $|X|=(d+1)(r-1)+1$, there are at least $((r-1)!)^d$ different ways of partitioning $X$ into $X_1,\ldots,X_r$ such that $\conv(X_1)\supseteq\ldots\supseteq\conv(X_r)$. Thus any attempt to prove Conjecture \ref{conj: Dutch cheese} would have to involve other Tverberg partitions that do not have this property.
For an example in $K^2$ where this bound fails, let $x\in K$ with $\nu(X)\neq0$, and let $X:=\{(x^n, x^{-n}) | n \in [3(r-1)+1]\}$. For any partition of $X$ into $X_1,\ldots,X_r$ such that $\conv(X_1)\supseteq\ldots\supseteq\conv(X_r)$, for each $i<r$, $X_i$ must consist of the points corresponding to the lowest and highest values of $n$ among all points not already in $X_1\cup\ldots\cup X_{i-1}$, together with one of the other $3(r-i)-1$ remaining points, and $X_r$ must consist of whatever point is left over. So the number of partitions of $X$ of this form is $\prod_{i=1}^{r-1}(3(r-i)-1)<\prod_{i=1}^{r-1}3(r-i)=3^{r-1}(r-1)!<((r-1)!)^2$ for large enough $r$.
\end{remark}

We expect that the \emph{colorful} Tverberg theorem also holds
over valued fields, however the proofs for convex sets over $\mathbb{R}$ rely on topological arguments not readily available in the valued field context:

\begin{conj}
	For any integers $r,d \geq 2$ there exists 
$t\geq r$ such that: for any valued field $K$ and $X\subseteq K^{d}$ with $\left|X\right|=t\left(d+1\right)$,
partitioned into $d+1$ color classes $C_{1}, \ldots, C_{d+1}$ each of size
$t$, there exist pairwise disjoint $X_{1}, \ldots, X_{r}\subseteq X$ with $\left|X_{i}\cap C_{j}\right|=1$
for $i\in\left[r\right]$ and $j\in\left[d+1\right]$, and $\bigcap_{i\in\left[r\right]}\conv\left(X_{i}\right)\neq\emptyset$.
\end{conj}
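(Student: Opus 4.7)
The strategy is to reduce the conjecture to a valued-field analog of B\'ar\'any's colorful Carath\'eodory theorem, and then run an iterative extraction scheme reminiscent of B\'ar\'any's proof of his first selection lemma over $\mathbb{R}$. Over $\mathbb{R}$, known proofs of colorful Tverberg beyond $r = 2$ use equivariant cohomology, which is unavailable in the valued-field setting; the hope is that the strong structural results of Section \ref{sec: O-modules classif}, together with the bounded-breadth/VC-dimension phenomena from Section \ref{sec: combinatorics of conv sets}, can furnish replacement input.

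\textbf{Step 1 (Colorful Carath\'eodory).} First I would establish the following: if $C_1, \ldots, C_{d+1} \subseteq K^d$ are finite with $p \in \bigcap_{j \in [d+1]} \conv(C_j)$, then there is a rainbow set $Y = \{y_1, \ldots, y_{d+1}\}$, with $y_j \in C_j$, such that $p \in \conv(Y)$. Adapting B\'ar\'any's minimization argument, after translating $p$ to $0$ I would choose among the finitely many rainbow sets one maximizing
$$\gamma(Y) := \max \bigl\{ \nu_{K^d}(q) : q \in \conv(Y) \bigr\}.$$
The maximum is attained since $\conv(Y)$ is finitely generated (Corollary \ref{cor: class of fg modules}) and equals $\infty$ exactly when $0 \in \conv(Y)$. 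If $\gamma(Y) < \infty$, I would use the flag decomposition of $\conv(Y)$ from Corollary \ref{cor: class of fg modules} to identify a coordinate in which $0$ fails to lie in $\conv(Y)$; then $0 \in \conv(C_j)$ for the appropriate color $j$ supplies an element whose insertion in place of $y_j$ strictly increases $\gamma$, contradicting maximality.

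\textbf{Step 2 (Rainbow extraction).} With colorful Carath\'eodory in hand, I would apply the strong Tverberg theorem (Theorem \ref{thm: Tverberg}) within each color class $C_j$ separately to obtain, for $t$ sufficiently large, a decomposition $C_j = C_{j,1} \sqcup \ldots \sqcup C_{j,r}$ into disjoint $(d+1)$-subsets with $\conv(C_{j,1}) \supseteq \ldots \supseteq \conv(C_{j,r})$. A point $p$ in the common intersection $\bigcap_{j \in [d+1]} \conv(C_{j,r})$ (whenever it exists) automatically lies in every $\conv(C_{j,i})$. I would then extract rainbow sets $X_1, \ldots, X_r$ iteratively: at step $i$, apply colorful Carath\'eodory to the current surviving color classes to produce a rainbow $X_i$ with $p \in \conv(X_i)$, and then verify using the nested structure of the $C_{j,k}$ that the removal of $X_i$ still leaves $p$ in the convex hull of each remaining color class.

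\textbf{Main obstacle.} The hardest point is the very existence of a common deep point $p \in \bigcap_{j} \conv(C_{j,r})$: an arbitrary partition of $X$ into color classes need not have overlapping convex hulls at all, so $p$ cannot simply be found by colorful Helly. I would therefore expect the proof to proceed by choosing $t$ large enough that \emph{many} choices of nested Tverberg refinements per color are available, and then using a fractional Helly / pigeonhole argument (via Theorem \ref{thm: fractional Helly number}) to find compatible refinements across colors whose innermost pieces do share a common point. The finite breadth of the family of convex subsets of $K^d$ (Theorem \ref{thm: breadth d}) should play a crucial role by rigidly constraining how such intersections can behave, in stark contrast with the real case. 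Obtaining any bound $t = t(d,r)$ this way seems plausible; achieving the conjectural optimal $t = r$ (which is open even over $\mathbb{R}$ in full generality) will very likely require genuinely new structural input beyond what is developed here.
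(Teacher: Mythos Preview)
The statement you are addressing is a \emph{conjecture} in the paper, not a theorem: the paper offers no proof, only the remark preceding it that ``the proofs for convex sets over $\mathbb{R}$ rely on topological arguments not readily available in the valued field context.'' There is therefore no paper's proof to compare your proposal against.

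As a strategy for the open problem, your plan has a genuine gap, and to your credit you name it yourself under ``Main obstacle.'' The color partition $C_1,\ldots,C_{d+1}$ is \emph{given}, not chosen, and for an arbitrary such partition the sets $\conv(C_j)$ need not intersect at all (e.g.\ place each color class inside its own small ball, the balls pairwise disjoint). Hence the point $p \in \bigcap_j \conv(C_{j,r})$ required for your iterative colorful-Carath\'eodory extraction simply need not exist, and Step~2 as written cannot start. Your proposed workaround---choose $t$ large and apply a fractional-Helly/pigeonhole argument over the many Tverberg refinements within each color---is not yet an argument: fractional Helly (Theorem~\ref{thm: fractional Helly number}) needs a positive fraction of $(d{+}1)$-tuples with nonempty intersection as \emph{input}, and nothing in your outline produces that hypothesis from the bare coloring. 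Finite breadth (Theorem~\ref{thm: breadth d}) constrains intersection patterns once intersections exist, but does not manufacture them.

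Step~1 (a valued-field colorful Carath\'eodory) is a reasonable subgoal and the maximization idea you sketch is in the right spirit; but even if it goes through it is conditional on already having a point in $\bigcap_j \conv(C_j)$, which, as just noted, is exactly what is missing. In summary, your proposal is an outline with the central difficulty correctly identified and left unresolved; the paper itself leaves the statement open for the same reason.
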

\noindent It would formally imply (see e.g.~\cite[Section 9.2]{matousek2013lectures}) the ``second selection lemma'' over valued fields generalizing Theorem \ref{thm: first selection lemma}:
\begin{conj}
	For each $d \in \mathbb{N}_{\geq 1}$ there exist $c,s>0$ such that: for any valued field $K$, 
$\alpha\in\left(0,1\right]$ and $n \in \mathbb{N}$, for every $X\subseteq K^{d}$
with $\left|X\right|=n$, and every family $\mathcal{F}$ of $(d+1)$-element subsets
of $X$ with $\left| \mathcal{F} \right|\geq\alpha{n \choose d+1}$, there
is a point contained in the convex hulls of at least $c\alpha^{s}{n \choose d+1}$
of the elements of $\mathcal{F}$.
\end{conj}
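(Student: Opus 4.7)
The plan is to deduce this conjecture from the colorful Tverberg conjecture stated just above, by adapting to valued fields the standard derivation of the real second selection lemma from the real colorful Tverberg theorem; see \cite[Section 9.2]{matousek2013lectures}. All combinatorial steps of that derivation are field-agnostic, so the whole argument transfers verbatim once colorful Tverberg is available over $K$.

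To set up the reduction, fix $d$ and a sufficiently large integer $r = r(d)$, and let $t = t(r,d) \geq r$ be as given by the colorful Tverberg conjecture. Given $X \subseteq K^d$ with $|X| = n$ and $\mathcal{F} \subseteq \binom{X}{d+1}$ with $|\mathcal{F}| \geq \alpha \binom{n}{d+1}$, first I would randomly partition $X$ into $d+1$ color classes $C_1, \ldots, C_{d+1}$ of nearly equal size; by linearity of expectation, an $\alpha$-fraction of the $(d+1)$-element transversals of the partition belong to $\mathcal{F}$. Then I would randomly sub-sample $t$-element subsets $C_i' \subseteq C_i$; a positive fraction of samplings (depending only on $\alpha$ and $d$) yield at least $\tfrac{\alpha}{2} t^{d+1}$ rainbow transversals of $(C_1',\ldots,C_{d+1}')$ lying in $\mathcal{F}$. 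For each such $\mathcal{F}$-rich sample, colorful Tverberg produces a common point $q$ in the convex hulls of some $r$ pairwise disjoint rainbow transversals $A_1,\ldots,A_r$. A supersaturation count in the complete $(d+1)$-partite $(d+1)$-uniform hypergraph ensures that, after a further averaging, a positive fraction of the Tverberg $r$-tuples have every $A_j \in \mathcal{F}$, producing many points $q$ each lying in $r$ simplices of $\mathcal{F}$. Aggregating incidences $(q,A)$ with $q \in \conv(A)$ and $A \in \mathcal{F}$ over all samplings and invoking pigeonhole yields some fixed $q^{*} \in K^d$ contained in $\conv(A)$ for at least $c\alpha^{s} \binom{n}{d+1}$ sets $A \in \mathcal{F}$, where $c,s$ depend only on $d$; the exponent $s$ traces back to the $\alpha^{r}$ loss in the supersaturation step.

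The main obstacle is that colorful Tverberg over valued fields is itself open: the known proofs for $\mathbb{R}^{d}$ rely on topological methods (equivariant Borsuk--Ulam, obstruction theory) with no direct analogue in the valued-field setting. A combinatorial or algebraic approach would suffice here; one plausible route is to exploit the flag-and-quasi-radius description of $\mathcal{O}$-submodules from Theorem \ref{thm: main Omod pres}, running an inductive Radon-style argument on successive quotients $K^{d}/F_{i}$ in the spirit of Proposition \ref{prop: Radon}. The reduction above uses no special property of $\mathbb{R}$ beyond the availability of the colored input, so any such progress on colorful Tverberg would immediately yield the present conjecture.
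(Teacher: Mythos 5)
The statement you are proving is presented in the paper as a conjecture, not a theorem; the paper does not prove it, and its only comment is exactly what you propose: it would follow from the colorful Tverberg conjecture over valued fields by adapting the standard reduction in Matou\v{s}ek's book, Section 9.2. So your proposal takes the same route as the paper's remark, but neither constitutes an unconditional argument, because colorful Tverberg over valued fields is itself a conjecture in the paper, and you correctly flag it as the missing ingredient (the known proofs over $\mathbb{R}^d$ are topological and do not transfer to $K^d$).

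One detail in your sketch is also off, independently of the colorful Tverberg issue. After subsampling $t$-element sets $C_i' \subseteq C_i$, you aim for roughly $\frac{\alpha}{2} t^{d+1}$ rainbow transversals lying in $\mathcal{F}$, and then hope that ``a further averaging'' forces the $r$ transversals produced by colorful Tverberg to lie in $\mathcal{F}$. That cannot work as stated: colorful Tverberg gives you no control over which $r$ disjoint rainbow transversals it returns --- the selection is determined by the geometry of the points, not by the combinatorics of $\mathcal{F}$ --- so a positive density of $\mathcal{F}$-transversals in the sub-grid does not make the Tverberg output land inside $\mathcal{F}$. The standard argument instead uses hypergraph supersaturation (Erd\H{o}s--Simonovits type) to find many sub-grids $(Y_1, \ldots, Y_{d+1})$ with $|Y_i| = t$ in which \emph{every} rainbow transversal already lies in $\mathcal{F}$; only then does the colorful Tverberg output automatically consist of $\mathcal{F}$-simplices, and a double count of point--simplex incidences yields the $c\alpha^s\binom{n}{d+1}$ bound. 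This is a repairable imprecision, but the subsampling step as written would fail even over $\mathbb{R}$.
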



	Corollary \ref{cor: descr of conv sets in sph compl} has the following immediate model-theoretic application.
	\begin{remark}\label{rem: conv ext def}
		If $K$ is a spherically complete valued field, then every convex
subset of $K^{d}$ is definable in the expansion of the field $K$ by a predicate
for each Dedekind cut of the value group (so in particular definable in \emph{Shelah expansion of $K$} by all externally definable sets \cite{shelah2009dependent, chernikov2013externally}). And conversely, every Dedekind
cut of the value group is definable in the expansion of $K$ by
a predicate for each $\mathcal{O}$-submodule of $K$. In particular,
if $K$ has value group $\mathbb{Z}$, then all convex subsets of
$K^{d}$ form a definable family.
	\end{remark}
	
\begin{expl}\label{ex: naming convex over reals}
In contrast, naming a single (bounded) convex subset of $\mathbb{R}^2$ in the field of reals allows to define the set of integers.
 Indeed, we can define a continuous and piecewise linear function $f:[0,1] \to [0,1]$  such that 
$$C := \left\{(x,y): x\in [0,1], 0\leq y\leq f(x) \right\}$$
 is convex but the set of 
points where $f$ is not differentiable is exactly $\left\{\frac{1}{n} : n \in \mathbb{N}_{\geq 2} \right\}$. 
Now in the field of reals with a predicate for $C$ we can define $f$ and the set of points where it is not differentiable, hence $\mathbb{N}$ is also definable.	
\end{expl}

\subsection{Other notions of convexity over non-archimedean fields} \label{sec: other convexities} We briefly overview several  other kinds of convexities over non-archimedean fields considered in the literature. The extension of
Hilbert (projective) geometry to convex sets in a generalized sense is a topic of high current interest,
see e.g.~\cite{guilloux2016yet}. In a different spirit, in tropical geometry, convex sets
over real closed non-archimedean fields have been considered (unlike what is done here, this leads
to a combinatorial convexity similar to the classical one, since by Tarski's completeness theorem,
polyhedral properties of a combinatorial nature are the same over all real closed fields). Moreover,
tropical polyhedra are obtained as images of such polyhedra by the nonarchimedean valuation, see e.g.~\cite{develin2007tropical}.
Polytopes and simplexes in p-adic fields are introduced in \cite{darniere2017polytopes, darniere2019semi}, and demonstrated to play in $p$-adically closed fields the role played by real simplexes in the classical results of triangulation of semi-algebraic sets over real closed fields.
Although we are not aware of any direct link of these results with the present
work, we hope for some connections to be found in the future.

\subsection{Abstract convexity spaces}\label{sec: convexity spaces}
Our results here can be naturally placed in the context of abstract convexity spaces, we refer to e.g.~\cite{van1993theory} for an introduction to the subject. A \emph{convexity space} is a pair $(X, \mathcal{C})$, where $X$ is a set and $\mathcal{C} \subseteq 2^{X}$ is a family of subsets of $X$ closed under intersection with $\emptyset, X \in \mathcal{C}$. The sets in $\mathcal{C}$ are called \emph{convex}. Given a subset $Y \subseteq X$, the \emph{convex hull} of $Y$, denoted $\conv(Y)$, is the smallest set in $\mathcal{C}$ containing $Y$ (equivalently, the intersection of all sets in $\mathcal{C}$ containing $Y$). A convex set $C \in \mathcal{C}$ is called a \emph{half-space} if its complement is also convex. The convexity space $(X, \mathcal{C})$ is \emph{separable} if for every $C \in \mathcal{C}$ and $x \in X \setminus C$, there exists a half-space $H \in \mathcal{C}$ so that $C \subseteq H$ and $x \notin H$ (equivalently, if  every convex set is the intersection of all half-spaces containing it). Separability is an abstraction of the hyperplane separation (and more generally Hahn-Banach) theorem. In particular, $\left(\mathbb{R}^d, \Conv_{\mathbb{R}^d} \right)$ is a separable convexity space (see e.g.~\cite[Section 1.1]{moran2019weak} or \cite{van1993theory} for many other examples). The \emph{Radon number}\footnote{Sometimes in the literature it is defined with ``$\geq$'' instead of ``$>$'' leading to the value off by $1$, we are following the notation from \cite[Chapter II]{van1993theory} here.}  of a convexity space $(X, \mathcal{C})$  is the smallest $k \in \mathbb{N}_{\geq 1}$ (if it exists) such that every $Y \subseteq X$ with $|Y| > k$ can be partitioned into two parts $Y_1, Y_2$ such that $\conv(Y_1) \cap \conv(Y_2) \neq \emptyset$ (the classical Radon's theorem states that the Radon number of $\left(\mathbb{R}^d, \Conv_{\mathbb{R}^d} \right)$ equals $d+1$). Given $\emptyset \neq Y \subseteq X$, a partition $Y_1, \ldots, Y_r$ of $Y$ is \emph{Tverberg} if $\bigcap_{i=1}^{r} \conv(Y_i) \neq \emptyset$. The \emph{$r$th Tverberg number} of $(X, \mathcal{C})$ is the smallest $k$ so that every $Y \subseteq X$ with $|Y| > k$ has a Tverberg partition in $r+1$ parts. Note that the first Tverberg number is the Radon number, and the classical theorem of Tverberg says that the $r$th Tverberg number of $\left( \mathbb{R}^d, \Conv_{\mathbb{R}^d} \right) $ is $r(d+1)$.

Now let $K$ be a valued field and $d \in \mathbb{N}_{\geq 1}$. Then $\left(K^d, \Conv_{K^d} \right)$ is a convexity space, but we stress that it is \emph{not separable}; in fact, $\emptyset$ and $K^d$ are the only half-spaces. This is because for any non-empty proper convex set $C$, let $x\in C$, $y\in K^d\setminus C$, and $\alpha\in K\setminus\mathcal{O}$. Then $z:=x+\alpha(y-x)\notin C$, since $y=\alpha^{-1}z+(1-\alpha^{-1})x$ is a convex combination. But then $x=(1-\alpha)^{-1}(z-\alpha y)$ is a convex combination of elements of $K^d\setminus C$, so $K^d\setminus C$ is not convex.

Proposition \ref{prop: Radon} implies that the Radon number of $\left(K^d, \Conv_{K^d} \right)$ is $d+1$. By the Levi inequality in an arbitrary convexity space (\cite[Chapter II(1.9)]{van1993theory}), it follows that the Helly number of $\Conv_{K^d}$ (Definition \ref{def: Helly num}) is $\leq d+1$ (we included a proof in Theorem \ref{thm: Helly number} for completeness).
It was also recently shown in \cite{holmsen2021radon} that in any convexity space $(X, \mathcal{C})$ with Radon number $k$, $\mathcal{C}$ has a fractional Helly number (Definition \ref{def: frac Helly numb}) bounded by some function of $k$. In the case of $\left(K^d, \Conv_{K^d} \right)$ this general bound is much weaker than the optimal bound $d+1$ given in Theorem \ref{thm: fractional Helly number}. Corollary \ref{cor: Carath} implies that the Carath\'eodory number of $\left(K^d, \Conv_{K^d} \right)$ is $d+1$ (see \cite[Chapter II(1.5)]{van1993theory} for the definition). Finally, Theorem \ref{thm: Tverberg} implies that the $r$th Tverberg number of $\left(K^d, \Conv_{K^d} \right)$  is $r(d+1)$ (finiteness of the $r$th Tverberg numbers for all $r$ follows from the finiteness of the Radon number in an arbitrary convexity space, with a much weaker bound \cite[Chapter II(5.2)]{van1993theory}).

\bibliographystyle{alpha}
\bibliography{ref}

\newcommand{\etalchar}[1]{$^{#1}$}
\begin{thebibliography}{AvdDvdH17}

\bibitem[ADH{\etalchar{+}}16]{aschenbrenner2016vapnik}
Matthias Aschenbrenner, Alf Dolich, Deirdre Haskell, Dugald Macpherson, and
  Sergei Starchenko.
\newblock Vapnik-{C}hervonenkis density in some theories without the
  independence property, {I}.
\newblock {\em Transactions of the American Mathematical Society},
  368(8):5889--5949, 2016.

\bibitem[AK92]{alon1992piercing}
Noga Alon and Daniel~J Kleitman.
\newblock Piercing convex sets and the {H}adwiger-{D}ebrunner $(p, q)$-problem.
\newblock {\em Advances in Mathematics}, 96(1):103--112, 1992.

\bibitem[AKMM02]{alon2002transversal}
Noga Alon, Gil Kalai, Ji\v{r}\'i Matou\v{s}ek, and Roy Meshulam.
\newblock Transversal numbers for hypergraphs arising in geometry.
\newblock {\em Advances in Applied Mathematics}, 29(1):79--101, 2002.

\bibitem[AvdDvdH17]{aschenbrenner2017asymptotic}
Matthias Aschenbrenner, Lou van~den Dries, and Joris van~der Hoeven.
\newblock {\em Asymptotic differential algebra and model theory of
  transseries}.
\newblock Princeton University Press, 2017.

\bibitem[B{\'a}r82]{barany1982generalization}
Imre B{\'a}r{\'a}ny.
\newblock A generalization of {C}arath{\'e}odory's theorem.
\newblock {\em Discrete Mathematics}, 40(2-3):141--152, 1982.

\bibitem[BF84]{boros1984number}
Endre Boros and Zolt{\'a}n F{\"u}redi.
\newblock The number of triangles covering the center of an n-set.
\newblock {\em Geometriae Dedicata}, 17(1):69--77, 1984.

\bibitem[BK22]{barany2021helly}
Imre B{\'a}r{\'a}ny and Gil Kalai.
\newblock Helly-type problems.
\newblock {\em Bulletin of the American Mathematical Society}, 59(4):471--502,
  2022.

\bibitem[CS13]{chernikov2013externally}
Artem Chernikov and Pierre Simon.
\newblock Externally definable sets and dependent pairs.
\newblock {\em Israel Journal of Mathematics}, 194(1):409--425, 2013.

\bibitem[Dar17]{darniere2017polytopes}
Luck Darniere.
\newblock Polytopes and simplexes in p-adic fields.
\newblock {\em Annals of Pure and Applied Logic}, 168(6):1284--1307, 2017.

\bibitem[Dar19]{darniere2019semi}
Luck Darni{\`e}re.
\newblock Semi-algebraic triangulation over p-adically closed fields.
\newblock {\em Proceedings of the London Mathematical Society},
  118(6):1501--1546, 2019.

\bibitem[DLGMM19]{de2019discrete}
Jes{\'u}s De~Loera, Xavier Goaoc, Fr{\'e}d{\'e}ric Meunier, and Nabil Mustafa.
\newblock The discrete yet ubiquitous theorems of {C}arath{\'e}odory, {H}elly,
  {S}perner, {T}ucker, and {T}verberg.
\newblock {\em Bulletin of the American Mathematical Society}, 56(3):415--511,
  2019.

\bibitem[DY07]{develin2007tropical}
Mike Develin and Josephine Yu.
\newblock Tropical polytopes and cellular resolutions.
\newblock {\em Experimental Mathematics}, 16(3):277--291, 2007.

\bibitem[Eck85]{eckhoff1985upper}
J{\"u}rgen Eckhoff.
\newblock An upper-bound theorem for families of convex sets.
\newblock {\em Geometriae Dedicata}, 19(2):217--227, 1985.

\bibitem[Fuc75]{fuchs1975vector}
L{\'a}szl{\'o} Fuchs.
\newblock Vector spaces with valuations.
\newblock {\em Journal of Algebra}, 35(1-3):23--38, 1975.

\bibitem[Gui16]{guilloux2016yet}
Antonin Guilloux.
\newblock Yet another $ p $-adic hyperbolic disc: Hilbert distance for $ p
  $-adic fields.
\newblock {\em Groups, Geometry, and Dynamics}, 10(1):9--43, 2016.

\bibitem[HL21]{holmsen2021radon}
Andreas~F Holmsen and Donggyu Lee.
\newblock Radon numbers and the fractional helly theorem.
\newblock {\em Israel Journal of Mathematics}, 241(1):433--447, 2021.

\bibitem[Hru14]{hrushovski2014imaginaries}
Ehud Hrushovski.
\newblock Imaginaries and definable types in algebraically closed valued
  fields.
\newblock {\em Valuation Theory in Interaction}, pages 297--319, 2014.

\bibitem[Joh16]{johnson2016fun}
William~Andrew Johnson.
\newblock {\em Fun with fields}.
\newblock PhD thesis, UC Berkeley, 2016.

\bibitem[Kal84]{kalai1984intersection}
Gil Kalai.
\newblock Intersection patterns of convex sets.
\newblock {\em Israel Journal of Mathematics}, 48(2-3):161--174, 1984.

\bibitem[Mat02]{matousek2013lectures}
Ji\v{r}\'{\i} Matou\v{s}ek.
\newblock {\em Lectures on discrete geometry}, volume 212 of {\em Graduate
  Texts in Mathematics}.
\newblock Springer-Verlag, New York, 2002.

\bibitem[Mat04]{matousek2004bounded}
Ji\v{r}\'i Matou\v{s}ek.
\newblock Bounded {VC}-dimension implies a fractional {H}elly theorem.
\newblock {\em Discrete \& Computational Geometry}, 31(2):251--255, 2004.

\bibitem[Mon46]{Monna}
A.~F. Monna.
\newblock Sur les espaces lin\'{e}aires norm\'{e}s. {I}, {II}, {III}, {IV}.
\newblock {\em Nederl. Akad. Wetensch., Proc.}, 49, 1946.

\bibitem[MY19]{moran2019weak}
Shay Moran and Amir Yehudayoff.
\newblock On weak $\epsilon$-nets and the {R}adon number.
\newblock In {\em 35th International Symposium on Computational Geometry (SoCG
  2019)}. Schloss Dagstuhl-Leibniz-Zentrum fuer Informatik, 2019.

\bibitem[PGS10]{perez2010locally}
C.~P{\'e}rez-Garci{\'a} and Wilhelmus~Hendricus Schikhof.
\newblock {\em Locally convex spaces over non-Archimedean valued fields}.
\newblock Cambridge Univ. Press, 2010.

\bibitem[Sch50]{schilling1950theory}
Otto Franz~Georg Schilling.
\newblock {\em The theory of valuations}, volume~4.
\newblock American Mathematical Soc., 1950.

\bibitem[Sch13]{schneider2013nonarchimedean}
Peter Schneider.
\newblock {\em Nonarchimedean functional analysis}.
\newblock Springer Science \& Business Media, 2013.

\bibitem[She09]{shelah2009dependent}
Saharon Shelah.
\newblock Dependent first order theories, continued.
\newblock {\em Israel Journal of Mathematics}, 173(1):1, 2009.

\bibitem[vdD14]{van2014lectures}
Lou van~den Dries.
\newblock Lectures on the model theory of valued fields.
\newblock In {\em Model theory in algebra, analysis and arithmetic}, pages
  55--157. Springer, 2014.

\bibitem[vDV93]{van1993theory}
M.L.J. van De~Vel.
\newblock {\em Theory of convex structures}.
\newblock Elsevier, 1993.

\end{thebibliography}
\end{document}